







\documentclass[twocolumn]{autart}    

%

\usepackage{amsmath,amsfonts,stmaryrd,amssymb} 
\usepackage{enumerate} 
\usepackage{enumitem} 
\usepackage{multirow}
\usepackage[justification=centering]{caption}
\usepackage{booktabs}       


%


\usepackage{cite}
\usepackage{natbib}




\usepackage{amsmath}
\newtheorem{asmp}[thm]{Assumption}


%





\usepackage{xcolor}


\usepackage{graphicx}
\usepackage{subfigure}


\newcommand{\inner}[2]{\left\langle #1, #2 \right\rangle}


\begin{document}

\begin{frontmatter}

\title{A New One-Point Residual-Feedback Oracle For \\ Black-Box Learning and Control} 

\author[Paestum]{Yan Zhang*}\ead{yan.zhang2@duke.edu},    
\author[Rome]{Yi Zhou*}\ead{yi.zhou@utah.edu},               
\author[Baiae]{Kaiyi Ji}\ead{ji.367@osu.edu},  
\author[Paestum]{Michael M. Zavlanos}\ead{michael.zavlanos@duke.edu}

\address[Paestum]{Mechanical Enginerring and Material Science, Duke University, Durham, NC 27708 USA}  
\address[Rome]{Electrical and Computer Engineering, The University of Utah, Salt Lake City, UT 84112 USA}             
\address[Baiae]{Electrical and Computer Engineering, The Ohio State University, Columbus, OH 43210 USA}        

\thanks[footnoteinfo]{*Equal Contribution. This work is supported in part by AFOSR under award \#FA9550-19-1-0169 and by NSF under award CNS-1932011.}

\begin{abstract}                          
Zeroth-order optimization (ZO) algorithms have been recently used to solve black-box or simulation-based learning and control problems, where the gradient of the objective function cannot be easily computed but can be approximated using the objective function values. 
Many existing ZO algorithms  adopt two-point feedback schemes due to their fast convergence rate compared to one-point feedback schemes. However, two-point schemes require two evaluations of the objective function at each iteration, which can be impractical in applications where the data are not all available a priori, e.g., in online optimization. In this paper, we propose a novel one-point feedback scheme that queries the function value once at each iteration and estimates the gradient using the residual between two consecutive points. When optimizing a deterministic Lipschitz function, we show that the query complexity of ZO with the proposed one-point residual feedback matches that of ZO with the existing two-point schemes. Moreover, the query complexity of the proposed algorithm can be improved when the objective function has Lipschitz gradient. Then, for stochastic bandit optimization problems where only noisy objective function values are given, we show that ZO with one-point residual feedback achieves the same convergence rate as that of two-point scheme with uncontrollable data samples. We demonstrate the effectiveness of the proposed one-point residual feedback via extensive numerical experiments.
\end{abstract}

\begin{keyword}                           
	Zeroth-Order Optimization, Residual-Feedback
\end{keyword}                             

\end{frontmatter}

\section{Introduction}\label{sec:Intro}
 Zeroth-order optimization algorithms have been widely-used to solve control and machine learning problems where first or second order information (i.e., gradient or Hessian information) is unavailable, e.g., controlling complex systems whose dynamics can not be modeled explicitly but can only be given by high-fidelity simulators \cite{ghadimi2013stochastic}, adversarial training \cite{chen2017zoo}, reinforcement learning \cite{fazel2018global,malik2018derivative} and human-in-the-loop control \cite{luo2020socially}. In these problems, the goal is to solve the following generic optimization problem
 \begin{equation*}
 	\min_{x\in \mathbb{R}^d} f(x), \tag{P}
 \end{equation*}
 where $x \in \mathbb{R}^d$ corresponds to the parameters and $f$ denotes the total loss. Using zeroth-order information, i.e., function evaluations, first-order gradients can be estimated to solve the problem (P). 
 
\begin{table*}[t]
	\caption{Iteration Complexity of Zeroth-order Methods with One-point, Two-point and Proposed Feedback Schemes}
	\label{table:Complexity}
	\setlength\tabcolsep{-0pt} 
		\begin{tabular*}{\textwidth}{@{\extracolsep{\fill}}cccccc}
			\toprule
				Complexity \footnotemark[3] & & Convex $C^{0,0}$ & Convex $C^{1,1}$ &  Nonconvex $C^{0,0}$ & Nonconvex $C^{1,1}$ \\
				\midrule
				\multirow{1}{*}{One-point} & \cite{gasnikov2017stochastic} & $d^2\epsilon^{-4}$ & $d^2\epsilon^{-3}$   & -- & -- \\
				\midrule
				\multirow{4}{*}{Two-point} & \cite{duchi2015optimal} & $d \log(d)\epsilon^{-2}$ & $d \epsilon^{-2} $ & -- & -- \\
				& \cite{shamir2017optimal} & $d\epsilon^{-2}$ & -- & -- & --  \\
				& \cite{nesterov2017random} & $d^2 \epsilon^{-2}$ & $d \epsilon^{-1}$ & $d^3 \epsilon_f^{-1} \epsilon^{-2}$ & $d\epsilon^{-1}$ \\
				& \cite{bach2016highly} & -- & $d^2 \epsilon^{-3}$ (UN) & -- & -- \\
				\midrule
				\multirow{2}{*}{Residual One-point} & Deterministic & $d^2 \epsilon^{-2}$ & $d^{3}\epsilon^{-1.5}$ & $d^4\epsilon_f^{-1} \epsilon^{-2}$ & $d^3 \epsilon^{-1.5}$ \\
				& Stochastic & $d^2 \epsilon^{-4}$ &  $d^2 \epsilon^{-3}$ & $d^3 \epsilon_f^{-3} \epsilon^{-2}$ & $d^4 \epsilon^{-3}$ \\
			\bottomrule
		\end{tabular*}
\end{table*}

Existing zeroth-order optimization (ZO) algorithms
can be divided into two categories, namely, ZO with one-point feedback and ZO with two-point feedback. \cite{flaxman2005online} was among the first to propose a ZO algorithm with one-point feedback, that queries one function value at each iteration to estimate the gradient. The corresponding one-point gradient estimator $\widetilde{\nabla} f(x)$ takes the form \footnote{In \cite{flaxman2005online}, the estimator is $\widetilde{\nabla} f(x) = \frac{du}{\delta} f(x + \delta u)$ where $x \in \mathbb{R}^d$ and $u$ is uniformly sampled from a unit sphere in $\mathbb{R}^d$. In this paper, we follow \cite{nesterov2017random} and sample $u$ from the standard normal distribution.}
\begin{equation}
\label{eqn:OnePoint}
\text{(One-point feedback):}~~\widetilde{\nabla} f(x) = \frac{u}{\delta} f(x + \delta u), 
\end{equation}
where $\delta$ is an exploration parameter and $u\in \mathbb{R}^d$ is sampled from the standard normal distribution element-wise. In particular, \cite{flaxman2005online} showed that the above one-point gradient estimator has a large estimation variance and the resulting ZO algorithm achieves a convergence rate of at most $\mathcal{O}(T^{-\frac{1}{4}})$, where $T$ is the number of iterations, which is much slower than that of gradient descent algorithms used to solve problem (P). Assuming smoothness and relying on self-concordant regularization, \cite{saha2011improved,dekel2015bandit} further improved this convergence speed. However, the gap in the iteration complexity between ZO algorithms with one-point feedback and gradient-based methods remained. In order to reduce the large estimation variance of the above one-point gradient estimator, \cite{agarwal2010optimal,nesterov2017random,shamir2017optimal} introduced the following two-point gradient estimators
\begin{align}
\label{eqn:TwoPoint}
\text{(Two-point} &~ \text{feedback):} \;\; \widetilde{\nabla}f(x) = \frac{u}{\delta} \big(f(x + \delta u) - f(x) \big), \nonumber \\
\text{ or } & \frac{u}{2\delta} \big(f(x + \delta u) - f(x - \delta u) \big), 
\end{align}
that have lower estimation variance and showed that ZO with these two-point feedbacks achieves a convergence rate of $\mathcal{O}(\frac{1}{\sqrt{T}})$ (or $\mathcal{O}(\frac{1}{T})$ when the problem is smooth), which is order-wise much faster than the convergence rate achieved by ZO algorithms with one-point feedback. Therefore, as also pointed out in \cite{larson2019derivative}, a fundamental question we seek to answer in this paper is:

\begin{itemize}[topsep=0pt, noitemsep, leftmargin=*]
	\item {\em (Q1): Does there exist a one-point feedback for which zeroth-order optimization can achieve the same query complexity as that of two-point feedback methods? }
\end{itemize}

\footnotetext[3]{In convex setting, the accuracy is meaured by $f(x) - f(x^\ast) \leq \epsilon$, 
where $x^\ast = \arg \min_{x \in \mathbb{R}^d} f(x)$, 
while in the non-convex setting, it is measured by $\|\nabla f(x)\|^2 \leq \epsilon$ when the objective function is smooth. When the objective function is non-smooth, we enforce two optimality measures, $|f(x) - f_\delta(x)| \leq \epsilon_f$ and $\|\nabla f_\delta(x)\|^2 \leq \epsilon$ together, 
where function $f_\delta(x)$ is a smoothed function defined as $f_\delta(x) := \mathbb{E}_{u\sim \mathcal{N}(0,1)}[f(x+\delta u)]$. 
(UN) means the oracle considers uncontrollable data samples. 
The notations $C^{0,0}$ and $C^{1,1}$ represent the function classes that are either Lipschitz, or have Lipschitz gradient. The detailed definition of these notations can be found in Definition~\ref{def:Lipschitz}.
}

The literature discussed above focuses on deterministic optimization problems (P). Nevertheless, in practice, many problems involve randomness in the environment and parameters, giving rise to the following stochastic optimization problem
\begin{align*}
	\min _{x\in \mathbb{R}^d} f(x) = \mathbb{E}_{\xi} [ F(x, \xi)], \tag{Q}
\end{align*}
where only a noisy function evaluation $F(x, \xi)$ with a random data sample $\xi$ is available. 
ZO algorithms have also been developed to solve the above problem (Q), e.g., \cite{ghadimi2013stochastic,duchi2015optimal,hu2016bandit,bach2016highly,gasnikov2017stochastic,akhavan2020exploiting}. In particular, \cite{ghadimi2013stochastic} consider the following widely-used stochastic two-point feedback
\begin{equation}
	\widetilde{\nabla} f(x) = \frac{u}{\delta} \big(F(x + \delta u, \xi) - F(x, \xi)\big) \label{eq: two_stochastic}
\end{equation}
and show that ZO with this stochastic two-point feedback has the same convergence rate as ZO with the two-point feedback scheme in \eqref{eqn:TwoPoint} for deterministic problems (P). Similarly, \cite{duchi2015optimal} further analyzed the oracle in \eqref{eq: two_stochastic} in a mirror descent framework and showed a similar convergence speed. Stochastic one-point and two-point feedback schemes with improved convergence rates have also been studied in \cite{gasnikov2017stochastic}. However, these stochastic two-point feedback schemes assume that the data sample $\xi$ is controllable, i.e., one can fix the data sample $\xi$ and evaluate the function value at two distinct points $x$ and $x+\delta u$. This assumption is unrealistic in many applications. For example, in reinforcement learning, controlling the sample $\xi$ requires applying the same sequence of noises to the dynamical system and reward function. Hence, two-point feedback schemes with fixed data samples can be impractical. To address this challenge, \cite{hu2016bandit,bach2016highly,akhavan2020exploiting} proposed a more practical noisy two-point feedback method that replaces the fixed sample $\xi$ in \eqref{eq: two_stochastic} with two independent samples $\xi, \xi'$. Its convergence rate was shown to match that of the stochastic one-point feedback $\widetilde{\nabla} f(x) = \frac{u}{\delta}F(x + \delta u, \xi)$. Still though, this two-point feedback method with independent data samples produces gradient estimates with lower variance compared to the conventional one-point feedback method. Therefore, an additional fundamental question we seek to answer in this paper is:
\begin{itemize}[topsep=0pt, noitemsep, leftmargin=*]
	\item {\em (Q2): Can we develop a stochastic one-point feedback that achieves the same practical performance as that of the noisy two-point feedback? }
\end{itemize}

{\bf Contributions:} In this paper, we provide positive answers to these open questions by introducing a new one-point residual feedback scheme and theoretically analyzing the convergence of zeroth-order optimization using this feedback scheme.  Specifically, our contributions are as follows.
We propose a new one-point feedback scheme which requires a single function evaluation at each iteration. This feedback scheme estimates the gradient using the residual between two consecutive feedback points and we refer to it as residual feedback.
We show that our residual feedback induces a smaller estimation variance than the one-point feedback~\eqref{eqn:OnePoint} considered in \cite{flaxman2005online,gasnikov2017stochastic}. Specifically, in deterministic optimization where the objective function is Lipschitz-continuous, we show that ZO with our residual feedback achieves the same convergence rate as existing ZO with two-point feedback schemes. To the best of our knowledge, this is the first one-point feedback scheme with provably comparable performance to two-point feedback schemes in ZO. Moreover, when the objective function has an additional smoothness structure, we further establish an improved convergence rate of ZO with residual feedback. In the stochastic case where only noisy function values are available, we show that the convergence rate of ZO with residual feedback matches the state-of-the-art result of ZO with two-point feedback under uncontrollable data samples. Hence, our residual feedback bridges the theoretical gap between ZO with one-point feedback and ZO with two-point feedback. A summary of the complexity results for the proposed residual-feedback scheme can be found in Table~\ref{table:Complexity}.

{\bf Applications in Learning and Control:} The proposed one-point residual-feedback oracle has important applications in a variety of learning and control problems where the gradients are unavailable or difficult to compute. For example, it can be used to reduce the number of black-box function evaluations, compared to the conventional one-point oracle, in optimal charging problems for electrical vechicles \cite{li2021surrogate}, extreme seeking problems for ABS control for automotive brakes \cite{poveda2021robust,nevsic2009extremum}. In addition, residual feedback can reduce the computational cost of ZO methods for distributed reinforcement learning problems, while maintaining a similar convergence rate as that achieved by two-point methods \cite{zhang2020cooperative}. This is because residual feedback, being a one-point method, requires only a single policy evaluation (generally an expensive calculation) at each iteration to estimate the policy gradient. Moreover, residual feedback can significantly improve the convergence speed of ZO algorithms for non-stationary reinforcement learning problems, as shown in \cite{zhang2020boosting}. Note that two-point methods can not be used for non-stationary reinforcement learning problems because they require two different policy evaluations in the same environment, which is not possible when the environment is non-stationary and changes after each policy evaluation. Compared to these works, here we focus on the iteration complexity of ZO methods with one-point residual feedback for static optimization problems, under different assumptions on the objective functions and their evaluation. This analysis, that is summarized in Table~\ref{table:Complexity}, lays the theoretical foundations of residual feedback and justifies its use for the more challenging learning and control problems discussed above.

\section{Preliminaries}
\label{sec:prelim}
In this section, we present definitions and preliminary results needed throughout our analysis.
Following \cite{nesterov2017random,bach2016highly}, we introduce the following classes of Lipschitz and smooth functions.

\begin{defn}[Lipschitz functions]
	\label{def:Lipschitz}
	The class of Lipschtiz-continuous functions $C^{0,0}$ satisfy: for any $f \in C^{0,0}$, $|f(x) - f(y)| \le L_0 \|x-y\|, ~\forall x,y\in \mathbb{R}^d$,
	for some Lipschitz parameter $L_0>0$. The class of smooth functions $C^{1,1}$ satisfy: for any $f \in C^{1,1}$, $\|\nabla f(x) - \nabla f(y)\| \le L_1\|x-y\|, ~\forall x,y\in \mathbb{R}^d,$
	for some Lipschitz parameter $L_1>0$.
\end{defn}


In ZO, the objective is to estimate the first-order gradient of a function using zeroth-order oracles. Necessarily, we need to perturb the function around the current point along all the directions uniformly in order to estimate the gradient. This motivates us to consider the Gaussian-smoothed version of the function $f$ as introduced in \cite{nesterov2017random}, $f_\delta(x) := \mathbb{E}_{u\sim \mathcal{N}(0,1)}[f(x+\delta u)]$,
where the coordinates of the vector $u$ are i.i.d standard Gaussian random variables. The following bounds on the approximation error of the function $f_\delta(x)$ have been developed in \cite{nesterov2017random}.
\begin{lem}[Gaussian approximation]
	\label{lem:GaussianApprox}
	Consider a function $f$ and its Gaussian-smoothed version $f_\delta$. It holds that 
	\begin{align*}
	& |f_\delta(x)-f(x)|\le
	\begin{cases}
	\delta L_0\sqrt{d}, ~\text{if}~f\in C^{0,0}, \\
	\delta^2 L_1 d, ~\text{if}~f\in C^{1,1},
	\end{cases} \nonumber \\
	& \text{and }
	\|\nabla f_\delta(x)- \nabla f(x)\|\le \delta L_1 (d+3)^{3/2}, ~\text{if}~ f\in C^{1,1}.
	\end{align*}
\end{lem}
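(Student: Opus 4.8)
The plan is to handle the three inequalities separately, dispatching the two function-value bounds directly from the definition $f_\delta(x)=\mathbb{E}_u[f(x+\delta u)]$ and reserving the real work for the gradient bound. For the $C^{0,0}$ case I would start from $|f_\delta(x)-f(x)|\le\mathbb{E}_u[|f(x+\delta u)-f(x)|]$ (Jensen), apply Lipschitz continuity to bound the integrand by $L_0\delta\|u\|$, and then control $\mathbb{E}_u[\|u\|]\le(\mathbb{E}_u[\|u\|^2])^{1/2}=\sqrt{d}$, giving $\delta L_0\sqrt{d}$. For the $C^{1,1}$ function-value bound I would instead use the standard consequence of smoothness $|f(y)-f(x)-\inner{\nabla f(x)}{y-x}|\le\tfrac{L_1}{2}\|y-x\|^2$ at $y=x+\delta u$; after taking expectations the linear term $\delta\inner{\nabla f(x)}{\mathbb{E}_u[u]}$ vanishes since $\mathbb{E}_u[u]=0$, leaving $|f_\delta(x)-f(x)|\le\tfrac{L_1\delta^2}{2}\mathbb{E}_u[\|u\|^2]=\tfrac{L_1\delta^2 d}{2}\le L_1\delta^2 d$.

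The crux is the gradient bound. First I would establish the integral representation $\nabla f_\delta(x)=\tfrac{1}{\delta}\mathbb{E}_u[f(x+\delta u)\,u]$, obtained by writing $f_\delta$ as a Gaussian convolution, changing variables so that $x$ appears only in the exponent, differentiating under the integral sign, and changing variables back; because $\mathbb{E}_u[u]=0$ I may subtract $f(x)$ freely to get $\nabla f_\delta(x)=\tfrac{1}{\delta}\mathbb{E}_u[(f(x+\delta u)-f(x))\,u]$. Next I would use the Gaussian identity $\mathbb{E}_u[uu^\top]=I$ to write the true gradient as $\nabla f(x)=\mathbb{E}_u[\inner{\nabla f(x)}{u}\,u]$, so the difference collapses into the single expectation $\nabla f_\delta(x)-\nabla f(x)=\mathbb{E}_u\big[\big(\tfrac{1}{\delta}(f(x+\delta u)-f(x))-\inner{\nabla f(x)}{u}\big)u\big]$. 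Applying the smoothness bound to the scalar factor gives a pointwise estimate $\le\tfrac{L_1\delta}{2}\|u\|^2$, and pulling the norm inside the expectation reduces the whole bound to the third moment $\mathbb{E}_u[\|u\|^3]$.

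The main obstacle is controlling this third moment, which is exactly where the dimension factor $(d+3)^{3/2}$ appears. Here I would invoke the Gaussian moment estimate $\mathbb{E}_u[\|u\|^p]\le(d+p)^{p/2}$ for $p\ge2$, which at $p=3$ gives $\mathbb{E}_u[\|u\|^3]\le(d+3)^{3/2}$ and hence $\|\nabla f_\delta(x)-\nabla f(x)\|\le\tfrac{L_1\delta}{2}(d+3)^{3/2}\le\delta L_1(d+3)^{3/2}$. The two technical points that deserve care are the rigorous justification of differentiating under the integral to obtain the representation of $\nabla f_\delta$, and the moment estimate itself; both are standard but together they are precisely what produces the dimension dependence $\sqrt{d}$, $d$, and $(d+3)^{3/2}$ across the three bounds.
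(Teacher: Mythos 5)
Your proposal is correct, and it is essentially the standard argument from Nesterov and Spokoiny's paper, which is precisely the source this lemma is cited from --- the paper itself offers no proof of Lemma~\ref{lem:GaussianApprox}, stating only that these bounds ``have been developed in \cite{nesterov2017random}.'' Your three steps (Jensen plus $\mathbb{E}\|u\|\le\sqrt{d}$ for the $C^{0,0}$ case, the smoothness expansion with $\mathbb{E}[u]=0$ for the $C^{1,1}$ function-value case, and the representation $\nabla f_\delta(x)=\tfrac{1}{\delta}\mathbb{E}[(f(x+\delta u)-f(x))u]$ combined with $\mathbb{E}\|u\|^3\le(d+3)^{3/2}$ for the gradient case) reproduce that reference's proof faithfully, and in fact yield the sharper constant $\tfrac{1}{2}\delta L_1(d+3)^{3/2}$.
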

Moreover, the smoothed function $f_\delta(x)$ has the following nice geometrical property as proved in \cite{nesterov2017random}. 

\begin{lem}
	\label{lem:SmoothedFunctionLipschitiz}
	If function $f\in C^{0,0}$ is $L_0$-Lipschitz, then its Gaussian-smoothed version $f_\delta$ belongs to $C^{1,1}$ with Lipschitz constant $L_1 = \sqrt{d}\delta^{-1}L_0$.
\end{lem}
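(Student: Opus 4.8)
The plan is to write $f_\delta$ as a convolution of $f$ with the Gaussian density, differentiate this convolution under the integral sign to obtain an explicit formula for $\nabla f_\delta$, and then bound the difference of gradients at two points directly via the Lipschitz continuity of $f$.

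First, I would make the change of variables $y = x + \delta u$ in the defining integral
\begin{equation*}
f_\delta(x) = \frac{1}{(2\pi)^{d/2}} \int_{\mathbb{R}^d} f(x + \delta u)\, e^{-\|u\|^2/2}\, du = \frac{1}{(2\pi)^{d/2}\delta^d} \int_{\mathbb{R}^d} f(y)\, e^{-\|y-x\|^2/(2\delta^2)}\, dy,
\end{equation*}
which exhibits $f_\delta$ as the convolution of $f$ with a smooth, rapidly decaying kernel. Because $f\in C^{0,0}$ grows at most linearly while the Gaussian kernel and its derivatives decay super-polynomially, differentiation under the integral sign is justified, so $f_\delta$ is differentiable and
\begin{equation*}
\nabla f_\delta(x) = \frac{1}{(2\pi)^{d/2}\delta^d} \int_{\mathbb{R}^d} f(y)\, \frac{y-x}{\delta^2}\, e^{-\|y-x\|^2/(2\delta^2)}\, dy = \frac{1}{\delta}\, \mathbb{E}_{u\sim\mathcal{N}(0,I)}\big[ f(x+\delta u)\, u \big],
\end{equation*}
where the last equality follows by reversing the change of variables. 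Note that the smoothing regularizes the possibly non-differentiable $f$ into a $C^1$ function, so the claimed $C^{1,1}$ membership already has its $C^1$ part established here.

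With this closed form in hand, for any $x, x'\in\mathbb{R}^d$ I would subtract the two gradient expressions under a common expectation,
\begin{equation*}
\nabla f_\delta(x) - \nabla f_\delta(x') = \frac{1}{\delta}\, \mathbb{E}_{u}\big[ \big(f(x+\delta u) - f(x'+\delta u)\big)\, u \big],
\end{equation*}
then take norms and move the norm inside the expectation. Applying the $L_0$-Lipschitz bound $|f(x+\delta u) - f(x'+\delta u)| \le L_0\|x - x'\|$ pointwise in $u$ yields
\begin{equation*}
\|\nabla f_\delta(x) - \nabla f_\delta(x')\| \le \frac{L_0\|x-x'\|}{\delta}\, \mathbb{E}_{u}\big[\|u\|\big].
\end{equation*}
Finally, since $\mathbb{E}_u[\|u\|] \le (\mathbb{E}_u[\|u\|^2])^{1/2} = \sqrt{d}$ by Jensen's inequality (the coordinates of $u$ being i.i.d.\ standard Gaussians with $\mathbb{E}_u[\|u\|^2]=d$), I obtain the Lipschitz estimate with constant $L_1 = \sqrt{d}\,\delta^{-1}L_0$, as claimed.

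I expect the main obstacle to be the rigorous justification of differentiating under the integral sign, i.e.\ establishing the explicit identity $\nabla f_\delta(x) = \delta^{-1}\mathbb{E}_u[f(x+\delta u)u]$; once this is secured, the remaining steps are a short chain of triangle and Jensen inequalities. The technical crux is that the kernel's fast decay dominates the at-most-linear growth of the Lipschitz function $f$, so the interchange of $\nabla$ and $\mathbb{E}$ is valid even though $f$ itself need not be differentiable.
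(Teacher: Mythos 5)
Your proof is correct, and it coincides with the standard argument: the paper itself does not prove this lemma but cites it from Nesterov and Spokoiny (2017), whose Lemma 2 is established by exactly your route — the representation $\nabla f_\delta(x) = \delta^{-1}\mathbb{E}_u[f(x+\delta u)\,u]$, taking the difference at two points inside the expectation, applying the Lipschitz bound on $f$ pointwise, and using $\mathbb{E}_u[\|u\|] \le \sqrt{d}$. Nothing further is needed.
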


We also introduce the following notions of convexity.
\begin{defn}[Convexity]
	A continuously differentiable function $f: \mathbb{R}^d \to \mathbb{R}$ is called convex if for all $x,y \in \mathbb{R}^d$, $	f(x) \ge f(y) + \inner{x-y}{\nabla f(y)}.$
\end{defn}

\section{Deterministic ZO with Residual Feedback}
\label{sec:det}
In this section, we consider the problem (P), 
where the objective function evaluation is fully deterministic. To solve this problem, we propose a zeroth-order estimate of the gradient based on the following {\it one-point residual feedback} scheme
\begin{align}
\widetilde{g}(x_t) := \frac{u_t}{\delta}\big(f(x_t + \delta u_t) - f(x_{t-1} + \delta u_{t-1})\big), \footnotemark[4] \label{eqn:GradientEstimate_Noiseless}
\end{align}
where $u_{t-1}$ and $u_t$ are independent random vectors sampled from the standard multivariate Gaussian distribution. To elaborate, the gradient estimate in \eqref{eqn:GradientEstimate_Noiseless} evaluates the function value at one perturbed point $x_t + \delta u_t$ at each iteration $t$ and the other function value evaluation $f(x_{t-1} + \delta u_{t-1})$ is inherited from the previous iteration. 
Therefore, it is a one-point feedback scheme based on the residual between two consecutive feedback points, and we name it {\it one-point residual feedback}. 
Next, we show that this estimator is an unbiased gradient estimate of the smoothed function $f_\delta(x)$ at $x_t$.
\begin{lem}
	\label{lem:UnbiasedEstimate_Noiseless}
	We have $\mathbb{E}\big[\tilde{g}(x_t)\big] = \nabla f_\delta(x_t)$ for all $x_t \in  \mathbb{R}^d$.
\end{lem}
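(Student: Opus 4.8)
The plan is to exploit the independence between the fresh perturbation $u_t$ and the ``inherited'' feedback point $f(x_{t-1}+\delta u_{t-1})$, which will cause the residual term to contribute nothing in expectation, leaving exactly the standard one-point Gaussian-smoothing identity. Throughout I read the claim in the natural conditional sense: letting $\mathcal{F}_{t-1}$ denote the $\sigma$-algebra generated by all randomness used through iteration $t-1$ (i.e.\ by $u_0,\dots,u_{t-1}$), the iterates $x_t,x_{t-1}$ and the perturbation $u_{t-1}$ are all $\mathcal{F}_{t-1}$-measurable, while $u_t\sim\mathcal{N}(0,I)$ is drawn independently of $\mathcal{F}_{t-1}$. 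I will show $\mathbb{E}[\widetilde g(x_t)\mid \mathcal{F}_{t-1}] = \nabla f_\delta(x_t)$, which is the meaning of the stated identity once $x_t$ is regarded as determined by the history.

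First I would split the estimator by linearity of (conditional) expectation,
\[
\mathbb{E}\big[\widetilde g(x_t)\mid\mathcal{F}_{t-1}\big]
= \frac{1}{\delta}\,\mathbb{E}\big[u_t\, f(x_t+\delta u_t)\mid\mathcal{F}_{t-1}\big]
- \frac{1}{\delta}\,\mathbb{E}\big[u_t\, f(x_{t-1}+\delta u_{t-1})\mid\mathcal{F}_{t-1}\big].
\]
For the second (residual) term, the factor $f(x_{t-1}+\delta u_{t-1})$ is $\mathcal{F}_{t-1}$-measurable, hence a constant under the conditioning, while $u_t$ remains a standard Gaussian independent of $\mathcal{F}_{t-1}$. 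The conditional expectation therefore factors as $f(x_{t-1}+\delta u_{t-1})\,\mathbb{E}[u_t\mid\mathcal{F}_{t-1}] = f(x_{t-1}+\delta u_{t-1})\,\mathbb{E}[u_t] = 0$, since the standard Gaussian has zero mean. This is the conceptual crux of why residual feedback stays unbiased, and it is immediate once the filtration is set up correctly.

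For the first term, $x_t$ is fixed under the conditioning and $u_t$ is a fresh standard Gaussian, so I would invoke the classical smoothing identity $\tfrac{1}{\delta}\,\mathbb{E}_{u}[u\, f(x+\delta u)] = \nabla f_\delta(x)$ of \cite{nesterov2017random}. For completeness I would re-derive it: write $f_\delta(x)=(2\pi)^{-d/2}\int f(x+\delta u)e^{-\|u\|^2/2}\,du$, substitute $v=x+\delta u$ to move the dependence on $x$ into the exponent, differentiate under the integral sign using $\nabla_x e^{-\|v-x\|^2/(2\delta^2)} = \tfrac{v-x}{\delta^2}e^{-\|v-x\|^2/(2\delta^2)}$, and substitute back; the score $\tfrac{v-x}{\delta^2}$ becomes precisely the factor $u/\delta$. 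Applying this at $x=x_t$ gives $\tfrac{1}{\delta}\,\mathbb{E}[u_t f(x_t+\delta u_t)\mid\mathcal{F}_{t-1}] = \nabla f_\delta(x_t)$, and combining with the vanishing residual term yields the claim.

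The step requiring the most care is the interchange of differentiation and integration in the smoothing identity. For $f\in C^{0,0}$ the integrand has at most linear growth, which is dominated by the rapidly decaying Gaussian density with all its derivatives, so differentiating under the integral sign is justified by a standard dominated-convergence argument; I would note this domination explicitly rather than leave it implicit. Everything else is a routine consequence of independence and linearity.
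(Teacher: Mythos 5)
Your proof is correct and follows essentially the same route as the paper's: split $\widetilde g(x_t)$ by linearity, use that $u_t$ is zero-mean and independent of $(x_{t-1},u_{t-1})$ (hence of $x_t$'s history) to annihilate the residual term, and apply Nesterov's one-point Gaussian smoothing identity $\frac{1}{\delta}\mathbb{E}_u[u\,f(x+\delta u)]=\nabla f_\delta(x)$ to what remains. Your explicit filtration setup and the re-derivation of the smoothing identity with the dominated-convergence justification are refinements of rigor rather than a different argument — indeed, the conditional reading $\mathbb{E}[\widetilde g(x_t)\mid\mathcal{F}_{t-1}]=\nabla f_\delta(x_t)$ is precisely how the lemma is invoked in the paper's subsequent theorem proofs.
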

\begin{pf}
	The proof is straightforward because $u_t$ is independent from $u_{t-1}$ and has zero mean.  \qed
\end{pf}

\footnotetext[4]{At time $t = 0$, we can query the objective function $f$ at $x_0 + \delta u_0$ and update $x_0$ using the conventional one-point oracle~\eqref{eqn:OnePoint}. Then, starting from time $t=1$, we can update using estimator~\eqref{eqn:GradientEstimate_Noiseless}.}

Since $\tilde{g}(x_t)$ is an unbiased estimate of $\nabla f_\delta(x_t)$, we can use it in Stochastic Gradient Descent (SGD) as follows
\begin{equation}
\label{eqn:SGD}
x_{t+1} = x_t - \eta \tilde{g} (x_t),
\end{equation}
where $\eta$ is the stepsize. To analyze the convergence of the above ZO algorithm with residual feedback, we need to bound the variance of the gradient estimate under proper choices of the exploration parameter $\delta$ in \eqref{eqn:GradientEstimate_Noiseless} and the stepsize $\eta$. In the following result, we present the bounds on the second moment of the gradient estimate $\mathbb{E}[\| \tilde{g}(x_t)\|^2]$, which will be used in our analysis later.

\begin{lem}
	\label{lem:BoundSecondMoment_Det}
	Consider a function $f \in C^{0,0}$ with Lipschitz constant $L_0$. Then, under the SGD update rule in~\eqref{eqn:SGD}, the second moment of the residual feedback satisfies
	\begin{equation*}
	\mathbb{E}[\|\tilde{g} (x_t)\|^2] \leq \frac{2 d L_0^2 \eta^2}{\delta^2} \mathbb{E}[ \|\tilde{g}(x_{t-1})\|^2] + 8L_0^2 (d+4)^2.
	\end{equation*}
	Furthermore, if $f(x)$ also belongs to $C^{1,1}$ with constant $L_1$, then the second moment of the residual feedback satisfies
	\begin{align}
	\mathbb{E}[\|& \tilde{g} (x_t)\|^2] \leq  \; \frac{2 d L_0^2 \eta^2}{\delta^2} \mathbb{E}[ \|\tilde{g}(x_{t-1})\|^2] \nonumber \\
	& + 8(d+4)^2 \|\nabla f(x_{t-1})\|^2  + 4L_1^2 (d+6)^3 \delta^2.
	\end{align}
\end{lem}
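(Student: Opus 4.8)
The plan is to control the residual $f(x_t+\delta u_t)-f(x_{t-1}+\delta u_{t-1})$ by inserting the intermediate point $x_{t-1}+\delta u_t$ and writing the difference as $A+B$, where $A:=f(x_t+\delta u_t)-f(x_{t-1}+\delta u_t)$ isolates the change in the \emph{base point} and $B:=f(x_{t-1}+\delta u_t)-f(x_{t-1}+\delta u_{t-1})$ isolates the change in the \emph{perturbation direction} at the fixed base point $x_{t-1}$. Starting from $\norm{\widetilde{g}(x_t)}^2=\frac{\norm{u_t}^2}{\delta^2}(A+B)^2\le\frac{\norm{u_t}^2}{\delta^2}(2A^2+2B^2)$, the two cases differ only in how $A$ and $B$ are bounded. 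Throughout I will exploit the two independence facts implied by the update rule~\eqref{eqn:SGD}: since $x_{t-1}$ is a deterministic function of $u_0,\dots,u_{t-2}$, the fresh sample $u_t$ is independent of $(u_{t-1},x_{t-1},\widetilde{g}(x_{t-1}),\nabla f(x_{t-1}))$, and $\nabla f(x_{t-1})$ is independent of $u_{t-1}$.

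For the $C^{0,0}$ bound I would use the Lipschitz property twice: $|A|\le L_0\norm{x_t-x_{t-1}}=L_0\eta\norm{\widetilde{g}(x_{t-1})}$ by~\eqref{eqn:SGD}, and $|B|\le L_0\delta\norm{u_t-u_{t-1}}$. Substituting gives $\norm{\widetilde{g}(x_t)}^2\le\frac{2L_0^2\eta^2}{\delta^2}\norm{u_t}^2\norm{\widetilde{g}(x_{t-1})}^2+2L_0^2\norm{u_t}^2\norm{u_t-u_{t-1}}^2$. Taking expectations, the first term factorizes by independence of $u_t$ from $\widetilde{g}(x_{t-1})$ and uses $\mathbb{E}\norm{u_t}^2=d$ to produce the recursive term $\tfrac{2dL_0^2\eta^2}{\delta^2}\mathbb{E}\norm{\widetilde{g}(x_{t-1})}^2$; the second term is handled by $\norm{u_t-u_{t-1}}^2\le 2\norm{u_t}^2+2\norm{u_{t-1}}^2$ together with the Gaussian moment bounds $\mathbb{E}\norm{u}^2=d$ and $\mathbb{E}\norm{u}^4\le(d+4)^2$ from~\cite{nesterov2017random}, yielding the additive constant $8L_0^2(d+4)^2$.

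For the $C^{1,1}$ refinement, $A$ is bounded exactly as before (keeping the $L_0$ recursion term), and the gain comes from expanding $B$ to first order around $x_{t-1}$. Writing $f(x_{t-1}+\delta v)=f(x_{t-1})+\delta\inner{\nabla f(x_{t-1})}{v}+r(v)$ with $|r(v)|\le\frac{L_1}{2}\delta^2\norm{v}^2$ (the standard $C^{1,1}$ estimate), the base-point values cancel and $B=\delta\inner{\nabla f(x_{t-1})}{u_t-u_{t-1}}+\big(r(u_t)-r(u_{t-1})\big)$. I then split $B^2$ into a linear part and a remainder part. The linear part contributes $4\norm{u_t}^2\inner{\nabla f(x_{t-1})}{u_t-u_{t-1}}^2$, which after Cauchy--Schwarz and the moment estimate for $\mathbb{E}[\norm{u_t}^2\norm{u_t-u_{t-1}}^2]$ (using that $\nabla f(x_{t-1})$ is independent of $u_t,u_{t-1}$) produces $8(d+4)^2\norm{\nabla f(x_{t-1})}^2$; the remainder part contributes $\tfrac{4\norm{u_t}^2}{\delta^2}\big(r(u_t)-r(u_{t-1})\big)^2\le 2L_1^2\delta^2\norm{u_t}^2\big(\norm{u_t}^4+\norm{u_{t-1}}^4\big)$, whose expectation is controlled by the sixth-order bound $\mathbb{E}\norm{u}^6\le(d+6)^3$ to give $4L_1^2(d+6)^3\delta^2$.

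The step deserving the most care is the bookkeeping of the probabilistic dependencies rather than any single inequality. The estimator $\widetilde{g}(x_t)$ is coupled to $\widetilde{g}(x_{t-1})$ through $x_t=x_{t-1}-\eta\widetilde{g}(x_{t-1})$, which is exactly what turns the bound into a recursion, and one must verify that the relevant cross terms either factorize or vanish. This hinges on the fact that $x_{t-1}$, and hence $\nabla f(x_{t-1})$, depends only on $u_0,\dots,u_{t-2}$, so that $u_t$ is fresh and $\nabla f(x_{t-1})$ is independent of $u_{t-1}$; once these are established, the remaining estimates are routine applications of $(a+b)^2\le 2a^2+2b^2$, Cauchy--Schwarz, and Nesterov's Gaussian moment bounds. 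The middle term should be read with an expectation, $\mathbb{E}\norm{\nabla f(x_{t-1})}^2$, once the full expectation is taken.
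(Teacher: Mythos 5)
Your proof has exactly the paper's skeleton: insert the intermediate point $x_{t-1}+\delta u_t$, bound the base-point difference $A$ via the $L_0$-Lipschitz property and the update rule to get the recursive term, and Taylor-expand the direction difference $B$ around $x_{t-1}$ in the $C^{1,1}$ case. Your $C^{0,0}$ bound and your remainder term $4L_1^2(d+6)^3\delta^2$ come out correctly and match the paper's computation, and you are also right that the middle term of the lemma should be read as $\mathbb{E}[\|\nabla f(x_{t-1})\|^2]$, as the paper's own proof confirms.

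However, your treatment of the linear term does not deliver the constant you claim. You bound $4\mathbb{E}\big[\|u_t\|^2\langle\nabla f(x_{t-1}),u_t-u_{t-1}\rangle^2\big]$ by Cauchy--Schwarz on the full inner product, i.e.\ by $4\,\mathbb{E}[\|\nabla f(x_{t-1})\|^2]\,\mathbb{E}\big[\|u_t\|^2\|u_t-u_{t-1}\|^2\big]$, and then use $\mathbb{E}\big[\|u_t\|^2\|u_t-u_{t-1}\|^2\big]\le 2(d+4)^2+2d^2$. This yields $8\big[(d+4)^2+d^2\big]\mathbb{E}[\|\nabla f(x_{t-1})\|^2]$, i.e.\ up to $16(d+4)^2\mathbb{E}[\|\nabla f(x_{t-1})\|^2]$, which is strictly larger than the $8(d+4)^2\mathbb{E}[\|\nabla f(x_{t-1})\|^2]$ asserted in the lemma. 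The paper avoids this loss by splitting first, $\langle\nabla f,u_t-u_{t-1}\rangle^2\le 2\langle\nabla f,u_t\rangle^2+2\langle\nabla f,u_{t-1}\rangle^2$, and then treating the two pieces asymmetrically: for the cross term, Cauchy--Schwarz plus independence gives $\mathbb{E}[\langle\nabla f,u_{t-1}\rangle^2\|u_t\|^2]\le d^2\,\mathbb{E}[\|\nabla f\|^2]$, while for the term in which $u_t$ appears both inside the inner product and in the factor $\|u_t\|^2$ it invokes the sharper Gaussian moment bound of Theorem 3 in \cite{nesterov2017random}, $\mathbb{E}[\langle g,u\rangle^2\|u\|^2]\le(d+4)\|g\|^2$, which is order $d$ rather than the $(d+4)^2$ that Cauchy--Schwarz would give. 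Then $2\big(d^2+d+4\big)\le 2(d+4)^2$, and multiplying by $4$ produces exactly $8(d+4)^2$. As written, your argument proves a correct but weaker inequality (constant $16$ in place of $8$), which would suffice for the downstream convergence rates up to constants; to obtain the lemma as stated you need this one additional idea.
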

The proof of above Lemma~\ref{lem:BoundSecondMoment_Det} can be found in Appendix~\ref{sec:BoundSecondMoment_Det}. Lemma~\ref{lem:BoundSecondMoment_Det} shows that the second moment of the residual feedback $\mathbb{E}[\|\tilde{g} (x_t)\|^2]$ can be bounded by a perturbed contraction under the SGD update rule. This perturbation term is crucial to establish the iteration complexity of ZO with our residual feedback. In particular, with the traditional one-point feedback, the perturbation term is in the order of $O({\delta^{-2}})$ and significantly degrades the convergence speed \cite{hu2016bandit}. In comparison, our residual feedback induces a much smaller perturbation term. Specifically, when $f\in C^{0,0}$, the perturbation is the order of $O(L_0^2d^2)$ that is independent of $\delta$, and when $f\in C^{1,1}$, the perturbation is in the order of $O(d^2\|\nabla f(x_{t-1})\|^2 +L_1^2d^3\delta^2)$. 
Therefore, ZO with our residual feedback can achieve a better iteration complexity than that of ZO with the traditional one-point feedback.
\vspace{-3pt}
\subsection{Convergence Analysis}
We first consider the case where the objective function $f$ is nonconvex. When $f$ is differentiable, we say a solution $x$  is $\epsilon$-accurate if $\mathbb{E}[\| \nabla f(x) \|^2 ]\le \epsilon$. However, when $f$ is nonsmooth, the gradient of the original objective function $\nabla f(x)$ does not exist. On the other hand, the smoothed objective function $f_\delta(x)$ is differentiable. Therefore, we find an $\epsilon$-accurate solution of the smoothed problem such that $\mathbb{E}[ \| \nabla f_\delta(x) \|^2] \leq \epsilon$. In the meantime, we require $f_\delta$ to be $\epsilon_f$-close to the original objective function $f$, which requires $\delta \leq \frac{\epsilon_f}{L_0 \sqrt{d}}$ according to Lemma~\ref{lem:GaussianApprox}. Similar optimality conditions have also been considered in \cite{nesterov2017random}.
Under this setup, the convergence rate of ZO with residual feedback is presented below. For simplicity, all the complexity results in this paper are presented in $\mathcal{O}$ notations. The proofs and the explicit form of the constant terms can be found in the supplementary material.

\begin{thm}
	\label{thm:Nonconvex_Noiseless}
	Assume that $f \in C^{0,0}$ with Lipschitz constant $L_0$ and that $f$ is also bounded below by $f^\ast$. Moreover, assume that SGD in~\eqref{eqn:SGD} with residual feedback is run for $T>1/\epsilon_f$ iterations and that $\tilde{x}$ is selected from the $T$ iterates uniformly at random. Let also $\eta = \frac{\sqrt{\epsilon_f}}{2d L_0^2 \sqrt{T}}$ and $\delta = \frac{\epsilon_f}{ L_0 d^{\frac{1}{2}}}$. Then, we have that $\mathbb{E}\big[ \|\nabla f_\delta(\tilde{x}) \|^2\big]  = \mathcal{O}(d^2 \epsilon_f^{-0.5} T^{-0.5})$.
\end{thm}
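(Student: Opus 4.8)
The plan is to run the standard potential-function (descent-lemma) argument, but applied to the Gaussian-smoothed surrogate $f_\delta$ rather than to $f$ itself, since $f$ is only assumed $C^{0,0}$ and need not be differentiable. By \cref{lem:SmoothedFunctionLipschitiz}, $f_\delta \in C^{1,1}$ with constant $L_1 = \sqrt{d}\,\delta^{-1}L_0$, so the descent inequality applies to $f_\delta$. First I would write, for the update \eqref{eqn:SGD},
\begin{equation*}
f_\delta(x_{t+1}) \le f_\delta(x_t) - \eta \inner{\nabla f_\delta(x_t)}{\tilde g(x_t)} + \frac{L_1\eta^2}{2}\|\tilde g(x_t)\|^2,
\end{equation*}
then take expectations and invoke the conditional unbiasedness from \cref{lem:UnbiasedEstimate_Noiseless} (which holds because $u_t$ is fresh and independent of $x_t$ and $u_{t-1}$) to replace $\mathbb{E}[\inner{\nabla f_\delta(x_t)}{\tilde g(x_t)}]$ by $\mathbb{E}[\|\nabla f_\delta(x_t)\|^2]$. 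This yields the one-step decrease
\begin{equation*}
\eta\, \mathbb{E}[\|\nabla f_\delta(x_t)\|^2] \le \mathbb{E}[f_\delta(x_t)] - \mathbb{E}[f_\delta(x_{t+1})] + \frac{L_1\eta^2}{2}\mathbb{E}[\|\tilde g(x_t)\|^2].
\end{equation*}

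Next I would telescope this inequality over $t=1,\dots,T$. The boundary terms collapse to $f_\delta(x_1) - \mathbb{E}[f_\delta(x_{T+1})]$, which I bound by $f(x_1) - f^\ast + 2\delta L_0\sqrt{d}$ using $f \ge f^\ast$ together with the function-approximation bound $|f_\delta - f| \le \delta L_0\sqrt d$ from \cref{lem:GaussianApprox}. Since $\tilde x$ is drawn uniformly from the iterates, dividing by $\eta T$ gives
\begin{equation*}
\mathbb{E}[\|\nabla f_\delta(\tilde x)\|^2] \le \frac{f(x_1)-f^\ast + 2\delta L_0\sqrt d}{\eta T} + \frac{L_1\eta}{2T}\sum_{t=1}^T \mathbb{E}[\|\tilde g(x_t)\|^2],
\end{equation*}
so the whole argument reduces to controlling the cumulative second moment $\sum_t \mathbb{E}[\|\tilde g(x_t)\|^2]$.

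This last step is where the residual feedback departs from two-point feedback and is the main obstacle: \cref{lem:BoundSecondMoment_Det} does not bound $\mathbb{E}[\|\tilde g(x_t)\|^2]$ outright, but only as a perturbed contraction coupling consecutive iterates. I would verify that with the prescribed stepsize and smoothing radius the contraction factor is
\begin{equation*}
\rho := \frac{2dL_0^2\eta^2}{\delta^2} = \frac{1}{2T\epsilon_f} < \frac{1}{2},
\end{equation*}
where the strict inequality uses $T > 1/\epsilon_f$. Unrolling the recursion $\mathbb{E}[\|\tilde g(x_t)\|^2] \le \rho\,\mathbb{E}[\|\tilde g(x_{t-1})\|^2] + 8L_0^2(d+4)^2$ and summing the resulting geometric series (with $1/(1-\rho) < 2$) then gives $\sum_{t=1}^T \mathbb{E}[\|\tilde g(x_t)\|^2] = \mathcal{O}(T L_0^2 d^2)$ up to a transient term from the bounded initial moment $\mathbb{E}[\|\tilde g(x_1)\|^2]$.

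Finally, I would substitute the parameter values, using $L_1 = \sqrt d\,\delta^{-1}L_0 = dL_0^2/\epsilon_f$, $\delta L_0\sqrt d = \epsilon_f$, and $\eta T = \tfrac{1}{2}d^{-1}L_0^{-2}\sqrt{\epsilon_f T}$. The first term becomes $\mathcal O(d\,\epsilon_f^{-1/2}T^{-1/2})$, the transient from the initial moment contributes $\mathcal O(d^2\epsilon_f^{-1/2}T^{-3/2})$, and the dominant contribution $L_1\eta\cdot\mathcal O(L_0^2 d^2) = \mathcal O(d^2\epsilon_f^{-1/2}T^{-1/2})$ comes from the perturbation term (the $1/(2T)$ cancelling against the $T$ in the bulk bound); collecting these yields $\mathbb{E}[\|\nabla f_\delta(\tilde x)\|^2] = \mathcal O(d^2\epsilon_f^{-0.5}T^{-0.5})$, as claimed. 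I expect the only delicate points to be the bookkeeping that makes the contraction factor exactly $\tfrac{1}{2T\epsilon_f}$ and the identification of $L_1\eta\cdot 8L_0^2(d+4)^2$ as the dominant term; everything else is routine.
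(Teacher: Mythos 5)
Your proposal is correct and follows essentially the same route as the paper's proof: the descent lemma applied to $f_\delta$ (using $L_1 = \sqrt{d}\,\delta^{-1}L_0$ from \cref{lem:SmoothedFunctionLipschitiz}), unbiasedness to extract $\mathbb{E}[\|\nabla f_\delta(x_t)\|^2]$, telescoping, and then unrolling the perturbed contraction of \cref{lem:BoundSecondMoment_Det} with the same factor $\alpha = \tfrac{1}{2T\epsilon_f} < \tfrac{1}{2}$ to bound the cumulative second moment, followed by the identical parameter substitution. The only cosmetic difference is that you bound the boundary terms via $f^\ast$ plus the Gaussian approximation error explicitly, whereas the paper introduces a lower bound $f_\delta^\ast$ of the smoothed function justified by the same observation.
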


The proof can be found in Appendix~\ref{sec:NonsmoothNonconvex_Det}. Based on the above convergence rate result, the required iteration complexity to achieve a point $x$ that satisfies $|f(x) - f_\delta(x)| \leq \epsilon_f$ as well as $\mathbb{E}[ \|\nabla f(\tilde{x})\|^2 ] \leq \epsilon$ is of the order $\mathcal{O}(\frac{d^{4}}{ \epsilon_f \epsilon^2})$. This complexity result is close to the complexity result $\mathcal{O}(\frac{d^{3}}{ \epsilon_f \epsilon^2})$ of ZO with two-point feedback in \cite{nesterov2017random}.
When $f(x) \in C^{1,1}$ is a smooth function, we obtain the following convergence rate result for ZO with residual feedback.
\begin{thm}
	\label{thm:Nonconvex_NoiselessSmooth}
	Assume that $f(x) \in C^{0,0}$ with Lipschitz constant $L_0$ and that $f(x) \in C^{1,1}$ with Lipschitz constant $L_1$. Moreover, assume that SGD in \eqref{eqn:SGD} with residual feedback is run for $T$ iterations and that $\tilde{x}$ is selected from the $T$ iterates uniformly at random.
	Let also $\eta = \frac{1} { \widetilde{L} (d+4)^2 T^{\frac{1}{3}} }$, and $\delta = \frac{1}{\sqrt{d} T^{\frac{1}{3}}}$, where $\widetilde{L} = \max(2L_0,$ $32L_1)$. Then, we have that $\mathbb{E}\big[ \|\nabla f(\tilde{x})\|^2\big] = \mathcal{O}(d^2 T^{-\frac{2}{3}})$.
\end{thm}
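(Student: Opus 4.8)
The plan is to combine the standard smooth descent inequality for $f$ with the perturbed contraction for the second moment of the residual feedback from Lemma~\ref{lem:BoundSecondMoment_Det}, and then to exploit the prescribed constants to decouple the two coupled recursions by an absorption argument.

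First I would apply the $L_1$-descent inequality to $f$ along the update~\eqref{eqn:SGD}: since $x_{t+1}-x_t=-\eta\tilde{g}(x_t)$,
\[
f(x_{t+1}) \le f(x_t) - \eta\inner{\nabla f(x_t)}{\tilde{g}(x_t)} + \tfrac{L_1\eta^2}{2}\|\tilde{g}(x_t)\|^2 .
\]
Conditioning on the history through $u_{t-1}$ (so $x_t,x_{t-1},u_{t-1}$ are fixed and $u_t$ is the only fresh randomness) and using Lemma~\ref{lem:UnbiasedEstimate_Noiseless}, the cross term becomes $\inner{\nabla f(x_t)}{\nabla f_\delta(x_t)}$. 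I would lower-bound it by splitting $\nabla f_\delta=\nabla f+(\nabla f_\delta-\nabla f)$, applying Young's inequality, and invoking the gradient approximation bound $\|\nabla f_\delta-\nabla f\|\le\delta L_1(d+3)^{3/2}$ from Lemma~\ref{lem:GaussianApprox}, giving $\inner{\nabla f(x_t)}{\nabla f_\delta(x_t)}\ge\tfrac12\|\nabla f(x_t)\|^2-\tfrac12\delta^2L_1^2(d+3)^3$. Taking total expectations yields a one-step bound containing the terms $-\tfrac{\eta}{2}\mathbb{E}\|\nabla f(x_t)\|^2$, $+\tfrac{\eta}{2}\delta^2L_1^2(d+3)^3$, and $+\tfrac{L_1\eta^2}{2}\mathbb{E}\|\tilde{g}(x_t)\|^2$.

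Next I would sum this over $t=1,\dots,T$ (telescoping $f$ and using a lower bound $f\ge f^\ast$) and, separately, sum the $C^{1,1}$ recursion of Lemma~\ref{lem:BoundSecondMoment_Det}. Writing $G_t:=\mathbb{E}\|\tilde{g}(x_t)\|^2$, $g_t:=\mathbb{E}\|\nabla f(x_t)\|^2$ and $\rho:=2dL_0^2\eta^2/\delta^2$, the summed recursion gives
\[
(1-\rho)\textstyle\sum_t G_t \le \rho G_0 + 8(d+4)^2\big(g_0+\textstyle\sum_t g_t\big) + 4L_1^2(d+6)^3\delta^2 T .
\]
The role of the prescribed parameters is now twofold. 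With $\eta=\widetilde{L}^{-1}(d+4)^{-2}T^{-1/3}$, $\delta=d^{-1/2}T^{-1/3}$ and $\widetilde{L}\ge2L_0$ one computes $\rho=2d^2L_0^2/\big(\widetilde{L}^2(d+4)^4\big)$, a constant strictly below $1$, so $\sum_t G_t$ is $O(1)\cdot\sum_t g_t$ plus lower-order terms; and with $\widetilde{L}\ge32L_1$ the quantity $8L_1\eta(d+4)^2=8L_1/(\widetilde{L}T^{1/3})\le\tfrac14T^{-1/3}$ is small. Substituting the $\sum_t G_t$ bound into the summed descent inequality, the coefficient of $\sum_t g_t$ on the right-hand side equals $\tfrac{L_1\eta^2}{2}\cdot\tfrac{8(d+4)^2}{1-\rho}=\tfrac{\eta}{2}\cdot\tfrac{8L_1\eta(d+4)^2}{1-\rho}\le\tfrac{\eta}{4}$, so it can be absorbed into the $-\tfrac{\eta}{2}\sum_t g_t$ on the left, leaving $\tfrac{\eta}{4}\sum_t g_t$.

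Finally I would divide by $\eta T/4$ to bound $\mathbb{E}\|\nabla f(\tilde{x})\|^2=\tfrac1T\sum_t g_t$ and check the orders: the optimization-error term $4(f(x_1)-f^\ast)/(\eta T)=O(d^2T^{-2/3})$ and the bias term $2\delta^2L_1^2(d+3)^3=O(d^2T^{-2/3})$ dominate, while the $G_0,g_0$ contributions scale like $O(T^{-4/3})$ and the residual smoothing term like $O(T^{-1})$, both negligible; this gives $\mathbb{E}\|\nabla f(\tilde{x})\|^2=O(d^2T^{-2/3})$. I expect the main obstacle to be exactly this absorption step: one must verify that the single constant $\widetilde{L}=\max(2L_0,32L_1)$ simultaneously keeps $\rho<1$ and forces the cross-coefficient of $\sum_t g_t$ strictly below $\tfrac{\eta}{2}$, since it is precisely this balance between the smoothness constant, the step size, and the smoothing radius that decouples the two recursions and avoids the $\delta^{-2}$ blow-up that degrades classical one-point feedback.
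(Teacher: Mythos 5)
Your proof is correct and follows essentially the same route as the paper: telescope a smooth descent inequality, sum the $C^{1,1}$ recursion of Lemma~\ref{lem:BoundSecondMoment_Det}, and use $\widetilde{L}=\max(2L_0,32L_1)$ both to keep $\rho=2dL_0^2\eta^2/\delta^2\le\tfrac12$ and to absorb the $8(d+4)^2\sum_t\mathbb{E}\big[\|\nabla f(x_t)\|^2\big]$ term, exactly as in the paper's argument. The only immaterial difference is that you apply the descent lemma to $f$ and handle the smoothing bias inside the cross term via Young's inequality and Lemma~\ref{lem:GaussianApprox}, whereas the paper descends on $f_\delta$ and converts $\|\nabla f_\delta\|^2$ to $\|\nabla f\|^2$ afterwards; both pay the same $O(\delta^2L_1^2d^3)$ per-step bias and yield the same $\mathcal{O}(d^2T^{-2/3})$ rate.
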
 

The proof can be found in Appendix~\ref{sec:SmoothNonconvex_Det}. In particular, to achieve a point $x$ that satisfies $\mathbb{E}\big[ \|\nabla f(\tilde{x})\|^2\big] \leq \epsilon$, the required iteration complexity is of the order $\mathcal{O}(d^3\epsilon^{-\frac{3}{2}})$. To the best of our knowledge, the best complexity result for ZO with two-point feedback is of the order $\mathcal{O}(d\epsilon^{-1})$, which is established in \cite{nesterov2017random}.
Next, we consider the case where the objective function $f$ is convex. In this case, the optimality of a solution $x$ is measured via the loss gap $f(x) - f(x^\ast)$, where $x^\ast$ is the global optimum of $f$. 
\begin{thm}
	\label{thm:Convex_Noiseless}
	Assume that $f(x)\in C^{0,0}$ is convex with Lipschitz constant $L_0$. Moreover, assume that SGD in \eqref{eqn:SGD} with residual feedback is run for $T$ iterations and define the running average $\bar{x} = \frac{1}{T} \sum_{t=0}^{T-1} x_t$. Let also $\eta = \frac{1}{2 d L_0 \sqrt{T}}$ and $\delta = \frac{1}{\sqrt{T}}$.  Then, we have that $f(\bar{x})  - f(x^\ast) = \mathcal{O}(d T^{-0.5})$.

	Moreover, assume that additionally $f(x) \in C^{1,1}$ with Lipschitz constant   $L_1$, and let $\eta = \frac{1}{2 \tilde{L} (d+4)^2 T^{\frac{1}{3}}}$ and $\delta = \frac{\sqrt{d}}{T^{\frac{1}{3}}}$, where $\tilde{L} = \max\{L_0, 16L_1\}$. Then, we have that $f(\bar{x})  - f(x^\ast) = \mathcal{O}(d^2 T^{-\frac{2}{3}})$.
\end{thm}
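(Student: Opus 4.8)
The plan is to prove the convex case of Theorem~\ref{thm:Convex_Noiseless} in two parts, mirroring the structure of the two hypotheses ($C^{0,0}$ alone, then additionally $C^{1,1}$), and in each part to combine the standard SGD descent telescoping with the second-moment bound from Lemma~\ref{lem:BoundSecondMoment_Det}. First I would write down the one-step expansion of the squared distance to the optimum: from the update~\eqref{eqn:SGD}, $\mathbb{E}\|x_{t+1}-x^\ast\|^2 = \mathbb{E}\|x_t-x^\ast\|^2 - 2\eta\,\mathbb{E}\inner{\tilde g(x_t)}{x_t-x^\ast} + \eta^2\,\mathbb{E}\|\tilde g(x_t)\|^2$. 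Using Lemma~\ref{lem:UnbiasedEstimate_Noiseless}, the cross term has conditional expectation $\inner{\nabla f_\delta(x_t)}{x_t-x^\ast}$, and since $f\in C^{0,0}$ is convex its smoothing $f_\delta$ is convex as well (it is an average of convex functions), so $\inner{\nabla f_\delta(x_t)}{x_t-x^\ast} \ge f_\delta(x_t) - f_\delta(x^\ast)$. This converts the inner product into a loss-gap term on the smoothed function.

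Next I would rearrange into $2\eta\,\mathbb{E}[f_\delta(x_t)-f_\delta(x^\ast)] \le \mathbb{E}\|x_t-x^\ast\|^2 - \mathbb{E}\|x_{t+1}-x^\ast\|^2 + \eta^2\,\mathbb{E}\|\tilde g(x_t)\|^2$ and sum over $t=0,\dots,T-1$, so the distance terms telescope. The key technical step is to control $\sum_{t}\mathbb{E}\|\tilde g(x_t)\|^2$. Here I would invoke the recursion from Lemma~\ref{lem:BoundSecondMoment_Det}: in the $C^{0,0}$ case, $\mathbb{E}\|\tilde g(x_t)\|^2 \le \frac{2dL_0^2\eta^2}{\delta^2}\mathbb{E}\|\tilde g(x_{t-1})\|^2 + 8L_0^2(d+4)^2$. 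With the prescribed choices $\eta = \frac{1}{2dL_0\sqrt T}$ and $\delta = \frac{1}{\sqrt T}$, the contraction coefficient $\frac{2dL_0^2\eta^2}{\delta^2} = \frac{2dL_0^2}{\delta^2}\cdot\frac{1}{4d^2L_0^2 T} = \frac{1}{2d}$ is at most $\tfrac12$, so the recursion is a genuine contraction and each $\mathbb{E}\|\tilde g(x_t)\|^2$ is uniformly bounded by $O(L_0^2 d^2)$ (summing the geometric series). Hence $\sum_{t=0}^{T-1}\mathbb{E}\|\tilde g(x_t)\|^2 = O(L_0^2 d^2 T)$, and $\eta^2$ times this sum is $O(\frac{L_0^2 d^2 T}{d^2 L_0^2 T}) = O(1)$. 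Dividing the telescoped inequality by $2\eta T$ and using convexity of $f_\delta$ together with Jensen to pass to the running average $\bar x$ gives $\mathbb{E}[f_\delta(\bar x)-f_\delta(x^\ast)] = O(\frac{1}{\eta T}) = O(\frac{dL_0}{\sqrt T})$; finally I would add the approximation error $|f_\delta - f|$ from Lemma~\ref{lem:GaussianApprox}, which with $\delta = T^{-1/2}$ contributes $\delta L_0\sqrt d = O(\sqrt d\, T^{-1/2})$, a lower-order term, yielding the claimed $O(dT^{-1/2})$.

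For the smooth $C^{1,1}$ refinement the argument is structurally identical, except that I would use the second inequality of Lemma~\ref{lem:BoundSecondMoment_Det}, whose perturbation term is $8(d+4)^2\|\nabla f(x_{t-1})\|^2 + 4L_1^2(d+6)^3\delta^2$ rather than a flat $O(L_0^2 d^2)$. The presence of $\|\nabla f(x_{t-1})\|^2$ means the second-moment bound no longer closes on its own; instead I would need to also track the loss gap, bounding $\|\nabla f(x_{t-1})\|^2 \le 2L_1\,\big(f(x_{t-1})-f^\ast\big)$ by smoothness, and feed this back into the descent inequality so that the gradient-norm contribution is absorbed into the telescoping loss-gap terms (this is where the factor $(d+4)^2$ in $\eta$ and the constant $\tilde L = \max\{L_0,16L_1\}$ are calibrated so the absorbed coefficient stays below one). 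With $\eta = \frac{1}{2\tilde L(d+4)^2 T^{1/3}}$ and $\delta = \sqrt d\, T^{-1/3}$, the residual error rate $O(\frac{1}{\eta T}) = O(\frac{d^2}{T^{2/3}})$ dominates the smoothing error $\delta^2 L_1 d = O(d^2 T^{-2/3})$, giving the stated $O(d^2 T^{-2/3})$ bound.

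\emph{The main obstacle} I anticipate is the coupling in the smooth case between the second-moment recursion and the descent inequality: because $\mathbb{E}\|\tilde g(x_t)\|^2$ now depends on $\|\nabla f(x_{t-1})\|^2$, one cannot bound the gradient-estimate variance independently and then plug it in, as in the $C^{0,0}$ case. The two recursions must be summed and combined simultaneously, and the step size and exploration parameter must be chosen precisely so that, after substituting $\|\nabla f\|^2 \le 2L_1(f-f^\ast)$ and telescoping, the aggregate coefficient multiplying the summed loss gaps is strictly less than the coefficient supplied by the convexity descent term, leaving a net positive quantity to bound. Getting these constants to line up — rather than the asymptotic rate, which follows routinely once the recursion closes — is the delicate bookkeeping that carries the proof.
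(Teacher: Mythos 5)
Your proposal follows essentially the same route as the paper's own proof: the squared-distance expansion combined with unbiasedness and convexity of $f_\delta$, telescoping, the geometric-series bound on $\sum_t \mathbb{E}[\|\tilde g(x_t)\|^2]$ from Lemma~\ref{lem:BoundSecondMoment_Det} (with contraction factor $\alpha \le 1/2$ under the prescribed $\eta,\delta$), and, in the smooth case, absorbing the $\|\nabla f(x_{t-1})\|^2$ perturbation into the telescoped loss gaps via $\|\nabla f(x)\|^2 \le 2L_1\big(f(x)-f(x^\ast)\big)$ with $\tilde L = \max\{L_0, 16L_1\}$ calibrated exactly so the absorbed coefficient stays at most $1/2$. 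The only cosmetic difference is that you convert $f_\delta$ to $f$ once at the end using Lemma~\ref{lem:GaussianApprox}, whereas the paper performs this conversion inside each per-iteration inequality before telescoping; the two are equivalent.
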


The proof can be found in Appendix~\ref{sec:Convex_Det}. To elaborate, to achieve a solution $x$ that satisfies $f(\bar{x}) - f(x^\ast) \leq \epsilon$, the required iteration complexity is of the order $\mathcal{O}(d^2\epsilon^{-2})$ when $f\in C^{0,0}$. Such a complexity result significantly improves the complexity $\mathcal{O}(d^2\epsilon^{-4})$ of ZO with the traditional one-point feedback and is slightly worse than the best complexity $\mathcal{O}(d\epsilon^{-2})$ of ZO with two-point feedback. On the other hand, when $f(x) \in C^{1,1}$, the required iteration complexity of ZO with residual feedback further reduces to $\mathcal{O}(d^{3}\epsilon^{-1.5})$, which is better than the complexity $\mathcal{O}(d\epsilon^{-3})$ of ZO with the traditional one-point feedback whenever $\epsilon< d^{-4/3}$.

\vspace{-5pt}
\section{Online ZO with Stochastic Residual Feedback}
\label{sec:stochastic}
In this section, we study the Problem (Q) where the objective function takes the form $f(x) := \mathbb{E} [ F(x, \xi)]$ and only noisy samples of the function value $F(x, \xi)$ are available. Specifically, we propose the following stochastic residual feedback
\begin{equation}
\label{eqn:GradientEstimate_Noise}
\tilde{g}(x_t) := \frac{u_t}{\delta} \big(F(x_t + \delta u_t, \xi_t) - F(x_{t-1} + \delta u_{t-1}, \xi_{t-1})\big),	
\end{equation}
where $\xi_{t-1}$ and $\xi_t$ are independent random samples that are sampled in iterations $t-1$ and $t$, respectively. We note that our stochastic residual feedback is more practical than most existing two-point feedback schemes, which require the data samples to be controllable, i.e., one can query the function value at two different variables using the same data sample. This assumption is unrealistic in applications where the environment is dynamic. For example, in reinforcement learning \cite{malik2018derivative}, these data samples can correspond to random initial states, noises added to the dynamical system, and reward functions. Therefore, controlling the data samples requires to hard reset the system to the exact same initial state and apply the same sequence of noises, which is impossible when the data is collected from a real-world system. Our stochastic residual feedback scheme in \eqref{eqn:GradientEstimate_Noise} does not suffer from the same issue since it does  not restrict the data sampling procedure. Instead, it simply takes the residual between two consecutive stochastic feedback points. In particular, it is straightforward to show that \eqref{eqn:GradientEstimate_Noise} is an unbiased gradient estimate of the objective function $f_\delta(x)$. Next, we present some assumptions that are used in our analysis later.
\begin{asmp}
	\label{asmp:BoundedVariance}
	(Bounded Variance) We assume that for any $x \in \mathbb{R}^d$ there exists $\sigma>0$ such that
	\vspace{-4pt}
	\begin{equation*}
		\label{eqn:BoundedVarianceFunc}
		\mathbb{E}\big[ \big(F(x, \xi) - f(x) \big)^2 \big] \leq \sigma^2.
	\end{equation*}
\end{asmp}
\vspace{-4pt}
Assumption~\ref{asmp:BoundedVariance} implies that $\mathbb{E}\big[ \big(F(x, \xi_1) -  F(x, \xi_2) \big)^2 \big] \leq 4\sigma^2$. Furthermore, we make the following smoothness assumption in the stochastic setting.
\begin{asmp}
	\label{asmp:BoundedLipschitz}
	Let function $F(x, \xi) \in C^{0,0}$ with Lipschitz constant $L_0(\xi)$. We assume that $L_0(\xi) \leq L_0$ for all $\xi \in \Xi$. In addition, let the function $F(x, \xi) \in C^{1,1}$ with Lipschitz constant $L_1(\xi)$. We assume that $L_1(\xi) \leq L_1$ for all $\xi \in \Xi$.
\end{asmp}

The following lemma provides an upper bound of $\mathbb{E}[ \|\tilde{g}(x_t)\|^2 ]$ in this stochastic setting.

\begin{lem}
	\label{lem:BoundSecondMoment_Stoch}
	Let Assumptions~\ref{asmp:BoundedVariance} and \ref{asmp:BoundedLipschitz} hold and assume $F(x, \xi) \in C^{0,0}$ with Lipschitz constant $L_{0}(\xi)$. We have that 
	\vspace{-4pt}
	\begin{equation*}
	\mathbb{E}[\|\tilde{g} (x_t)\|^2] \leq \frac{4 L_{0}^2 d \eta^2}{\delta^2}\mathbb{E}[\|\tilde{g}(x_{t-1})\|^2] + 16 L_{0}^2 (d + 4)^2 + \frac{8 \sigma^2 d}{\delta^2}.
	\end{equation*}
\end{lem}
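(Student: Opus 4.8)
The plan is to reduce the stochastic bound to the deterministic bound of Lemma~\ref{lem:BoundSecondMoment_Det} plus a separate noise contribution. I would isolate a ``function part'' and a ``noise part'' by introducing the zero-mean noise $\zeta(x,\xi) := F(x,\xi) - f(x)$, which satisfies $\mathbb{E}[\zeta(x,\xi)^2]\le \sigma^2$ at every fixed $x$ by Assumption~\ref{asmp:BoundedVariance}. Abbreviating $F_t := F(x_t+\delta u_t,\xi_t)$, $f_t := f(x_t+\delta u_t)$, and $\zeta_t := \zeta(x_t+\delta u_t,\xi_t)$ so that $F_t = f_t + \zeta_t$, one has
\begin{equation*}
F_t - F_{t-1} = (f_t - f_{t-1}) + (\zeta_t - \zeta_{t-1}).
\end{equation*}
Plugging this into $\|\tilde g(x_t)\|^2 = \delta^{-2}\|u_t\|^2 (F_t - F_{t-1})^2$ and applying $(a+b)^2 \le 2a^2 + 2b^2$ gives
\begin{equation*}
\mathbb{E}[\|\tilde g(x_t)\|^2] \le \frac{2}{\delta^2}\mathbb{E}\big[\|u_t\|^2 (f_t - f_{t-1})^2\big] + \frac{2}{\delta^2}\mathbb{E}\big[\|u_t\|^2 (\zeta_t - \zeta_{t-1})^2\big].
\end{equation*}

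For the function term, I would observe that it equals exactly twice the quantity already controlled in Lemma~\ref{lem:BoundSecondMoment_Det}. The derivation there uses only the SGD identity $x_t - x_{t-1} = -\eta \tilde g(x_{t-1})$, the $L_0$-Lipschitz continuity of $f$ (which is inherited from the uniform Lipschitz bound on $F(\cdot,\xi)$ in Assumption~\ref{asmp:BoundedLipschitz}, since Lipschitz continuity is preserved under the expectation defining $f$), the independence of $u_t$ from $(u_{t-1},\tilde g(x_{t-1}))$, and the Gaussian moment estimates $\mathbb{E}[\|u_t\|^2]=d$ and $\mathbb{E}[\|u_t\|^4]\le(d+4)^2$. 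All of these remain valid in the stochastic setting, so the same argument yields $\delta^{-2}\mathbb{E}[\|u_t\|^2(f_t-f_{t-1})^2] \le \tfrac{2dL_0^2\eta^2}{\delta^2}\mathbb{E}[\|\tilde g(x_{t-1})\|^2] + 8L_0^2(d+4)^2$, and multiplying by $2$ produces the first two terms of the claimed bound.

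For the noise term, I would split $(\zeta_t - \zeta_{t-1})^2 \le 2\zeta_t^2 + 2\zeta_{t-1}^2$ and treat each piece. The quantity $\mathbb{E}[\|u_t\|^2\zeta_{t-1}^2]$ is easy: $u_t$ is independent of everything generated up to iteration $t-1$, hence of $\zeta_{t-1}$, so it factors as $\mathbb{E}[\|u_t\|^2]\,\mathbb{E}[\zeta_{t-1}^2]\le d\sigma^2$. The delicate piece, and the main obstacle, is $\mathbb{E}[\|u_t\|^2\zeta_t^2]$, since both factors depend on $u_t$ and do not decouple directly. Here I would condition on the sigma-field $\mathcal{F}_{t-1}$ of the past together with $u_t$: given these, the point $x_t+\delta u_t$ is fixed and the only remaining randomness in $\zeta_t$ comes from $\xi_t$, so Assumption~\ref{asmp:BoundedVariance} applied pointwise gives $\mathbb{E}[\zeta_t^2 \mid \mathcal{F}_{t-1},u_t] \le \sigma^2$; taking the outer expectation over $u_t$ then yields $\mathbb{E}[\|u_t\|^2\zeta_t^2]\le \sigma^2\,\mathbb{E}[\|u_t\|^2] = d\sigma^2$. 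Combining, the noise term is at most $\tfrac{2}{\delta^2}(2d\sigma^2 + 2d\sigma^2) = \tfrac{8\sigma^2 d}{\delta^2}$, which is exactly the last term of the stated inequality. Summing the function and noise contributions completes the proof.
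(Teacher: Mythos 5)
Your proof is correct and arrives at the paper's bound with identical constants, but it uses a genuinely different decomposition. The paper splits the residual by swapping the \emph{sample} first: it adds and subtracts $F(x_{t-1}+\delta u_{t-1},\xi_t)$, so its function part is $F(x_t+\delta u_t,\xi_t)-F(x_{t-1}+\delta u_{t-1},\xi_t)$, controlled by the Lipschitz constant of the sampled function $F(\cdot,\xi_t)$ from Assumption~\ref{asmp:BoundedLipschitz} (followed by the same add-and-subtract of the intermediate point $x_{t-1}+\delta u_t$ as in the proof of Lemma~\ref{lem:BoundSecondMoment_Det}), while its noise part is $F(x_{t-1}+\delta u_{t-1},\xi_t)-F(x_{t-1}+\delta u_{t-1},\xi_{t-1})$, i.e.\ two independent samples at the same already-generated point, so the factor $\|u_t\|^2$ decouples with no conditioning argument and Assumption~\ref{asmp:BoundedVariance} yields the $4\sigma^2$ bound directly. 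You instead split $F=f+\zeta$ into mean plus zero-mean noise, rerun the deterministic argument on the mean $f$ (whose $L_0$-Lipschitzness is inherited from Assumption~\ref{asmp:BoundedLipschitz} by taking expectations), correctly noting that its only ingredients --- the SGD identity, Lipschitzness, independence of $u_t$ from the past, and the Gaussian moment bounds --- remain valid when the iterates are driven by the stochastic estimator. The price of your decomposition is that $\zeta_t$ is evaluated at $x_t+\delta u_t$ and is therefore coupled with the factor $\|u_t\|^2$; you resolve this correctly by conditioning on the past together with $u_t$ and applying the variance assumption pointwise in the evaluation point, a step the paper's route never needs. In exchange, your argument is slightly more modular and more general: it only requires the \emph{mean} $f$ to be $L_0$-Lipschitz plus bounded variance, whereas the paper's decomposition invokes Lipschitz continuity of every sampled function $F(\cdot,\xi)$. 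Both routes apply $(a+b)^2\le 2a^2+2b^2$ at the top level and both land on $\frac{4dL_0^2\eta^2}{\delta^2}\mathbb{E}[\|\tilde{g}(x_{t-1})\|^2]+16L_0^2(d+4)^2+\frac{8\sigma^2 d}{\delta^2}$.
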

The proof can be found in Appendix~\ref{sec:SecondMomentBound_Stoch}. If we assume that $F(x, \xi) \in C^{1,1}$, the upper bound on the above second moment can be further improved (see supplementary material for the details). However, this improvement does not yield a better iteration complexity due to the uncontrollable samples $\xi_t$ and $\xi_{t-1}$. More specifically, the uncontrollable samples lead to an additional term $\frac{8 \sigma^2 d}{\delta^2}$ in the above second moment bound. According to the analysis in \cite{hu2016bandit}, such a term can significantly degrade the iteration complexity. 
\subsection{Convergence Analysis}
Next, we analyze the iteration complexity of ZO with stochastic residual feedback for both non-convex and convex problems.

\begin{thm}
	\label{thm:NoisyOnline_Nonconvex}
	Let Assumptions~\ref{asmp:BoundedVariance} and \ref{asmp:BoundedLipschitz} hold and assume also that $F(x, \xi) \in C^{0,0}$. Moreover, assume that SGD in \eqref{eqn:SGD} with residual feedback is run for $T>1/(d \epsilon_f)$ iterations and that $\tilde{x}$ is selected from the $T$ iterates uniformly at random. Let also
	$\eta = \frac{\epsilon_f^{1.5}}{2\sqrt{2}L_0^2 d^{1.5} \sqrt{T}}$ and $\delta = \frac{\epsilon_f}{L_0 \sqrt{d}}$. Then, we have that $\mathbb{E}\big[ \|\nabla f_\delta(\tilde{x}) \|^2\big] = \mathcal{O}(d^{1.5}\epsilon_f^{-1.5} {T}^{-0.5})$.
	
	Furthermore, assume that additionally $F(x, \xi) \in C^{1,1}$, and that SGD in \eqref{eqn:SGD} with residual feedback is run for $T>2$ iterations. Let also $\eta = \frac{1}{2L_{0} d^{\frac{4}{3}} T^{\frac{2}{3}}}$ and $\delta = \frac{1}{ d^{\frac{5}{6}} T^\frac{1}{6}}$. Then, the output $\tilde{x}$ that is sampled uniformly from the $T$ iterates satisfies $\mathbb{E}\big[ \|\nabla f(\tilde{x})\|^2\big] = \mathcal{O}(d^\frac{4}{3}T^{-\frac{1}{3}})$.
\end{thm}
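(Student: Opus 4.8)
The plan is to couple a descent inequality with the second-moment recursion of Lemma~\ref{lem:BoundSecondMoment_Stoch}, tuning $\eta$ and $\delta$ so that the recursion contracts. The structural fact I rely on, mirroring Lemma~\ref{lem:UnbiasedEstimate_Noiseless}, is that the stochastic residual feedback is \emph{conditionally} unbiased: letting $\mathcal{F}_t$ denote the history through the generation of $x_t$ (including $u_{t-1},\xi_{t-1}$), the inherited term $F(x_{t-1}+\delta u_{t-1},\xi_{t-1})$ is $\mathcal{F}_t$-measurable and multiplies the fresh, zero-mean, independent $u_t$, so after averaging over $\xi_t$ and $u_t$ we get $\mathbb{E}[\tilde{g}(x_t)\mid\mathcal{F}_t]=\nabla f_\delta(x_t)$.

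For the nonsmooth part ($F\in C^{0,0}$), since $f_\delta\in C^{1,1}$ with constant $\widehat{L}_1=\sqrt{d}\,\delta^{-1}L_0$ by Lemma~\ref{lem:SmoothedFunctionLipschitiz}, I would apply the descent lemma to $f_\delta$ along $x_{t+1}=x_t-\eta\tilde{g}(x_t)$, take conditional expectations to replace $\inner{\nabla f_\delta(x_t)}{\tilde{g}(x_t)}$ by $\|\nabla f_\delta(x_t)\|^2$, and telescope over $t=0,\dots,T-1$ to obtain
\begin{equation*}
\eta\sum_{t=0}^{T-1}\mathbb{E}\big[\|\nabla f_\delta(x_t)\|^2\big]\le f_\delta(x_0)-\inf f_\delta+\frac{\widehat{L}_1\eta^2}{2}\sum_{t=0}^{T-1}\mathbb{E}\big[\|\tilde{g}(x_t)\|^2\big].
\end{equation*}
The remaining sum is controlled by iterating Lemma~\ref{lem:BoundSecondMoment_Stoch}, written as $a_t\le\rho\,a_{t-1}+B$ with $\rho=4L_0^2d\eta^2/\delta^2$ and $B=16L_0^2(d+4)^2+8\sigma^2 d/\delta^2$. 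With the stated $\eta,\delta$ one computes $\rho=\epsilon_f/(2dT)$, which the hypothesis $T>1/(d\epsilon_f)$ comfortably forces below $1/2$, so the geometric series yields $\sum_{t=0}^{T-1}a_t\le 2(a_0+TB)$. Dividing by $\eta T$ and using that $\tilde{x}$ is uniform over the iterates, the dominant contributions are $(f_\delta(x_0)-\inf f_\delta)/(\eta T)$ and $\widehat{L}_1\eta B$; substituting $\widehat{L}_1=L_0^2 d/\epsilon_f$, $B=\mathcal{O}(\sigma^2 L_0^2 d^2/\epsilon_f^2)$, and the chosen $\eta$, both collapse to $\mathcal{O}(d^{1.5}\epsilon_f^{-1.5}T^{-0.5})$, while the $a_0$ term carries an extra $1/T$ and is lower order; boundedness of $f_\delta$ below follows from Lemma~\ref{lem:GaussianApprox}.

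For the smooth part ($F\in C^{1,1}$) I would instead run the descent lemma on $f$ itself, since accuracy is now measured by $\|\nabla f\|^2$. The only new wrinkle is that $\tilde{g}$ stays unbiased for $\nabla f_\delta$, not $\nabla f$, so I split $\inner{\nabla f(x_t)}{\nabla f_\delta(x_t)}=\|\nabla f(x_t)\|^2+\inner{\nabla f(x_t)}{\nabla f_\delta(x_t)-\nabla f(x_t)}$ and absorb the cross term by Young's inequality together with the gradient approximation bound $\|\nabla f_\delta-\nabla f\|\le\delta L_1(d+3)^{3/2}$ from Lemma~\ref{lem:GaussianApprox}, leaving $\tfrac12\|\nabla f(x_t)\|^2$ and a $\tfrac12\delta^2 L_1^2(d+3)^3$ bias penalty. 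Reusing the $C^{0,0}$ second-moment recursion (the $C^{1,1}$ refinement does not improve matters), the stated choices give $\rho=1/T$, so $T>2$ exactly secures $\rho<1/2$ and summability. After dividing by $\eta T/2$, the three surviving terms — the descent term $\mathcal{O}(d^{4/3}T^{-1/3})$, the bias penalty $\delta^2 L_1^2(d+3)^3=\mathcal{O}(d^{4/3}T^{-1/3})$, and the variance term $2L_1\eta\cdot(8\sigma^2 d/\delta^2)=\mathcal{O}(d^{4/3}T^{-1/3})$ — all match the claimed rate, with the $L_0^2(d+4)^2$ and $a_0$ pieces falling to lower order.

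The main obstacle is the variance term $8\sigma^2 d/\delta^2$ in Lemma~\ref{lem:BoundSecondMoment_Stoch}, the price of uncontrollable samples, which does not vanish under smoothness. It couples the two free parameters tightly: $\delta$ must shrink to kill the smoothing bias yet cannot shrink too fast or $8\sigma^2 d/\delta^2$ blows up, and simultaneously $\eta$ must keep $\rho=4L_0^2d\eta^2/\delta^2$ bounded away from $1$ so the recursion stays summable. Verifying that the specific exponents of $d$ and $T$ in the stated $\eta,\delta$ jointly enforce $\rho<1/2$ and balance the descent, bias, and variance terms at the common rates $d^{1.5}\epsilon_f^{-1.5}T^{-0.5}$ and $d^{4/3}T^{-1/3}$ is the real computational heart of the argument.
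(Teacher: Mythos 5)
Your proposal is correct and follows essentially the same route as the paper's proof: the descent lemma applied to the smoothed objective, conditional unbiasedness of the stochastic residual feedback, the second-moment recursion of Lemma~\ref{lem:BoundSecondMoment_Stoch} summed as a geometric series under the contraction condition $\rho < 1/2$ (with the identical computations $\rho = \epsilon_f/(2dT)$ and $\rho = 1/T$ for the two parameter choices), and the same identification of the dominant descent, bias, and variance terms. The only cosmetic difference is in the smooth case, where you run the descent lemma on $f$ and absorb the $\nabla f_\delta - \nabla f$ bias through Young's inequality on the cross term, while the paper descends on $f_\delta$ and converts afterwards via $\tfrac{1}{2}\|\nabla f(x_t)\|^2 \le \|\nabla f_\delta(x_t)\|^2 + \|\nabla f(x_t)-\nabla f_\delta(x_t)\|^2$; both produce the same $\tfrac{1}{2}$ factor and the same $\delta^2 L_1^2(d+3)^3$ penalty, so the arguments are equivalent.
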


The proof can be found in Appendix~\ref{sec:Proof_NoisyOnlineNonconvex}. Based on the above results, when $F(x, \xi)$ is non-smooth, to achieve the $\epsilon-$stationary point $\mathbb{E}\big[ \|\nabla f_\delta(\tilde{x}) \|^2\big] \leq \epsilon$ and $|f(x) - f_\delta(x)| \leq \epsilon_f$,  $\mathcal{O}(\frac{d^3}{\epsilon_f^3 \epsilon^2})$ iterations are needed. In addition, if the function $F(x, \xi)$ also satisfies $F(x, \xi) \in C^{1,1}$, then $\mathcal{O}(\frac{d^4}{\epsilon^3})$ iterations are needed to find the $\epsilon-$stationary point of the original function $f(x)$.
Next, we provide the iteration complexity results when the Problem (Q) is convex. 
\begin{thm}
	\label{thm:NoisyOnline_Convex}
	Let Assumptions~\ref{asmp:BoundedVariance} and \ref{asmp:BoundedLipschitz} hold and assume that the function $F(x, \xi) \in C^{0,0}$ is also convex. Moreover, assume that SGD in \eqref{eqn:SGD} with residual feedback is run for $T$ iterations and define the running average $\bar{x} = \frac{1}{T} \sum_{t=0}^{T-1} x_t$. Let also $\eta = \frac{1}{2\sqrt{2} L_0 \sqrt{d} T^{\frac{3}{4}}}$ and $\delta = \frac{1}{T^{\frac{1}{4}}}$. Then, we have that $f(\bar{x})  - f(x^\ast) = \mathcal{O}(\sqrt{d}T^{-\frac{1}{4}})$.
	Moreover, assume that additionally $F(x, \xi) \in C^{1,1}$, and let $\eta = \frac{1}{2\sqrt{2} L_0 d^\frac{2}{3} T^{\frac{2}{3}}}$ and $\delta = \frac{1}{  d^\frac{1}{6} T^{\frac{1}{6}}}$. Then, we have that $f(\bar{x})  - f(x^\ast) = \mathcal{O}(d^\frac{2}{3}T^{-\frac{1}{3}})$.
\end{thm}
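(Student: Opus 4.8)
The plan is to run the standard convex SGD analysis, but carried out on the smoothed surrogate $f_\delta$ (for which $\tilde g(x_t)$ is unbiased) rather than on $f$ itself, and to transfer the resulting bound back to $f$ at the very end via the Gaussian approximation of Lemma~\ref{lem:GaussianApprox}. First I would start from the update \eqref{eqn:SGD} and expand
\[
\|x_{t+1}-x^\ast\|^2 = \|x_t - x^\ast\|^2 - 2\eta \inner{\tilde g(x_t)}{x_t - x^\ast} + \eta^2 \|\tilde g(x_t)\|^2 .
\]
Taking the conditional expectation given the history $\mathcal F_{t-1}$ through iteration $t-1$, the residual feedback is conditionally unbiased, $\mathbb{E}[\tilde g(x_t)\mid \mathcal F_{t-1}] = \nabla f_\delta(x_t)$: the term containing $F(x_{t-1}+\delta u_{t-1},\xi_{t-1})u_t$ vanishes exactly as in Lemma~\ref{lem:UnbiasedEstimate_Noiseless} because the fresh $u_t$ is zero-mean and independent of everything in $\mathcal F_{t-1}$, while the independence of $\xi_t$ from $u_t$ preserves the Nesterov identity for $f_\delta=\mathbb{E}_\xi[F_\delta(\cdot,\xi)]$. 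Since $f$ convex implies $f_\delta$ convex, I then lower-bound $\inner{\nabla f_\delta(x_t)}{x_t-x^\ast}\ge f_\delta(x_t)-f_\delta(x^\ast)$.

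Rearranging, taking full expectations, and summing over $t=0,\dots,T-1$ telescopes the $\|x_t-x^\ast\|^2$ terms and yields
\[
2\eta \sum_{t=0}^{T-1}\mathbb{E}\big[f_\delta(x_t) - f_\delta(x^\ast)\big] \le \|x_0 - x^\ast\|^2 + \eta^2 \sum_{t=0}^{T-1}\mathbb{E}\big[\|\tilde g(x_t)\|^2\big].
\]
The crux is the cumulative second moment on the right. Here I would invoke Lemma~\ref{lem:BoundSecondMoment_Stoch}, a perturbed contraction $\mathbb{E}[\|\tilde g(x_t)\|^2]\le \rho\,\mathbb{E}[\|\tilde g(x_{t-1})\|^2]+C$ with $\rho = 4L_0^2 d\eta^2/\delta^2$ and $C = 16L_0^2(d+4)^2 + 8\sigma^2 d/\delta^2$. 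The chosen parameters give $\rho = 1/(2T)\le 1/2$, so unrolling the recursion as a geometric series gives $\sum_{t=0}^{T-1}\mathbb{E}[\|\tilde g(x_t)\|^2] \le (1-\rho)^{-1}\big(\mathbb{E}[\|\tilde g(x_0)\|^2]+TC\big)\le 2\big(\mathbb{E}[\|\tilde g(x_0)\|^2]+TC\big)$, where the initial term only contributes a negligible lower-order correction after division by $\eta T$.

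Dividing by $2\eta T$ and applying Jensen to the running average leaves the two dominant terms $\frac{\|x_0-x^\ast\|^2}{2\eta T}$ and $\eta C$. With $\eta=(2\sqrt2 L_0\sqrt d\,T^{3/4})^{-1}$ and $\delta=T^{-1/4}$ the variance part $8\sigma^2 d/\delta^2 = 8\sigma^2 d\sqrt T$ dominates $C$, and both dominant terms balance to $\mathcal O(\sqrt d\,T^{-1/4})$. Finally I would transfer to $f$ through $|f_\delta-f|\le \delta L_0\sqrt d = L_0\sqrt d\,T^{-1/4}$, which is of the same order, giving $f(\bar x)-f(x^\ast)=\mathcal O(\sqrt d\,T^{-1/4})$.

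The $C^{1,1}$ case follows the identical template with the sharper second-moment bound and the quadratic smoothing error $|f_\delta-f|\le \delta^2 L_1 d$; crucially, smoothness permits the larger exploration radius $\delta=d^{-1/6}T^{-1/6}$ while keeping the bias small, which shrinks the dominant variance term $\sigma^2 d/\delta^2$ and sharpens the rate to $\mathcal O(d^{2/3}T^{-1/3})$. The main obstacle I anticipate is the simultaneous calibration of $\eta$ and $\delta$: they must jointly keep the contraction $\rho<1$, keep the smoothing bias at the target order, and balance the optimization term $\frac{\|x_0-x^\ast\|^2}{\eta T}$ against the noise term $\eta\sigma^2 d/\delta^2$, an over-determined-looking system that only closes for the stated exponents. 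A secondary subtlety in the $C^{1,1}$ bound is the gradient-dependent term $\|\nabla f(x_{t-1})\|^2$; since $f\in C^{0,0}$ it is bounded by $L_0^2$, so no coupling back into the loss-gap telescoping is needed and the $\sigma^2 d/\delta^2$ term remains dominant, consistent with the remark that the $C^{1,1}$ improvement does not further reduce the complexity order beyond what the enlarged $\delta$ already provides.
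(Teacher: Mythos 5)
Your proposal is correct and follows essentially the same route as the paper's proof: the standard convex SGD recursion applied to $f_\delta$, the perturbed-contraction second-moment bound of Lemma~\ref{lem:BoundSecondMoment_Stoch} unrolled as a geometric series with $\alpha = 4dL_0^2\eta^2/\delta^2 = 1/(2T) \le 1/2$, balancing of the $\frac{\|x_0-x^\ast\|^2}{\eta T}$ and $\eta\sigma^2 d/\delta^2$ terms, Jensen's inequality for the running average, and the Gaussian-approximation bias transfer of Lemma~\ref{lem:GaussianApprox}. The only (immaterial) difference is in the $C^{1,1}$ case, where the paper simply reuses the $C^{0,0}$ second-moment bound rather than invoking a sharper smooth one, which is exactly consistent with your own observation that the dominant $\sigma^2 d/\delta^2$ term is unaffected and the improvement comes solely from the larger $\delta$ permitted by the quadratic bias $\delta^2 L_1 d$.
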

The proof can be found in Appendix~\ref{sec:Proof_NoisyOnlineConvex}. According to Theorem~\ref{thm:NoisyOnline_Convex},  $\mathcal{O}(\frac{d^2}{\epsilon^4})$ iterations are needed to achieve $f(\bar{x}) - f(x^\ast) \leq \epsilon$ with a nonsmooth objective function. On the other hand, if $f(x) \in C^{1,1}$, the iteration complexity is improved to $\mathcal{O}(\frac{d^{2}}{\epsilon^{3}})$.

\section{ZO with Mini-batch Stochastic Residual Feedback}
When applying zeroth-order oracles to practical applications, instead of directly using the oracle~\eqref{eqn:GradientEstimate_Noise}, a mini-batch scheme can be implemented to further reduce the variance of the gradient estimate, as discussed in \cite{fazel2018global}. To be more specific, consider the gradient estimate with batch size $b$:
\begin{equation*}
	\tilde{g}_b(x_t) =\frac{u_t}{b \delta} \big(F( x_t + \delta u_t, \xi_{1:b}) - F(x_{t-1} + \delta u_{t-1}, \xi_{1:b}') \big),
\end{equation*}
where $  F( x_t + \delta u_t, \xi_{1:b}) = \sum_{j = 1}^{b}  F( x_t + \delta u_t, \xi_{j})$. It is straightforward to see that the variance of $\tilde{g}_b(x_t)$ is $b^2$ times smaller than that of the oracle~\eqref{eqn:GradientEstimate_Noise}. This is particularly useful when the problem is sensitive to bad search directions. For example, in the policy optimization problem \cite{fazel2018global}, when the gradient has large variance, it can drive the policy parameter to divergence and result in infinite cost. A Mini-batch scheme can reduce the variance of the policy gradient (search direction) estimate and therefore is of particular interest in this scenario. In this paper, we show that using the oracle~\eqref{eqn:GradientEstimate_Noise} in a mini-batch scheme can achieve the same query complexity as standard SGD. Its analysis is provided in Appendix~\ref{sec:Proof_Minibatch}.
\vspace{-2mm}

\section{Numerical Experiments}
\label{sec:exp}
In this section, we demonstrate the effectiveness of the residual one-point feedback scheme for both deterministic and stochastic problems. In the deterministic case, we compare the performance of the proposed oracle with the original one-point feedback and two-point feedback schemes, for the quadratic programming (QP) example considered in \cite{shamir2013complexity}. In the stochastic case, we employ the stochastic variants of above oracles to optimize the policy parameters in a Linear Quadratic Regulation (LQR) problem considered in \cite{fazel2018global,malik2018derivative}. It is shown that the proposed residual one-point feedback significantly outperforms the traditional one-point feedback and its convergence rate matches that of the two-point oracles in both deterministic and stochastic cases. Furthermore, we apply our residual-feedback zeroth-order gradient estimate to solve a large-scale stochastic multi-stage decision making problem to demonstrate its performance in the high dimensional problems.
All experiments are conducted using Matlab R2018b on a 2018 Macbook Pro with a 2.3 GHz Quad-Core Intel Core i5 and 8GB 2133MHz memory. 

In all the experiments, we first manually select the exploration parameter $\delta$. Then, we tune the stepsize $\eta$ so that all algorithms converge at their fastest speed.
\subsection{A Deterministic Scenario: QP Problem}
\begin{figure}[t]
	\centering
	\subfigure[\label{fig:comparison}]{
	\includegraphics[width=0.9\columnwidth]{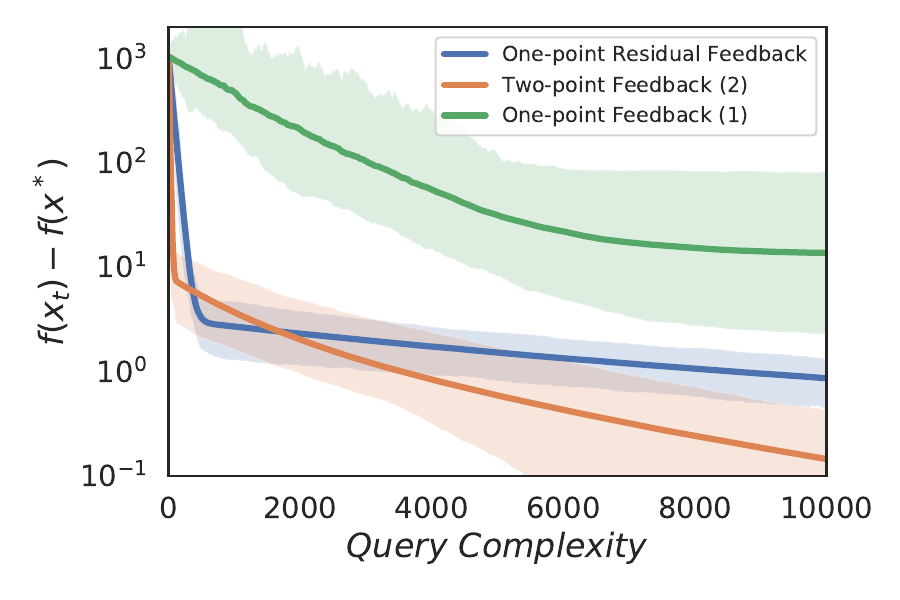}}
	\subfigure[\label{fig:lqr}]{
	\includegraphics[width=0.9\columnwidth]{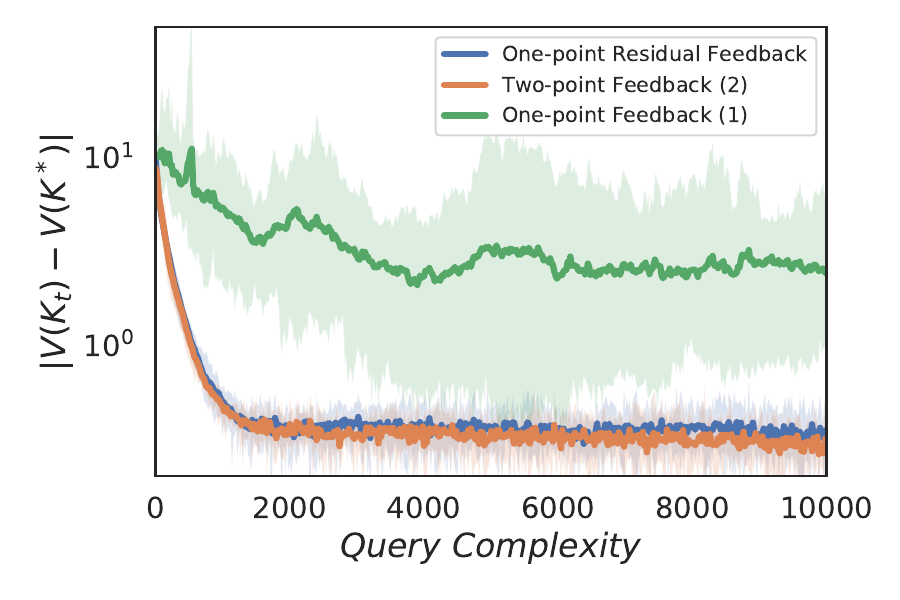}}
	\vspace{-2mm}
	\caption{\small The convergence rate of applying the proposed residual one-point feedback~\eqref{eqn:GradientEstimate_Noiseless} (blue), the two-point oracle \eqref{eqn:TwoPoint} in \cite{nesterov2017random} (orange) and the one-point oracle~\eqref{eqn:OnePoint} in \cite{flaxman2005online} (green) to two problems. In (a), the convergence of $f(x_t) - f(x^\ast)$ in a deterministic QP problem is presented. In (b), the convergence of the costs of policies in the stochastic LQR problem is presented. }
\end{figure}
As in \cite{shamir2013complexity}, consider the QP example $\min \frac{1}{2} (x - c)^T M (x - c)$, 
where $x, c \in \mathbb{R}^{30}$ and $M \in \mathbb{R}^{30 \times 30}$ is a positive semi-definite matrix. This constitutes a convex and smooth problem. The vector $c$ is randomly generated from a uniform distribution in $[0, 2]$. The matrix $M = PP^T$, where each entry in $P \in \mathbb{R}^{30 \times 29}$ is sampled from a uniform distribution in $[0,1]$. The initial point is the origin. For every algorithm, we manually optimize the selection of the exploration parameter $\delta$ and stepsize $\eta$ and run it $100$ times. Specifically, we select $\delta$ as $\delta = 0.1$, and the stepsizes for the proposed residual feedback estimator, the two-point estimator and the conventional one-point estimator are $0.05, 0.1, 0.01$, respectively. The convergence of the function value $f(x) - f(x^\ast)$ is presented in Figure~\ref{fig:comparison}. We observe that the proposed oracle converges as fast as the two-point oracle~\eqref{eqn:TwoPoint} when the iterates are far from the optimizer but achieve less accuracy in the end. Both methods find the optimal function value much faster than the one-point feedback studied in \cite{flaxman2005online,gasnikov2017stochastic}. These observations validate our theoretical results in Section~\ref{sec:det}.
\subsection{A Stochastic Scenario: Policy Optimization}
We use the proposed residual feedback to optimize the policy parameters in a LQR problem, as in \cite{fazel2018global,malik2018derivative}. Specifically, consider a system whose state $x_k \in \mathbb{R}^{n_x}$ at time $k$ is subject to the dynamical equation $x_{k+1} = A x_k + B u_k + w_k$,
where $u_k \in \mathbb{R}^{n_u}$ is the control input at time $k$, $A \in \mathbb{R}^{n_x \times n_x}$ and $B \in \mathbb{R}^{n_x \times n_u}$ are dynamical matrices that are unknown, and $w_k$ is the noise on the state transition. Moreover, consider a state feedback policy $u_k = K x_k$, where $K \in \mathbb{R}^{n_u \times n_x}$ is the policy parameter. Policy optimization essentially aims to find the optimal policy parameter $K$ so that the discounted accumulated cost function $V(K) := \mathbb{E}\big[ \sum_{t = 0}^{\infty} \gamma^{t} (x_k^T Q x_k + u_k^T R u_k)\big]$
is minimized, where $\gamma \leq 1$ is the discount factor. 

In our simulation, we select $n_x = 6$, $n_u = 6$ and $\gamma = 0.5$. Therefore, the problem has dimension $d = 36$.
When implementing the policy $u_k = K_t x_k$, due to the noise $w_k$, evaluation of the cost of the policy $K_t$ is noisy.
We apply the one-point feedback~\eqref{eqn:OnePoint} with noise \cite{gasnikov2017stochastic}, two-point feedback with uncontrolled noise \cite{hu2016bandit,bach2016highly} and the residual one-point feedback~\eqref{eqn:GradientEstimate_Noise} to solve the above policy optimization problem. 
To evaluate the cost $V(K_t)$ given the policy parameter $K_t$ at iteration $t$, we run one episode with a finite horizon length $H = 50$. 
The dynamical matrices $A$ and $B$ are randomly generated and the noise $w_k$ is sampled from a Gaussian distribution $\mathcal{N}(0, 0.1^2)$. We select the exploration parameter $\delta$ as $\delta = 0.1$, and the stepsizes for the proposed residual feedback estimator, the two-point estimator and the conventional one-point estimator are $2\times10^{-3}, 2.5\times10^{-3}, 1.5\times10^{-4}$, respectively. 
We run each algorithm $10$ times. At each trial, all the algorithms start from the same initial guess of the policy parameter  $K_0$, which is generated by perturbing the optimal policy parameter $K^\ast$ with a random matrix, as in \cite{malik2018derivative}. Each entry in this random perturbation matrix is sampled from a uniform distribution in $[0, 0.2]$. The performance of all the algorithms over $10$ trials is measured in terms of $|V(K_t) - V(K^\ast)|$ and is presented in Figure~\ref{fig:lqr}. 
We observe that the residual one-point feedback~\eqref{eqn:GradientEstimate_Noise} converges much faster than the one-point oracle in \cite{gasnikov2017stochastic} and has comparable query complexity to the two-point feedback under uncontrolled noises considered in \cite{hu2016bandit,bach2016highly}. This corroborates our theoretical analysis in Section~\ref{sec:stochastic}.

\subsection{Zeroth-Order Policy Optimization for a Large-Scale Multi-Stage Decision Making Problem}
\label{sec:LargeScale}
In this section, we consider a large-scale multi-stage resource allocation problem. Specifically, we consider $16$ agents that are located on a $4 \times 4$ grid. At agent $i$, resources are stored in the amount of $m_i(k)$ and there is also a demand for resources in the amount of $d_i(k)$ at instant $k$.
In the meantime, agent $i$ also decides what fraction of resources $a_{ij}(k) \in [0, 1]$ it sends to its neighbors $j \in  \mathcal{N}_i$ on the grid. The local amount of resources and demands at agent $i$ evolve as $m_i(k+1) = m_i(k) - \sum_{j \in \mathcal{N}_i} a_{ij}(k) m_i(k) + \sum_{j \in \mathcal{N}_i} a_{ji}(k) m_j(k) - d_i(k)$ and $d_i(k) = A_i \sin(\omega_i k + \phi_i) + w_{i,k}$,
where the amplitude $A_i$ is sampled uniformly from $[1,2]$, $\omega_i = 2 \pi$, $\phi_i$ is uniformly sampled from $[0, \pi]$, and  $w_{i,k}$ is the noise in the demand sampled from the normal distribution $\mathcal{N}(0, A_i^2/100)$. At time $k$, agent $i$ receives a local reward $r_i(k)$, such that $r_i(k) = 0$ when $m_i(k) \geq 0$ and $r_i(k) = -m_i(k)^2$ when $m_i(k) < 0$. Let agent $i$ makes its decisions according to a parameterized policy function $\pi_{i, \theta_i}(o_i): \mathcal{O}_i \rightarrow [0,1]^{|\mathcal{N}_i|}$, where $\theta_i$ is the parameter of the policy function $\pi_i$, $o_i \in \mathcal{O}_i$ denotes agent $i$'s observation, and $|\mathcal{N}_i|$ represents the number of agent $i$'s neighbors on the grid.

Our goal is to train a policy that can be executed in a fully distributed way based on agents' local information. Specifically, during the execution of policy functions $\{\pi_{i, \theta_i}(o_i)\}$, we let each agent only observe its local amount of resource $m_i(k)$ and demand and $d_i(k)$, i.e., $o_i(k) = [m_i(k), d_i(k)]^T$.
In addition, the policy function $\pi_{i, \theta_i}(o_i)$ is parameterized as the following: $a_{ij} = \exp(z_{ij}) / \sum_{j}\exp(z_{ij})$, where $z_{ij} = \sum_{p = 1}^{9} \psi_p(o_i) \theta_{ij}(p)$ and $\theta_i = [\dots, \theta_{ij}, \dots]^T$. Specifically, the feature function $\psi_p(o_i)$ is selected as $\psi_p(o_i) = \|o_i - c_p\|^2$, where $c_p$ is the parameter of the $p$-th feature function. Specifically, $c_p$ are selected as vectors lying in the two-dimensional grid $(-0.5, 0, 0.5)^2$. The goal for the agents is to find an optimal policy $\pi^\ast = \{\pi_{i, \theta_i}(o_i)\}$ so that the global accumulated reward
\begin{align}
	\label{eqn:Obj}
	J(\theta) = \sum_{i=1}^{16} \sum_{k = 0}^{K} \gamma^{k} r_i(k) 
\end{align} 
\begin{figure}[t]
	\centering
	\subfigure[\label{fig:resource}]{
		\includegraphics[width=0.9\columnwidth]{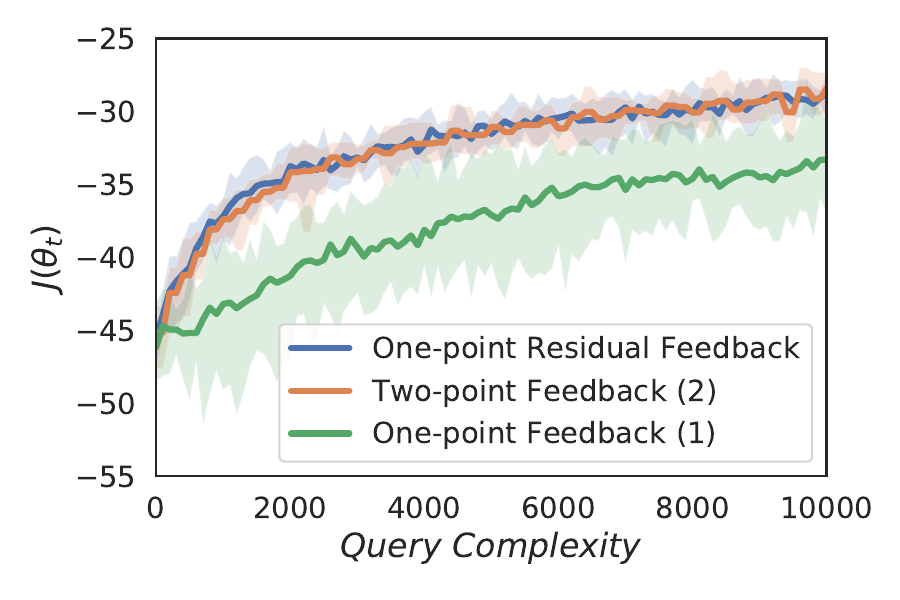}}
	\subfigure[\label{fig:navigation}]{
		\includegraphics[width=0.9\columnwidth]{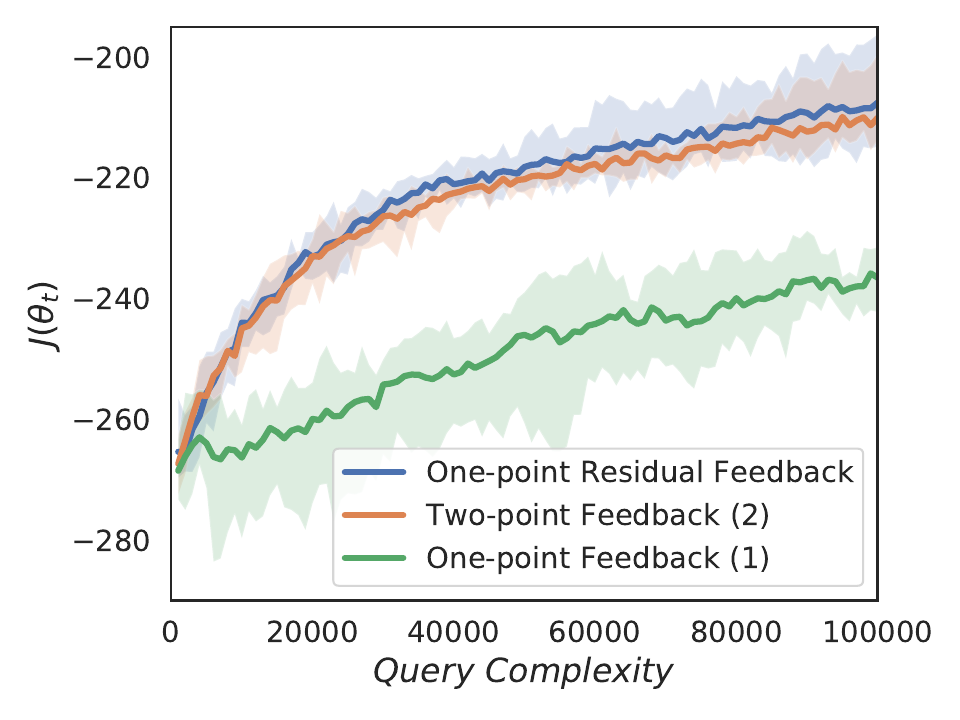}}
	\caption{\small The convergence rate of applying the proposed residual one-point feedback~\eqref{eqn:GradientEstimate_Noiseless} (blue), the two-point oracle \eqref{eqn:TwoPoint} in \cite{nesterov2017random} (orange) and the one-point oracle~\eqref{eqn:OnePoint} in \cite{flaxman2005online} (green) to the large-scale stochastic multi-stage resource allocation problem and the multi-robot cooperative navigation problem. The vertical axis represents the total rewards and the horizontal axis represents the number of episodes the agents take to evaluate their policy parameter iterates during the policy optimization procedure. }
\end{figure}
is maximized, where $\theta = [\dots, \theta_i, \dots]$ is the global policy parameter, $K$ is the horizon of the problem, and $\gamma$ is the discount factor. Effectively, the agents need to make decisions on $64$ actions, and each action is decided by $9$ parameters. Therefore, the problem dimension is $d = 576$.
To implement zeroth-order policy gradient estimators \eqref{eqn:OnePoint} and \eqref{eqn:GradientEstimate_Noise} to find the optimal policy, at iteration $t$, we let all agents implement the policy with parameter $\theta_t + \delta u_t$, collect rewards $\{r_i(k)\}$ at  time instants $k = 0, 1, \dots, K$ and compute the noisy policy value according to~\eqref{eqn:Obj}. Then, the zeroth-order policy gradient is estimated using~\eqref{eqn:OnePoint} or \eqref{eqn:GradientEstimate_Noise}. On the contrary, when the two-point zeroth-order policy gradient estimator~\eqref{eqn:TwoPoint} is used, at each iteration $k$, all agents need to evaluate two policies $\theta_t \pm \delta u_t$ to update the policy parameter once. In Figure~\ref{fig:resource}, we present the performance of using zeroth-order policy gradients~\eqref{eqn:OnePoint}, \eqref{eqn:TwoPoint} and \eqref{eqn:GradientEstimate_Noise} to solve this large-scale multi-stage resource allocation problem, where the discount factor is set as $\gamma = 0.75$ and the length of horizon $K = 30$. 
We select the exploration parameter $\delta$ as $\delta = 0.1$, and the stepsizes for the proposed residual feedback estimator, the two-point estimator and the conventional one-point estimator are $1\times10^{-4}, 1\times10^{-4}, 5\times10^{-5}$, respectively. Each algorithm is run for $10$ trials. 
We observe that policy optimization with the proposed residual-feedback gradient estimate~\eqref{eqn:GradientEstimate_Noise} improves the optimal policy parameters with the same learning rate as the two-point zeroth-order gradient estimator~\eqref{eqn:TwoPoint}, where the learning rate is measured by the number of episodes the agents take to evaluate the policy parameter iterates.
In the meantime, both estimators perform much better than the one-point policy gradient estimate~\eqref{eqn:OnePoint} considered in \cite{fazel2018global,malik2018derivative}.

\subsection{Zeroth-Order Policy Optimization for a Multi-Robot Cooperative Navigation Problem}
In this section, we demonstrate the effectiveness of the proposed one-point residual-feedback gradient estimator using the benchmark multi-agent particle environment \cite{lowe2017multi}. Specifically, we consider the two-agent two-landmark cooperative nagivation task, where the agents navigate to the landmarks in the environment without colliding into each other. At each time step, agent $i$ observes a vector $o_i \in \mathbb{R}^{12}$ consisting of all agents' states, i.e., their positions and velocities, and the two landmarks' positions. Then, agent $i$ selects a $5$ dimensional action vector based on its observation $o_i$, to drive itself around the world . The dynamics of the agents' states are governed by the physical engine used in the particle environment. At each time, the team of agents receive a team reward $r(k) = - \sum_{l = 1, 2} \min_{i = 1, 2} \|pos_i - pos_l\|$, where $l$ denotes the landmark index,  $pos_i$ and $pos_l$ represent the position vectors of agent $i$ and landmark $l$. In addition, if the agents collide at time step $k$, the team receives $-1$ as a penalty. In each episode, there are 25 time steps. 

We let each agent learn a policy function $\pi_{i,\theta_i}(o_i*)$ that is designed as a ReLU neural network with one hidden layer, where $\theta_i$ denotes the weights. The hidden layer consists of $32$ neurons. Therefore, each neural network policy function has $(12+1)\times32 + (32+1)\times5 = 581$ parameters to learn. The dimension of the problem is $d = 1162$. Since a ReLU activation function is used, the policy optimization problem is non-smooth. We implement the proposed residual-feedback policy gradient estimator, as well as the conventional one-point estimator~\eqref{eqn:OnePoint} and the two-point estimator~\eqref{eqn:TwoPoint}, for $5$ trials. Specifically, we select the exploration parameter $\delta$ as $\delta = 0.1$, and the stepsizes for the proposed residual feedback estimator, the two-point estimator and the conventional one-point estimator are $5\times10^{-6}, 1\times10^{-5}, 1\times10^{-6}$, respectively. The learning rates for these algorithms are manually tuned to achieve their best performance respectively. The results are presented in Figure~\ref{fig:navigation}. In this non-smooth setting, we observe that policy optimization with the proposed residual-feedback gradient still has comparable performance to that of the two-point policy gradient estimator~\eqref{eqn:TwoPoint} and both estimators perform much better than the one-point policy gradient estimate~\eqref{eqn:OnePoint}, similar to the smooth case in Section 6.3. 

\section{Conclusion}
In this paper, we proposed a residual one-point feedback oracle for zeroth-order optimization, which estimates the gradient of the objective function using a single query of the function value at each iteration. When the function evaluation is noiseless, we showed that ZO using the proposed oracle can achieve the same iteration complexity as ZO using two-point oracles when the function is non-smooth. When the function is smooth, this complexity of ZO can be further improved. This is the first time that a one-point zeroth-order oracle is shown to match the performance of two-point oracles in ZO. In addition, we considered a more realistic scenario where the function evaluation is corrupted by noise. We showed that the convergence rate of ZO using the proposed oracle matches the best known results using one-point feedback or two-point feedback with uncontrollable data samples. We provided numerical experiments that showed that the proposed oracle outperforms the one-point oracle and is as effective as two-point feedback methods.

\bibliographystyle{model5-names}        
\bibliography{biblio}           



\appendix

\noindent {\textbf{Appendix}}

\section{Proof of Lemma~\ref{lem:BoundSecondMoment_Det}}
\label{sec:BoundSecondMoment_Det}
First, we show the bound when $f(x) \in C^{0,0}$. Recalling the expression of $\tilde{g}(x_t)$ in \eqref{eqn:GradientEstimate_Noiseless}, we have that
\begin{align}
& \mathbb{E}[\|\tilde{g} (x_t)\|^2] = \mathbb{E}[\frac{1}{\delta^2} \big(f(x_t + \delta u_t) - f(x_{t-1} + \delta u_{t-1})\big)^2 \|u_t\|^2] & \nonumber \\
& \leq \frac{2}{\delta^2} \mathbb{E}[ \big(f(x_t + \delta u_t) - f(x_{t-1} + \delta u_{t})\big)^2 \|u_t\|^2] & \nonumber  \\
& + \frac{2}{\delta^2} \mathbb{E}[ \big( f(x_{t-1} + \delta u_{t}) - f(x_{t-1} + \delta u_{t-1})\big)^2 \|u_t\|^2]. & \nonumber
\end{align}
Since function $f \in C^{0,0}$ with Lipschitz constant $L_0$, we obtain that
\begin{align}
\label{eqn:BSM_2}
\mathbb{E}[\|\tilde{g} (x_t)\|^2] \leq & \frac{2 L_0^2}{\delta^2} \mathbb{E}[ \|x_t - x_{t-1}\|^2 \|u_t\|^2 ] & \nonumber \\
& + 2L_0^2 \mathbb{E}[\|u_t - u_{t-1}\|^2\|u_t\|^2]. & 
\end{align}
Since $u_t$ is independently sampled from $x_t - x_{t-1}$, we have that $\mathbb{E}[ \|x_t - x_{t-1}\|^2 \|u_t\|^2 ] = \mathbb{E}[\|x_t - x_{t-1}\|^2] \mathbb{E}[\|u_t\|^2]$. Since $u_t$ is subject to standard multivariate normal distribution, $\mathbb{E}[\|u_t\|^2] = d$. Furthermore, using Lemma 1 in \cite{nesterov2017random}, we get that $\mathbb{E}[\|u_t - u_{t-1}\|^2\|u_t\|^2] \leq 2\mathbb{E}[(\|u_t\|^2 + \|u_{t-1}\|^2)\|u_t\|^2] = 2\mathbb{E}[(\|u_t\|^4] +2\mathbb{E}[ \|u_{t-1}\|^2\|u_t\|^2] \leq 4(d+4)^2$. Plugging these bounds into inequality~\eqref{eqn:BSM_2}, we have that
\begin{equation*}
\label{eqn:BSM_3}
\mathbb{E}[\|\tilde{g} (x_t)\|^2] \leq \frac{2 d L_0^2}{\delta^2} \mathbb{E}[ \|x_t - x_{t-1}\|^2] + 8L_0^2 (d+4)^2.
\end{equation*}
Since $x_t = x_{t-1} - \eta \tilde{g}(x_{t-1})$, we get that
\begin{equation*}
\label{eqn:BSM_4}
\mathbb{E}[\|\tilde{g} (x_t)\|^2] \leq \frac{2 d L_0^2 \eta^2}{\delta^2} \mathbb{E}[ \|\tilde{g}(x_{t-1})\|^2] + 8L_0^2 (d+4)^2.
\end{equation*}
Next, we show the bound when we have the additional smoothness condition $f(x) \in C^{1,1}$ with constant $L_1$. Given the gradient estimate in \eqref{eqn:GradientEstimate_Noiseless}, we have that
\begin{equation}
\label{eqn:BSM_5}
\mathbb{E}[ \| \tilde{g}(x_t) \|^2 ] \leq \mathbb{E}[ \frac{ (f(x_t + \delta u_t) - f(x_{t-1} + \delta u_{t-1}) )^2 }{\delta^2} \|u_t\|^2 ].
\end{equation}
Next, we bound the term $( f(x_t + \delta u_t) - f(x_{t-1} + \delta u_{t-1}) )^2$. Adding and subtracting $f(x_{t-1} + \delta u_{t})$ inside the square, and applying the inequality $(a + b)^2 \leq 2a^2 + 2b^2$, we can obtain
\begin{align}
\label{eqn:BSM_6}
& ( f(x_t + \delta u_t) - f(x_{t-1} + \delta u_{t-1}) )^2  & \nonumber \\
& \leq 2( f(x_t + \delta u_t) - f(x_{t-1} + \delta u_{t}) )^2  & \nonumber \\
& \quad + 2( f(x_{t-1} + \delta u_{t}) - f(x_{t-1} + \delta u_{t-1}) )^2. &
\end{align} 
Since the function $f(x)$ is also Lipschitz continuous with constant $L_0$, we get that
\begin{align}
\label{eqn:BSM_7}
( f(x_t + \delta u_{t}) - f(x_{t-1} + & \delta u_{t}) )^2   \leq L_0^2 \|x_t - x_{t-1}\|^2 & \nonumber \\
&  = L_0^2 \eta^2 \|\tilde{g}(x_{t-1})\|^2. &
\end{align}
Next, we bound the term $( f(x_{t-1} + \delta u_t) - f(x_{t-1} + \delta u_{t-1}) )^2$. Adding and subtracting $f(x_{t-1})$, $\langle \nabla f(x_{t-1}), \delta u_t \rangle$ and $\langle \nabla f(x_{t-1}), \delta u_{t-1} \rangle$ inside the square term, we have that
\begin{align}
\label{eqn:BSM_8}
& ( f(x_{t-1} + \delta u_t) - f(x_{t-1} + \delta u_{t-1}) )^2 & \nonumber \\
& \leq  2\langle \nabla f(x_{t-1}), \delta (u_t - u_{t-1}) \rangle^2  &  \\
&  + 4(f(x_{t-1} + \delta u_t) - f(x_{t-1}) - \langle \nabla f(x_{t-1}),   \delta u_t \rangle)^2 & \nonumber \\
& + 4(f(x_{t-1} + \delta u_{t-1}) - f(x_{t-1}) - \langle \nabla f(x_{t-1}), \delta u_{t-1} \rangle)^2.  & \nonumber
\end{align}
Since $f(x) \in C^{1,1}$ with constant $L_1$, we get that $| f(x_{t-1} + \delta u_t) - f(x_{t-1}) - \langle \nabla f(x_{t-1}), \delta u_t \rangle | \leq \frac{1}{2} L_1 \delta^2 \|u_t\|^2$, according to (6) in \cite{nesterov2017random}. And similarly, we also have $|f(x_{t-1} + \delta u_{t-1}) - f(x_{t-1}) - \langle \nabla f(x_{t-1}), \delta u_{t-1} \rangle| \leq \frac{1}{2} L_1 \delta^2 \|u_{t-1}\|^2$. Substituting these inequalities into \eqref{eqn:BSM_8}, we obtain that
\begin{align}
\label{eqn:BSM_9}
& ( f(x_{t-1} + \delta u_t) - f(x_{t-1} + \delta u_{t-1}) )^2 \leq 2\langle \nabla f(x_{t-1}), & \nonumber \\
& \;\; \delta (u_t - u_{t-1}) \rangle^2  + L_1^2 \delta^4 \|u_t\|^4 + L_1^2 \delta^4 \|u_{t-1}\|^4. &
\end{align}
Moreover, substituting the inequalities~\eqref{eqn:BSM_7} and \eqref{eqn:BSM_9} in the upper bound in \eqref{eqn:BSM_6}, we get that
\begin{align}
\label{eqn:BSM_10}
& ( f(x_t + \delta u_t) - f(x_{t-1} + \delta u_{t-1}) )^2  & \nonumber \\
\leq & 2 L_0^2 \eta^2 \|\tilde{g}(x_{t-1})\|^2 +  4\langle \nabla f(x_{t-1}), \delta (u_t - u_{t-1}) \rangle^2  & \nonumber \\
& + 2L_1^2 \delta^4 \|u_t\|^4 + 2L_1^2 \delta^4 \|u_{t-1}\|^4 &
\end{align}
Using the bound \eqref{eqn:BSM_10} in inequality~\eqref{eqn:BSM_5}, and applying the bounds $\mathbb{E}[ \|u_t\|^6 ] \leq (d+6)^3$ and $\mathbb{E}[\|u_{t-1}\|^4 \|u_t\|^2] \leq (d+6)^3$, we have that
\begin{align}
\label{eqn:BSM_11}
& \mathbb{E}[ \| \tilde{g}(x_t) \|^2 ] \leq \frac{2d L_0^2 \eta^2}{\delta^2}\mathbb{E}[ \| \tilde{g}(x_{t-1}) \|^2 ] &  \\
& + 4 \mathbb{E}[ \langle \nabla f(x_{t-1}), u_t - u_{t-1} \rangle^2 \|u_t\|^2 ] + 4L_1^2 (d+6)^3 \delta^2. & \nonumber
\end{align}
Since $\langle \nabla f(x_{t-1}), u_t - u_{t-1} \rangle^2 \leq 2 \langle \nabla f(x_{t-1}), u_t \rangle^2 + 2\langle \nabla f(x_{t-1}), u_{t-1} \rangle^2$, we get that
\begin{align}
\label{eqn:BSM_12}
& \mathbb{E}[ \langle \nabla f(x_{t-1}), u_t - u_{t-1} \rangle^2 \|u_t\|^2 ] \leq 2 \mathbb{E}[ \langle \nabla f(x_{t-1}),   & \nonumber \\
& \quad u_t \rangle^2 \|u_t\|^2 ]+ 2 \mathbb{E}[ \langle \nabla f(x_{t-1}), u_{t-1} \rangle^2 \|u_t\|^2 ].
\end{align}
For the term $\mathbb{E}[ \langle \nabla f(x_{t-1}), u_{t-1} \rangle^2 \|u_t\|^2 ]$, we have that $\mathbb{E}[ \langle \nabla f(x_{t-1}), u_{t-1} \rangle^2 \|u_t\|^2 ] \leq \mathbb{E}[ \|\nabla f(x_{t-1})\|^2 \|u_{t-1}\|^2$ $\|u_t\|^2 ] \leq d^2 \mathbb{E}[ \|\nabla f(x_{t-1})\|^2 ]$. For the term $\mathbb{E}[ \langle \nabla f(x_{t-1}),$  $u_t \rangle^2 \|u_t\|^2 ]$, according to Theorem 3 in \cite{nesterov2017random}, we have a stronger bound $\mathbb{E}[ \langle \nabla f(x_{t-1}),$ $u_t \rangle^2 \|u_t\|^2 ] \leq (d + 4)  \mathbb{E}[ \|\nabla f(x_{t-1})\|^2 ]$. Substituting these bounds into \eqref{eqn:BSM_12}, and because $d^2 + d + 4 \leq (d + 4)^2$, we have that
\begin{align}
\label{eqn:BSM_13}
& \mathbb{E}[ \langle \nabla f(x_{t-1}), u_t - u_{t-1} \rangle^2 \|u_t\|^2 ] & \nonumber \\
& \quad \quad \quad \quad \quad  \leq 2(d+4)^2 \mathbb{E}[ \|\nabla f(x_{t-1})\|^2 ].
\end{align}
Substituting the bound \eqref{eqn:BSM_13} into inequality~\eqref{eqn:BSM_11}, we complete the proof.

\section{Proof of Theorem~\ref{thm:Nonconvex_Noiseless}}
\label{sec:NonsmoothNonconvex_Det}
Since we have that $f(x) \in C^{0,0}$, according to Lemma~\ref{lem:GaussianApprox}, the function $f_\delta(x)$ has $L_1(f_\delta)$-Lipschitz continuous gradient where $L_1(f_\delta) = \frac{\sqrt{d}}{\delta} L_0$. Furthermore, according to Lemma 1.2.3 in \cite{nesterov2013introductory}, we can get the following inequality
\begin{align}
\label{eqn:Inequality_1}
& f_\delta(x_{t+1}) \leq f_\delta(x_{t}) +  \langle \nabla f_\delta(x_t), x_{t+1} - x_t \rangle & \nonumber \\
& \quad \quad \quad \quad \quad   + \frac{L_1(f_\delta)}{2} \|x_{t+1} - x_t\|^2 & \nonumber \\
& = f_\delta(x_{t}) - \eta \langle \nabla f_\delta(x_t), \tilde{g}(x_t) \rangle + \frac{L_1(f_\delta) \eta^2}{2} \|\tilde{g} (x_t)\|^2  & \nonumber \\
& = f_\delta(x_{t}) - \eta \langle \nabla f_\delta(x_t), \Delta_t \rangle - \eta \|\nabla f_\delta(x_t)\|^2 & \nonumber \\
& \quad \quad \quad \quad \quad \quad  \quad \quad \quad \quad  \quad + \frac{L_1(f_\delta) \eta^2}{2} \|\tilde{g}(x_t)\|^2,
\end{align}
where $\Delta_t = \tilde{g}(x_t) - \nabla f_\delta(x_t)$. According to Lemma~\ref{lem:UnbiasedEstimate_Noiseless}, we can get that $\mathbb{E}_{u_t} [ \tilde{g}(x_t) ] = \nabla f_\delta(x_t)$. Therefore, taking expectation over $u_t$ on both sides of inequality~\eqref{eqn:Inequality_1} and rearranging terms, we have that
\begin{align}
\label{eqn:Inequality_2}
\eta \mathbb{E}[\|\nabla f_\delta(x_t)\|^2 ] \leq & \mathbb{E}[f_\delta(x_t)] - \mathbb{E}[f_\delta(x_{t+1})] & \nonumber \\
&  + \frac{L_1(f_\delta) \eta^2}{2} \mathbb{E}[ \|\tilde{g}(x_t)\|^2].
\end{align}
Telescoping above inequalities from $t = 0$ to $T-1$ and dividing both sides by $\eta$, we obtain that
\begin{align}
\label{eqn:Inequality_3}
\sum_{t = 0}^{T-1} \mathbb{E}[\|\nabla f_\delta(x_t)\|^2 ] \leq & \frac{\mathbb{E}[f_\delta(x_0)] - \mathbb{E}[f_\delta(x_{T})]}{\eta} & \nonumber \\
& + \frac{L_1(f_\delta) \eta}{2} \sum_{t = 0}^{T-1} \mathbb{E}[\|\tilde{g}(x_t)\|^2] & \nonumber \\
\leq \frac{\mathbb{E}[f_\delta(x_0)] - f_\delta^\ast}{\eta} + & \frac{L_1(f_\delta) \eta}{2} \sum_{t = 0}^{T-1} \mathbb{E}[\|\tilde{g}(x_t)\|^2],
\end{align}
where $f_\delta^\ast$ is the lower bound of the smoothed function $f_\delta(x)$. $f_\delta^\ast$ must exist because we assume the orignal function $f(x)$ is lower bounded and the smoothed function has a bounded distance from $f(x)$ due to Lemma~\ref{lem:GaussianApprox}.

Recall the contraction result of the second moment $\mathbb{E}[ \|\tilde{g}(x_t)\|^2] $ in Lemma~\ref{lem:BoundSecondMoment_Det} when $f(x) \in C^{0,0}$. Denote the contraction rate $\frac{2 d L_0^2 \eta^2}{\delta^2}$ as $\alpha$ and the constant perturbation term $M = 8L_0^2 (d+4)^2$. Then, we get that
\begin{equation}
\mathbb{E}[\|\tilde{g} (x_t)\|^2] \leq \alpha^t \mathbb{E}[\|\tilde{g}(x_0)\|^2] + \frac{1 - \alpha^t}{1 - \alpha} M.
\end{equation}
Summing the above inequality over time, we obtain
\begin{align}
\label{eqn:Inequality_6}
& \sum_{t = 0}^{T-1} \|\tilde{g}(x_t)\|^2 \leq \frac{1 - \alpha^{T}}{1 - \alpha} \mathbb{E}[ \|\tilde{g}(x_{0})\|^2] + \sum_{t = 0}^{T-1} \big( \frac{1 - \alpha^t}{1 - \alpha} M \big) \nonumber \\
& \leq  \frac{1}{1 - \alpha} \mathbb{E}[ \|\tilde{g}(x_{0})\|^2] + \frac{1}{1 - \alpha} M T.
\end{align}
Plugging the bound in \eqref{eqn:Inequality_6} into inequality~\eqref{eqn:Inequality_3}, and since $L_1(f_\delta) = \frac{\sqrt{d}}{\delta} L_0$, we have that
\begin{align}
\label{eqn:Inequality_7}
& \sum_{t = 0}^{T-1} \mathbb{E}[\|\nabla f_\delta(x_t)\|^2 ] \leq \frac{\mathbb{E}[f_\delta(x_0)] - f_\delta^\ast}{\eta} + \frac{d^\frac{1}{2} L_0 \eta}{2\delta} & \nonumber \\
& \quad \quad \big(\frac{1}{1 - \alpha} \mathbb{E}[ \|\tilde{g}(x_{0})\|^2] + \frac{1}{1 - \alpha} 8 L_0^2 (d+4)^2 T\big) .
\end{align}
To fullfill the requirement that $|f(x) - f_\delta(x)| \leq \epsilon_f$, we set the exporation parameter $\delta = \frac{\epsilon_f}{d^\frac{1}{2} L_0}$. In addition, let the stepsize be $\eta = \frac{\sqrt{\epsilon_f} }{  2d L_0^2 T^{\frac{1}{2}}}$. 
We have that $\alpha = \frac{1}{2T \epsilon_f} \leq \frac{1}{2}$ and $\frac{1}{1 - \alpha} \leq 2$, when $T \geq \frac{1}{\epsilon_f}$. Plugging the choices of $\eta$ and $\delta$ into inequality~\eqref{eqn:Inequality_7}, we obtain that
\begin{align}
& \sum_{t = 0}^{T-1} \mathbb{E}[\|\nabla f_\delta(x_t)\|^2 ] \leq 2 L_0^2 \big( \mathbb{E}[f_\delta(x_t)] - f_\delta^\ast \big) \frac{d}{\sqrt{\epsilon_f}}\sqrt{T} & \nonumber \\
& \quad \quad \quad \quad \quad \quad + \frac{1}{2\sqrt{\epsilon_f T}}\mathbb{E}[ \|\tilde{g}(x_{0})\|^2]  + 4L_0^2 \frac{(d+4)^2}{\sqrt{\epsilon_f}} \sqrt{T}. & \nonumber 
\end{align}
Dividing both sides of above inequality by $T$, we complete the proof.

\section{Proof of Theorem~\ref{thm:Nonconvex_NoiselessSmooth}}
\label{sec:SmoothNonconvex_Det}
Following the same process in the beginning of the proof of Theorem~\ref{thm:Nonconvex_Noiseless}, we can get
\begin{equation}
\label{eqn:Pf_3.4_1}
\sum_{t = 0}^{T-1} \mathbb{E}[\|\nabla f_\delta(x_t)\|^2 ] \leq \frac{\mathbb{E}[f_\delta(x_0)] - f_\delta^\ast}{\eta} + \frac{L_1\eta}{2} \sum_{t = 0}^{T-1} \mathbb{E}[ \|\tilde{g}(x_t)\|^2 ].
\end{equation}
Since $\frac{1}{2} \mathbb{E}[ \|\nabla f(x_t)\|^2 ] \leq \mathbb{E}[ \|\nabla f_\delta(x_t)\|^2 ] + \mathbb{E}[ \|\nabla f(x_t) - \nabla f_\delta(x_t) \|^2 ]$, and according to the bound~\eqref{eqn:Pf_3.4_1} and Lemma~\ref{lem:GaussianApprox}, we have that
\begin{align}
\label{eqn:Pf_3.4_2}
	& \frac{1}{2} \sum_{t=0}^{T-1} \|\mathbb{E}[ \|\nabla f(x_t)\|^2 ] \leq \frac{\mathbb{E}[f_\delta(x_0)] - f_\delta^\ast}{\eta} & \nonumber \\
	& \quad \quad \quad  + \frac{L_1\eta}{2} \sum_{t = 0}^{T-1} \mathbb{E}[\|\tilde{g}(x_t)\|^2] + L_1^2 (d+3)^3 \delta^2 T.
\end{align}
In addition, similar to the process to derive the bound in \eqref{eqn:Inequality_6}, according to Lemma~\ref{lem:BoundSecondMoment_Det}, when $f(x) \in C^{1,1}$, we can get that
\begin{align}
\label{eqn:Pf_3.4_3}
	& \sum_{t = 0}^{T-1} \|\tilde{g}(x_t)\|^2  \leq \frac{1}{1 - \alpha}  \mathbb{E}[ \|\tilde{g}(x_0)\|^2 ] + \frac{8}{1 - \alpha} (d+4)^2  & \nonumber \\
	& \quad \quad \quad   \sum_{t = 0}^{T-1} \|\nabla f(x_t)\|^2 + \frac{4}{1 - \alpha} L_1^2 (d+6)^3 \delta^2 T.
\end{align}
Plugging the bound~\eqref{eqn:Pf_3.4_3} into \eqref{eqn:Pf_3.4_2}, we have that 
\begin{align}
\label{eqn:Pf_3.5_4}
& \frac{1}{2} \sum_{t=0}^{T-1} \|\mathbb{E}[ \|\nabla f(x_t)\|^2 ] \leq \frac{\mathbb{E}[f_\delta(x_0)] - f_\delta^\ast}{\eta} & \nonumber \\
& \quad \quad \quad + \frac{L_1\eta}{2} \big( \frac{1}{1 - \alpha}  \mathbb{E}[ \|\tilde{g}(x_0)\|^2 ] + \frac{4}{1 - \alpha} L_1^2 (d+6)^3 \delta^2 T \nonumber \\ 
& \quad \quad \quad + \frac{8}{1 - \alpha} (d+4)^2 \sum_{t = 0}^{T-1} \mathbb{E}[\|\nabla f(x_t)\|^2] \big) & \nonumber \\
& \quad \quad \quad + L_1^2 (d+3)^3 \delta^2 T.
\end{align}
Recalling that $\tilde{L} = \max\{32 L_1, 2L_0\}$, let $\eta = \frac{1}{\tilde{L} (d+4)^2 T^\frac{1}{3}}$ and $\delta = \frac{1}{\sqrt{d} T^\frac{1}{3}}$, and we have that $\alpha = 2d L_0^2 \frac{\eta^2}{\delta^2} \leq \frac{1}{2}$. In addition, the coefficient before the term $ \|\nabla f(x_t)\|^2$ in the upper bound above $\frac{L_1 \eta}{2} \frac{8}{1 - \alpha} (d+4)^2 \leq \frac{1}{4}$. Therefore, we obtain that
\begin{align}
& \frac{1}{4} \sum_{t=0}^{T-1} \|\mathbb{E}[ \|\nabla f(x_t)\|^2 ] \leq \tilde{L} (\mathbb{E}[f_\delta(x_0)] - f_\delta^\ast)(d+4)^2 T^\frac{1}{3}  & \nonumber \\
& \quad \quad \quad + \frac{1}{32 (d+4)^2 T^\frac{1}{3}} \mathbb{E}[ \|\tilde{g}(x_0)\|^2 ] + \frac{L_1^2}{8} \frac{(d+6)^3 }{(d+4)^2 d} & \nonumber \\
& \quad \quad \quad + L_1^2 \frac{(d+3)^3}{d} T^\frac{1}{3}. \nonumber
\end{align}
Dividing both sides of above inequality by $T$, we complete the proof.

\section{Proof of Theorem~\ref{thm:Convex_Noiseless}}
\label{sec:Convex_Det}

First, according to iteration~\eqref{eqn:SGD}, we have that
\begin{align}
	& \|x_{t+1} - x^*\|^2 \leq \|x_t - \eta \widetilde{g}(x_t) -x^*\|^2 & \nonumber \\
	& = \|x_{t} - x^*\|^2 -2\eta \inner{\widetilde{g}(x_t)}{x_t - x^*} + \eta^2 \|\widetilde{g}(x_t)\|^2. \nonumber
\end{align}
Taking expectation on both sides, and since $\mathbb{E} [\tilde{g}(x_t)] = \nabla f_\delta (x_t)$, we obtain that
\begin{align}
\label{eqn:Pf_3.5_1}
\mathbb{E}[ \|x_{t+1} - x^*\|^2 ] \leq & \; \mathbb{E}[ \|x_{t} - x^\ast\|^2 ] -2\eta \langle \nabla f_\delta(x_t) , x_t - x^\ast \rangle & \nonumber \\
&  + \eta^2 \mathbb{E}[\|\tilde{g}(x_t)\|^2]. & 
\end{align}
Due to the convexity, we have that $\langle \nabla f_\delta(x_t) , x_t - x^\ast \rangle \geq f_\delta(x_t) - f_\delta(x^\ast)$. Plugging this inequality into \eqref{eqn:Pf_3.5_1}, we have that
\begin{align}
\label{eqn:Pf_3.5_2}
\mathbb{E}[\|x_{t+1} - x^*\|^2] \leq & \; \mathbb{E}[\|x_{t} - x^\ast\|^2] -2\eta (f_\delta(x_t) - f_\delta(x^\ast)) & \nonumber \\
& + \eta^2 \mathbb{E}[ \|\tilde{g}(x_t)\|^2 ]. & 
\end{align}
When $f(x) \in C^{0,0}$, using Lemma~\eqref{lem:GaussianApprox}, we can replace $f_\delta(x)$ with $f(x)$ in above inequality and get
\begin{align}
\mathbb{E}[ \|x_{t+1} - x^*\|^2 ] \leq & \; \mathbb{E}[ \|x_{t} - x^\ast\|^2 ] -2\eta (f(x_t) - f(x^\ast)) & \nonumber \\
& + \eta^2 \mathbb{E}[ \|\tilde{g}(x_t)\|^2 ] + 4  L_0 \sqrt{d} \delta \eta. & \nonumber
\end{align}
Rearranging the terms and telescoping from $t = 0$ to $T-1$, we obtain that
\begin{align}
	& \sum_{t = 0}^{T-1} \mathbb{E}[f(x_t)] - T f(x^\ast) \leq \frac{1}{2\eta} (\|x_{0} - x^\ast\|^2 - \mathbb{E}[ \|x_{T} - x^\ast\|^2 ]) & \nonumber \\
	& \quad \quad \quad \quad \quad \quad \quad \quad \quad \quad \quad + \frac{\eta}{2} \sum_{t = 0}^{T-1} \mathbb{E}[ \|\tilde{g}(x_t)\|^2 ] + 2 L_0 \sqrt{d} \delta T & \nonumber \\
	& \leq \frac{1}{2\eta}  \|x_{0} - x^\ast\|^2  + \frac{\eta}{2} \sum_{t = 0}^{T-1} \mathbb{E}[ \|\tilde{g}(x_t)\|^2 ] + 2 L_0 \sqrt{d} \delta T & \nonumber
\end{align}
Since function $f(x) \in C^{0,0}$, we can plug the bound~\eqref{eqn:Inequality_6} into the above inequality and get that
\begin{align}
	& \sum_{t = 0}^{T-1} \mathbb{E}[ f(x_t)] - T f(x^\ast) \leq \frac{1}{2\eta}  \|x_{0} - x^\ast\|^2  & \nonumber \\
	& + \frac{\eta}{2 (1-\alpha)} \mathbb{E}[\|\tilde{g}(x_0)\|^2] + \frac{4 \eta}{1-\alpha} L_0^2 (d+4)^2 T + 2 L_0 \sqrt{d} \delta T. & \nonumber
\end{align}
Let $\eta = \frac{1}{2 d L_0 \sqrt{T}}$ and $\delta = \frac{1}{\sqrt{T}}$. We have that $\alpha = 2dL_0^2 \frac{\eta^2}{\delta^2} = \frac{1}{2d} \leq \frac{1}{2}$. Therefore, $\frac{1}{1 - \alpha} \leq 2$. Applying this bound and the choice of $\eta$ and $\delta$ into above inequality, we have that
\begin{align}
	& \sum_{t = 0}^{T-1} \mathbb{E}[f(x_t)] - T f(x^\ast) \leq L_0 \|x_{0} - x^\ast\|^2 d\sqrt{T}  & \nonumber \\
	& + \frac{1}{2 d L_0 \sqrt{T}} \mathbb{E}[\|\tilde{g}(x_0)\|^2] + 4 L_0 \frac{(d + 4)^2}{d} \sqrt{T} + 2L_0 \sqrt{d} \sqrt{T}. & \nonumber 
\end{align}
Recalling that $f(\bar{x}) \leq \frac{1}{T}\sum_{t = 0}^{T-1} f(x_t)$ due to  convexity and dividing both sides of above inequality by $T$, the proof of the nonsmooth case is complete.

When function $f(x) \in C^{1,1}$, it is straightforward to see that we also have the inequality~\eqref{eqn:Pf_3.5_2}. In addition, according to Lemma~\ref{lem:GaussianApprox}, we can replace $f_\delta(x)$ with $f(x)$ in above inequality and get
\begin{align}
\label{eqn:Pf_3.5_5}
& \mathbb{E}[ \|x_{t+1} - x^*\|^2 ] \leq \mathbb{E}[\|x_{t} - x^\ast\|^2] -2\eta (f(x_t) - f(x^\ast)) & \nonumber \\
& \quad \quad \quad \quad \quad \quad \quad \quad + \eta^2 \mathbb{E}[\|\tilde{g}(x_t)\|^2] + 4  L_1 d \delta^2 \eta. 
\end{align}
Similarly to the above analysis, we telescope the above inequality from $t = 0$ to $T-1$, apply the bound on $\sum_{t = 0}^{T-1} \mathbb{E}[\|\tilde{g}(x_t)\|^2]$ in \eqref{eqn:Pf_3.4_3} and obtain that
\begin{align}
	& \sum_{t = 0}^{T-1} \mathbb{E}[f(x_t)] - T f(x^\ast) \leq \; \frac{1}{2\eta}  \|x_{0} - x^\ast\|^2 & \nonumber \\
	& \quad \quad \quad + \frac{\eta}{2 (1-\alpha)} \mathbb{E}[\|\tilde{g}(x_0)\|^2]  + \frac{2 \eta}{1-\alpha} L_1^2 (d+6)^3 \delta^2 T & \nonumber \\
	&  \quad \quad \quad + \frac{4 \eta}{1-\alpha} (d+4)^2 \sum_{t = 0}^{T-1} \mathbb{E}[\| \nabla f(x_t)\|^2] + 2 L_1 d \delta^2 T. & \nonumber 
\end{align}
Since $f(x) \in C^{1,1}$ is convex, we have that $\| \nabla f(x_t)\|^2 \leq 2L_1 (f(x_t) - f(x^\ast))$ according to (2.1.7) in \cite{nesterov2013introductory}. Applying this bound into the above inequality, we get that
\begin{align}
& \sum_{t = 0}^{T-1} \mathbb{E}[f(x_t)] - T f(x^\ast) \leq \frac{1}{2\eta}  \|x_{0} - x^\ast\|^2  & \nonumber \\
&  + \frac{\eta}{2 (1-\alpha)} \mathbb{E}[\|\tilde{g}(x_0)\|^2]  + \frac{2 \eta}{1-\alpha} L_1^2 (d+6)^3 \delta^2 T  & \nonumber \\
& + \frac{8\eta}{1-\alpha} L_1 (d+4)^2 \big( \sum_{t = 0}^{T-1} \mathbb{E}[f(x_t)] - T f(x^\ast) \big) + 2 L_1 d \delta^2 T. & \nonumber
\end{align}
Let $\eta = \frac{1}{2 \tilde{L} (d+4)^2 T^{\frac{1}{3}}}$ and $\delta = \frac{\sqrt{d}}{T^{\frac{1}{3}}}$ where $\tilde{L} = \max\{L_0, 16L_1\}$. Then, we have that $\alpha = 2dL_0^2 \frac{\eta^2}{\delta^2} \leq \frac{1}{2(d+4)^4} \leq \frac{1}{2}$. In addition, we have that $\frac{8\eta}{1-\alpha} L_1 (d+4)^2 \leq \frac{1}{2 T^\frac{1}{3}} \leq \frac{1}{2}$. Applying these two bounds into above inequality and rearranging terms, we have that
\begin{align}
& \frac{1}{2} \sum_{t = 0}^{T-1} \mathbb{E}[f(x_t)] - T f(x^\ast) \leq  \tilde{L} \|x_{0} - x^\ast\|^2 (d+4)^2 T^\frac{1}{3} & \nonumber \\
& +  \frac{1}{2 \tilde{L} (d+4)^2 T^{\frac{1}{3}}} \mathbb{E}[\|\tilde{g}(x_0)\|^2]  + \frac{L_1}{8} \frac{(d+6)^3d}{(d+4)^2} + 2L_1 d^2 T^\frac{1}{3}. & \nonumber
\end{align}
Recalling that $f(\bar{x}) \leq \frac{1}{T}\sum_{t = 0}^{T-1} f(x_t)$ due to  convexity and dividing both sides of above inequality by $T$, the proof of the smooth case is complete.

\section{Proof of Lemma~\ref{lem:BoundSecondMoment_Stoch}}
\label{sec:SecondMomentBound_Stoch}
The analysis is similar to the proof in Section~\ref{sec:BoundSecondMoment_Det}. First, consider the case when $F(x, \xi) \in C^{0,0}$ with $L_0(\xi)$. According to \eqref{eqn:GradientEstimate_Noise}, we have that
\begin{align}
& \mathbb{E}[\|\tilde{g} (x_t)\|^2] & \nonumber \\
&  = \mathbb{E}[\frac{1}{\delta^2} \big(F(x_t + \delta u_t, \xi_t) - F(x_{t-1} + \delta u_{t-1}, \xi_{t-1})\big)^2 \|u_t\|^2]  & \nonumber \\
& \leq \frac{2}{\delta^2} \mathbb{E}[ \big(F(x_t + \delta u_t, \xi_t) - F(x_{t-1} + \delta u_{t-1}, \xi_t)\big)^2 \|u_t\|^2] \nonumber \\
& + \frac{2}{\delta^2} \mathbb{E}[ \big( F(x_{t-1} + \delta u_{t-1}, \xi_t) - F(x_{t-1} + \delta u_{t-1}, \xi_{t-1})\big)^2 \|u_t\|^2]. \nonumber
\end{align}
Using the bound in Assumption~\ref{asmp:BoundedVariance}, we get that $\frac{2}{\delta^2} \mathbb{E}[ \big( F(x_{t-1} + \delta u_{t-1}, \xi_t) - F(x_{t-1} + \delta u_{t-1}, \xi_{t-1})\big)^2 \|u_t\|^2] \leq \frac{8 d \sigma^2}{\delta^2}$. In addition, adding and subtracting $F(x_{t-1} + \delta u_{t}, \xi_{t})$ in $\big(F(x_t + \delta u_t, \xi_t) - F(x_{t-1} + \delta u_{t-1}, \xi_t)\big)^2$ in above inequality, we obtain that
\begin{align}
& \mathbb{E}[\|\tilde{g} (x_t)\|^2]\leq  \frac{8 d \sigma^2}{\delta^2} + & \nonumber \\
& \frac{4}{\delta^2} \mathbb{E}[ \big(F(x_t + \delta u_t, \xi_t) - F(x_{t-1} + \delta u_{t}, \xi_t)\big)^2 \|u_t\|^2] & \nonumber \\
& + \frac{4}{\delta^2} \mathbb{E}[ \big( F(x_{t-1} + \delta u_{t}, \xi_t) - F(x_{t-1} + \delta u_{t-1}, \xi_{t})\big)^2 \|u_t\|^2] & \nonumber 
\end{align}
Using Assumption~\ref{asmp:BoundedLipschitz}, we can bound the last two items on the right hand side of above inequality following the same procedure after inequality~\eqref{eqn:BSM_2} and get that 
\begin{equation*}
	\mathbb{E}[\|\tilde{g} (x_t)\|^2] \leq  \frac{4 d L_0^2 \eta^2}{\delta^2} \mathbb{E}[ \|\tilde{g}(x_{t-1})\|^2] + 16L_0^2 (d+4)^2 + \frac{8 d \sigma^2}{\delta^2}.
\end{equation*}
The proof is complete.

\section{Proof of Theorem~\ref{thm:NoisyOnline_Nonconvex}}
\label{sec:Proof_NoisyOnlineNonconvex}
When function $F(x) \in C^{0,0}$ with $L_0(\xi)$, using Assumption~\ref{asmp:BoundedLipschitz} and following the same procedure in Section~\ref{sec:NonsmoothNonconvex_Det}, we have that
\begin{align}
\label{eqn:Pf_4.4_1}
\sum_{t = 0}^{T-1} \mathbb{E}[\|\nabla f_\delta(x_t)\|^2 ]  & \leq \frac{\mathbb{E}[f_\delta(x_0)] - f_\delta^\ast}{\eta} & \nonumber \\
& + \frac{L_1(f_\delta) \eta}{2} \sum_{t = 0}^{T-1} \mathbb{E}[\|\tilde{g}(x_t)\|^2], 
\end{align}
where $L_1(f_\delta) = \frac{\sqrt{d}}{\delta} L_0$. 
In addition, according to Lemma~\ref{lem:BoundSecondMoment_Stoch}, we get that
\begin{align}
	\label{eqn:Pf_4.4_2}
	\sum_{t = 0}^{T-1} \mathbb{E}[  \|\tilde{g}(x_t)\|^2 ] \leq \frac{1}{1 - \alpha} \mathbb{E}[ & \|\tilde{g}(x_{0})\|^2] + \frac{16 L_0^2}{1 - \alpha} (d+4)^2 T & \nonumber \\
	& + \frac{8 \sigma^2}{1 - \alpha}\frac{d}{\delta^2} T, 
\end{align}
where $\alpha =  \frac{4 d L_0^2 \eta^2}{\delta^2}$.
Plugging \eqref{eqn:Pf_4.4_2} into the bound in \eqref{eqn:Pf_4.4_1}, we obtain that
\begin{align}
\label{eqn:Pf_4.4_3}
& \sum_{t = 0}^{T-1} \mathbb{E}[\|\nabla f_\delta(x_t)\|^2 ]  \leq  \frac{\mathbb{E}[f_\delta(x_0)] - f_\delta^\ast}{\eta} + \frac{4 \sigma^2 L_0}{1 - \alpha} d^{1.5} \frac{\eta}{\delta^3} T & \nonumber \\
& + \frac{\sqrt{d} L_0}{2(1 - \alpha)} \mathbb{E}[ \|\tilde{g}(x_{0})\|^2] \frac{\eta}{\delta}  + \frac{8 L_0^3 \sqrt{d}}{1 - \alpha} (d+4)^2 \frac{\eta}{\delta} T . &
\end{align}
Similar to Section~\ref{sec:NonsmoothNonconvex_Det}, to fullfill the requirement that $|f(x) - f_\delta(x)| \leq \epsilon_f$, we set the exporation parameter $\delta = \frac{\epsilon_f}{d^\frac{1}{2} L_0}$. In addition, let the stepsize be $\eta = \frac{\epsilon_f^{1.5} }{  2\sqrt{2} L_0^2 d^{1.5} T^{\frac{1}{2}}}$. 
Then, we have that $\alpha =  \frac{4 d L_0^2 \eta^2}{\delta^2} = \frac{\epsilon_f}{2 d T}\leq \frac{1}{2}$ when $T \geq \frac{1}{d \epsilon_f}$. Therefore, we have that $\frac{1}{1 - \alpha} \leq 2$. Applying this bound and the choices of $\eta$ and $\delta$ into the bound~\eqref{eqn:Pf_4.4_3}, we obtain that 
\begin{align}
& \sum_{t = 0}^{T-1} \mathbb{E}[\|\nabla f_\delta(x_t)\|^2 ]  \leq  2\sqrt{2} L_0^2 (\mathbb{E} [ f_\delta(x_0) ] - f_\delta^\ast ) \frac{d^{1.5} \sqrt{T}}{\epsilon_f^{1.5}} & \nonumber \\
& + \frac{L_0 \epsilon_f^{0.5} }{2 \sqrt{2dT}} \mathbb{E}[ \|\tilde{g}(x_{0})\|^2]  + 4 \sqrt{2} L_0^2 \frac{(d+4)^2}{\sqrt{d}} \sqrt{\epsilon_f T} & \nonumber \\
& +  2\sqrt{2} \sigma^2 L_0^2 \frac{d^{1.5} \sqrt{T}}{\epsilon_f^{1.5}}. & \nonumber 
\end{align}
Dividing both sides by $T$, the proof for the nonsmooth case is complete.

When function $F(x, \xi) \in C^{1,1}$ with $L_1(\xi)$, according to Assumption~\ref{asmp:BoundedLipschitz}, we also have that $f_\delta(x), f(x) \in C^{1,1}$ with constant $L_1$.  Similarly to the proof in Section~\ref{sec:SmoothNonconvex_Det},  we get that 
\begin{align}
\label{eqn:Pf_4.4_5}
& \frac{1}{2} \sum_{t=0}^{T-1} \|\mathbb{E}[ \|\nabla f(x)\|^2 \|] \leq \frac{\mathbb{E}[f_\delta(x_0)] - f_\delta^\ast}{\eta} & \nonumber \\
& + \frac{L_1\eta}{2} \sum_{t = 0}^{T-1} \mathbb{E}[\|\tilde{g}(x_t)\|^2] + L_1^2 (d+3)^3 \delta^2 T.
\end{align}
Plugging inequality~\eqref{eqn:Pf_4.4_2} into the above upper bound, we obtain that
\begin{align}
	& \frac{1}{2} \sum_{t=0}^{T-1} \|\mathbb{E}[ \|\nabla f(x)\|^2 \|] \leq  \frac{\mathbb{E}[f_\delta(x_0)] - f_\delta^\ast}{\eta} & \nonumber \\
	& \quad \quad \quad + \frac{L_1\eta}{2(1 - \alpha)} \mathbb{E}[ \|\tilde{g}(x_{0})\|^2] + \frac{8 L_0^2 L_1 }{1 - \alpha} (d+4)^2 \eta T  & \nonumber \\
	& \quad \quad \quad + \frac{4 L_1 \sigma^2}{1 - \alpha}\frac{d\eta}{\delta^2} T + L_1^2 (d+3)^3 \delta^2 T.
\end{align}
Let $\eta = \frac{1}{2 \sqrt{2} L_{0} d^{\frac{4}{3}} T^{\frac{2}{3}}}$ and $\delta = \frac{1}{ d^{\frac{5}{6}} T^\frac{1}{6}}$. Then, $\alpha = \frac{4 d L_0^2 \eta^2}{\delta^2} = \frac{1}{2 T} \leq \frac{1}{2}$ and $\frac{1}{1 - \alpha} \leq 2$. Plugging these results into the above inequality, we get that
\begin{align}
& \frac{1}{2} \sum_{t=0}^{T-1} \|\mathbb{E}[ \|\nabla f(x)\|^2 \|] \leq  2\sqrt{2} L_0 (\mathbb{E}[f_\delta(x_0)] - f_\delta^\ast) d^{\frac{4}{3}} T^\frac{2}{3} & \nonumber \\
& + \frac{L_1}{2 \sqrt{2} L_{0} d^{\frac{4}{3}} T^{\frac{2}{3}}} \mathbb{E}[ \|\tilde{g}(x_{0})\|^2]   + 4\sqrt{2} L_0 L_1 \frac{(d+4)^2}{d^\frac{4}{3}} T^\frac{1}{3} & \nonumber \\ 
&+ \frac{2\sqrt{2} L_1 \sigma^2}{L_0 d^\frac{1}{3}} T^\frac{1}{3} + L_1^2 \frac{(d+3)^3}{d^\frac{5}{3}}  T^\frac{2}{3}.
\end{align}
Dividing both sides by $T$, the proof for the smooth case is complete.

\section{Proof of Theorem~\ref{thm:NoisyOnline_Convex}}
\label{sec:Proof_NoisyOnlineConvex}

When the function $f(x) \in C^{0,0}$ with constant $L_0(\xi)$ is convex, we can follow the same procedure as in Section~\ref{sec:Convex_Det} and get that
\begin{align}
	\sum_{t = 0}^{T-1} \mathbb{E}[f(x_t)] - T f(x^\ast) & \leq \frac{1}{2\eta}  \|x_{0} - x^\ast\|^2  & \nonumber \\
	& + \frac{\eta}{2} \sum_{t = 0}^{T-1} \mathbb{E}[\|\tilde{g}(x_t)\|^2] + 2 L_0 \sqrt{d} \delta T. & \nonumber
\end{align}
Plugging the bound \eqref{eqn:Pf_4.4_2} into above inequality, we have that
\begin{align}
& \sum_{t = 0}^{T-1} \mathbb{E}[f(x_t)] - T f(x^\ast) \leq  \frac{1}{2\eta}  \|x_{0} - x^\ast\|^2  & \nonumber \\
& \quad \quad \quad \quad + \frac{\eta}{2(1 - \alpha)} \mathbb{E}[ \|\tilde{g}(x_{0})\|^2] + \frac{8 L_0^2}{1 - \alpha} (d+4)^2 \eta T & \nonumber \\
& \quad \quad \quad \quad + \frac{4 \sigma^2}{1 - \alpha}\frac{d\eta}{\delta^2} T + 2 L_0 \sqrt{d} \delta T. &
\end{align}
Let $\eta = \frac{1}{2\sqrt{2} L_0 \sqrt{d} T^{\frac{3}{4}}}$ and $\delta = \frac{1}{T^{\frac{1}{4}}}$. Then, we have that $\alpha = \frac{4 d L_0^2 \eta^2}{\delta^2} = \frac{1}{2T} \leq \frac{1}{2}$. Plugging these results into the above inequality, we get that
\begin{align}
& \sum_{t = 0}^{T-1} \mathbb{E}[f(x_t)] - T f(x^\ast) \leq \sqrt{2} L_0 \|x_{0} - x^\ast\|^2 \sqrt{d} T^\frac{3}{4} & \nonumber \\
& + \frac{1}{2\sqrt{2} L_0 \sqrt{d} T^{\frac{3}{4}}} \mathbb{E}[ \|\tilde{g}(x_{0})\|^2] + 4\sqrt{2} L_0 \frac{(d+4)^2}{\sqrt{d}} T^\frac{1}{4} & \nonumber \\
&  + \frac{2\sqrt{2} \sigma^2}{L_0} \sqrt{d} T^\frac{3}{4} + 2 L_0 \sqrt{d} T^\frac{3}{4}.
\end{align}
Dividing both sides by $T$, the proof for the nonsmooth case is complete.

When the function $f(x) \in C^{1,1}$ with constant $L_1(\xi)$, we can also get the inequality~\eqref{eqn:Pf_3.5_5} in Section~\ref{sec:Convex_Det}. Telescoping this inequality from $t = 0$ to $T-1$ and rearranging terms, we obtain
\begin{align}
& \sum_{t = 0}^{T-1} \mathbb{E}[f(x_t)] - T f(x^\ast) \leq \frac{1}{2\eta}  \|x_{0} - x^\ast\|^2  & \nonumber \\
& \quad \quad \quad \quad  \quad \quad + \frac{\eta}{2} \sum_{t = 0}^{T-1} \mathbb{E}[\|\tilde{g}(x_t)\|^2] + 2 L_1 d \delta^2 T.
\end{align}
Plugging the bound \eqref{eqn:Pf_4.4_2} into above inequality, we have that
\begin{align}
& \sum_{t = 0}^{T-1} \mathbb{E}[f(x_t)] - T f(x^\ast) \leq \frac{1}{2\eta}  \|x_{0} - x^\ast\|^2  + 2 L_1 d \delta^2 T  & \nonumber \\
& + \frac{\eta}{2(1 - \alpha)} \mathbb{E}[ \|\tilde{g}(x_{0})\|^2] + \frac{8 L_0^2}{1 - \alpha} (d+4)^2 \eta T + \frac{4 \sigma^2}{1 - \alpha}\frac{d\eta}{\delta^2} T . & \nonumber
\end{align}
Let $\eta = \frac{1}{2\sqrt{2} L_0 d^\frac{2}{3} T^{\frac{2}{3}}}$ and $\delta = \frac{1}{  d^\frac{1}{6} T^{\frac{1}{6}}}$. Then, we have that $\alpha = \frac{4 d L_0^2 \eta^2}{\delta^2} = \frac{1}{2T} \leq \frac{1}{2}$. Plugging these parameters into above inequality, we get that
\begin{align}
& \sum_{t = 0}^{T-1} \mathbb{E}[f(x_t)] - T f(x^\ast) \leq \sqrt{2} L_0 \|x_{0} - x^\ast\|^2 d^\frac{2}{3} T^\frac{2}{3} & \nonumber \\
&  + \frac{1}{2\sqrt{2} L_0 d^\frac{2}{3} T^{\frac{2}{3}}} \mathbb{E}[ \|\tilde{g}(x_{0})\|^2] + 4\sqrt{2} L_0 \frac{(d+4)^2}{d^\frac{2}{3}}T^\frac{1}{3} & \nonumber \\
& + \frac{2\sqrt{2} \sigma^2}{L_0} d^\frac{2}{3} T^\frac{2}{3} + 2 L_1  d^\frac{2}{3} T^\frac{2}{3}. \nonumber 
\end{align}
Dividing both sides by $T$, the proof for the smooth case is complete.

\section{Analysis of SGD with Mini-batch Residual Feedback}
\label{sec:Proof_Minibatch}
In this section, we analyze the query complexity of SGD with the mini-batch residual feedback. First, we make some additional assumptions.
 
\begin{asmp}
	\label{asmp:BoundedVariance_Gradient}
	When function $F(x, \xi) \in C^{1,1}$, we assume that
	\begin{equation*}
		\label{eqn:BoundedVariance}
		\mathbb{E}_\xi [ \| \nabla F(x, \xi) - \mathbb{E}[ \nabla F(x, \xi)] \|^2 ] \leq \sigma_g^2.
	\end{equation*}
\end{asmp}
Before presenting the main results, we first establish some important lemmas. 
The following lemma provides a characterization for the estimation variance of the estimator $\widetilde{g}_b(x_t)$.

\begin{lem}\label{le:variance11}
	When function $F(x, \xi) \in C^{0,0}$ with constant $L_0(\xi)$, given Assumptions~\ref{asmp:BoundedVariance} and \ref{asmp:BoundedLipschitz}, we have that
	\begin{align}
	& \mathbb{E} \|\widetilde g_b(x_t)\|^2  \leq  \frac{4(d+2)L_0^2}{\delta^2}\mathbb{E}\|x_t-x_{t-1}\|^2 & \nonumber \\
	& \quad \quad \quad \quad \quad \quad + 16L_0^2(d+4)^2  + \frac{8(d+2)\sigma^2}{\delta^2 b}.
	\end{align}
	
	Furthermore, when function $F(x, \xi) \in C^{1,1}$ with constant $L_1(\xi)$, given Assumptions~\ref{asmp:BoundedVariance}, \ref{asmp:BoundedLipschitz} and \ref{asmp:BoundedVariance_Gradient}, we have that 
	\begin{align*}
	& \mathbb{E} \|\widetilde g_b(x_t)\|^2 \leq 12L^2_1\delta^2(d+6)^3  + \frac{6 (d+2) L_0^2\eta^2}{\delta^2}\mathbb{E} \|\widetilde g_b(x_{t-1})\|^2 \nonumber
	\\ & \quad \quad \quad \quad\quad   +24(d+4)\mathbb{E}(\|\nabla f(x_t) \|^2+ \|\nabla f(x_{t-1})\|^2)  & \nonumber \\
	& \quad \quad \quad \quad \quad + \frac{48(d+4)\sigma_g^2}{b} + \frac{8(d+2)\sigma^2}{\delta^2 b}.
	\end{align*}	
\end{lem}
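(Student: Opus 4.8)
The plan is to mirror the two second-moment estimates already established in the appendix --- the proof of Lemma~\ref{lem:BoundSecondMoment_Stoch} for the $C^{0,0}$ case and the $C^{1,1}$ part of the proof of Lemma~\ref{lem:BoundSecondMoment_Det} --- the only genuinely new feature being that the batch averaging must suppress the stochastic contributions by a factor $1/b$. Writing $\bar F_t = \frac{1}{b}\sum_{j=1}^b F(x_t+\delta u_t,\xi_j)$ and $\bar F'_{t-1}=\frac{1}{b}\sum_{j=1}^b F(x_{t-1}+\delta u_{t-1},\xi'_j)$, we have $\widetilde g_b(x_t)=\tfrac{u_t}{\delta}(\bar F_t-\bar F'_{t-1})$ and hence $\mathbb{E}\|\widetilde g_b(x_t)\|^2 = \frac{1}{\delta^2}\mathbb{E}[(\bar F_t-\bar F'_{t-1})^2\|u_t\|^2]$. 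The decisive first step is to insert the auxiliary quantity $\bar F''_{t-1}=\frac{1}{b}\sum_{j=1}^b F(x_{t-1}+\delta u_{t-1},\xi_j)$, namely the previous iterate evaluated with the \emph{current} batch $\xi_{1:b}$, and to apply $(a+b)^2\le 2a^2+2b^2$:
\begin{equation*}
(\bar F_t-\bar F'_{t-1})^2 \le 2(\bar F_t-\bar F''_{t-1})^2 + 2(\bar F''_{t-1}-\bar F'_{t-1})^2 .
\end{equation*}
The first summand is a difference of two batch averages taken over the \emph{same} samples, so all data randomness cancels pairwise and only the change of point survives; the second summand is a difference of two \emph{independent} batch averages evaluated at the common point $x_{t-1}+\delta u_{t-1}$, i.e.\ pure sampling noise.

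For the $C^{0,0}$ bound I would treat the same-sample term $\bar F_t-\bar F''_{t-1}$ exactly as in the proof of Lemma~\ref{lem:BoundSecondMoment_Det}: bound each summand by $L_0\|(x_t-x_{t-1})+\delta(u_t-u_{t-1})\|$ via Assumption~\ref{asmp:BoundedLipschitz}, apply Jensen over the batch, and split into an $\|x_t-x_{t-1}\|^2$ piece and a $\delta^2\|u_t-u_{t-1}\|^2$ piece. Using independence of $u_t$ from $x_t-x_{t-1}$ together with $\mathbb{E}\|u_t\|^2=d$ and $\mathbb{E}[\|u_t-u_{t-1}\|^2\|u_t\|^2]\le 4(d+4)^2$ (Lemma~1 in \cite{nesterov2017random}) then yields the first two terms of the claimed bound. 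For the noise term, the summands $F(x_{t-1}+\delta u_{t-1},\xi_j)-F(x_{t-1}+\delta u_{t-1},\xi'_j)$ are, conditionally on $u_{t-1}$, independent and mean-zero across $j$ with second moment at most $4\sigma^2$ (the consequence of Assumption~\ref{asmp:BoundedVariance} noted after it); hence the batch average has second moment at most $4\sigma^2/b$, and multiplying by the independent factor $\|u_t\|^2/\delta^2$ produces the stated term $\tfrac{8(d+2)\sigma^2}{\delta^2 b}$. This is precisely where the factor $1/b$ enters.

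The $C^{1,1}$ bound follows the same first split, so the function-value noise term $\tfrac{8(d+2)\sigma^2}{\delta^2 b}$ is unchanged. On the same-sample difference I would Taylor-expand each batch average about its own base iterate, $\bar F_t$ about $x_t$ and $\bar F''_{t-1}$ about $x_{t-1}$, writing $F(x+\delta u,\xi_j)=F(x,\xi_j)+\langle \nabla F(x,\xi_j),\delta u\rangle+r_j$ with $|r_j|\le\tfrac{1}{2} L_1\delta^2\|u\|^2$ from the $C^{1,1}$ inequality (Assumption~\ref{asmp:BoundedLipschitz}). The zeroth-order difference $\frac{1}{b}\sum_j(F(x_t,\xi_j)-F(x_{t-1},\xi_j))$ is Lipschitz in $x_t-x_{t-1}$ and, after substituting the update rule, reproduces the contraction term $\tfrac{6(d+2)L_0^2\eta^2}{\delta^2}\mathbb{E}\|\widetilde g_b(x_{t-1})\|^2$, while the remainders $r_j$ accumulate into $12L_1^2\delta^2(d+6)^3$ through $\mathbb{E}\|u\|^6\le(d+6)^3$. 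The linear parts involve the batch-averaged gradients $\frac{1}{b}\sum_j\nabla F(x_t,\xi_j)$ and $\frac{1}{b}\sum_j\nabla F(x_{t-1},\xi_j)$, which is exactly why both $\nabla f(x_t)$ and $\nabla f(x_{t-1})$ appear; decomposing each as $\nabla f(\cdot)$ plus a zero-mean fluctuation whose squared norm is at most $\sigma_g^2/b$ by Assumption~\ref{asmp:BoundedVariance_Gradient} and independence across the batch, and then applying the moment identity $\mathbb{E}[\langle g,u\rangle^2\|u\|^2]=(d+2)\|g\|^2$, splits the linear contribution into the signal terms $24(d+4)\mathbb{E}(\|\nabla f(x_t)\|^2+\|\nabla f(x_{t-1})\|^2)$ and the gradient-noise term $\tfrac{48(d+4)\sigma_g^2}{b}$. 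The main obstacle is the bookkeeping: one must isolate the independent-sample difference and the gradient fluctuation \emph{before} invoking any Gaussian moment identity, so that the factor $1/b$ attaches only to the genuinely stochastic pieces and never to the Lipschitz or gradient signal terms; once this separation is clean, the remaining estimates are the same moment computations used in the earlier lemmas.
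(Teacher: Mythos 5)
Your proposal is correct and follows essentially the same route as the paper's proof: the same key insertion of the previous point evaluated at the current batch $\xi_{1:b}$, the same Lipschitz/variance split for the $C^{0,0}$ case, and the same Taylor expansion of each batch evaluation about its own base iterate with the batch gradient decomposed into $\nabla f(\cdot)$ plus a zero-mean fluctuation for the $C^{1,1}$ case. The only differences are cosmetic bookkeeping: you work with batch averages instead of sums, pair $\xi_j$ with $\xi'_j$ in the noise term rather than centering both batches at $f(x_{t-1}+\delta u_{t-1})$, and invoke the exact Gaussian moments $\mathbb{E}\|u\|^2=d$ and $\mathbb{E}[\langle g,u\rangle^2\|u\|^2]=(d+2)\|g\|^2$ where the paper uses the slightly looser bounds $(d+2)$ and $(d+4)$ from \cite{nesterov2017random}, all of which still yield the stated constants.
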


\begin{pf}
	When function $F(x, \xi) \in C^{0,0}$, based on the definition of $\widetilde g(x_t)$, we have
	\begin{align*}
 \|& \widetilde g_b(x_t) \|^2 = \frac{1}{\delta^2 b^2}|F(x_t+\delta u_t, \xi_{1:b}) -F(x_{t-1}+\delta u_{t-1},\xi_{1:b}) & \nonumber \\
 &  + F(x_{t-1}+\delta u_{t-1};\xi_{1:b}) - F(x_{t-1}+\delta u_{t-1},\xi_{1:b}')  |^2 \|u_t\|^2\nonumber \\
  \leq& \frac{2}{\delta^2 b^2}\big(|F(x_t+\delta u_t,\xi_{1:b}) - F(x_{t-1}+\delta u_{t-1},\xi_{1:b})|^2 + & \nonumber \\
  &   |F(x_{t-1}+\delta u_{t-1},\xi_{1:b})-F(x_{t-1}+\delta u_{t-1},\xi_{1:b}')  |^2 \big)\|u_t\|^2 \nonumber
	\\ \leq &\frac{4L_0^2}{\delta^2}\|x_t-x_{t-1}\|^2\|u_t\|^2+4L_0^2\|u_{t}- u_{t-1}\|^2\|u_t\|^2  & \nonumber \\ 
	& +\frac{2}{\delta^2 b^2} |F(x_{t-1} +\delta u_{t-1},\xi_{1:b})-F(x_{t-1}+\delta u_{t-1},\xi_{1:b}')  |^2\|u_t\|^2. & \nonumber
	\end{align*}
	Taking expectation over the above inequality yields
	\begin{align}\label{pqo}
	& \mathbb{E} \|\widetilde g_b(x_t)\|^2 \nonumber \\
	& \leq \frac{4L_0^2}{\delta^2}\mathbb{E}\big(\|x_t-x_{t-1}\|^2\|u_t\|^2\big)  +4L_0^2\mathbb{E}\big(\|u_{t}- u_{t-1}\|^2\|u_t\|^2 \big) \nonumber \\
	& \quad +\frac{2}{\delta^2 b^2} \mathbb{E}\big(|F(x_{t-1}+\delta u_{t-1},\xi_{1:b}) \nonumber \\
	& \quad \quad \quad \quad \quad - F(x_{t-1}+\delta u_{t-1},\xi_{1:b}')  |^2\|u_t\|^2 \big)\nonumber
	\\ & \leq  \frac{4L_0^2}{\delta^2}\mathbb{E}\big(\|x_t-x_{t-1}\|^2\mathbb{E}_{u_t}\|u_t\|^2 \big) \nonumber \\
	& \quad \quad \quad \quad \quad  + 8L_0^2\mathbb{E}\big(\|u_{t}\|^4+\|u_{t-1}\|^2\|u_t\|^2 \big) \nonumber
		\\& \quad  +\frac{2}{\delta^2} \mathbb{E}\big(|F(x_{t-1}+\delta u_{t-1},\xi_{1:b}) \nonumber
		\\ 
		&\quad \quad \quad \quad \quad  -F(x_{t-1}+\delta u_{t-1},\xi_{1:b}')  |^2\|u_t\|^2 \big) \nonumber \\
		& \overset{(i)}\leq  \frac{4(d+2)L_0^2}{\delta^2}\mathbb{E}\|x_t-x_{t-1}\|^2 + 8L_0^2\big((d+4)^2+(d+2)^2 \big) \nonumber
			\\& \quad +\frac{2}{\delta^2 b^2} \mathbb{E}\big(|F(x_{t-1}+\delta u_{t-1},\xi_{1:b}) \nonumber
			\\ 
			&\quad \quad \quad \quad \quad  -F(x_{t-1}+\delta u_{t-1},\xi_{1:b}')  |^2\|u_t\|^2 \big)  \nonumber \\
			&\leq \frac{4(d+2)L_0^2}{\delta^2}\mathbb{E}\|x_t-x_{t-1}\|^2 + 16L_0^2(d+4)^2 \nonumber \\
			& +\frac{4}{\delta^2b^2}\underbrace{\mathbb{E}|F(x_{t-1}+\delta u_{t-1},\xi_{1:b})- b f(x_{t-1}+\delta u_{t-1})|^2\|u_t\|^2}_{(P)} \nonumber
			\\&+ \frac{4}{\delta^2b^2}\underbrace{\mathbb{E}|F(x_{t-1}+\delta u_{t-1},\xi_{1:b}')- b f(x_{t-1}+\delta u_{t-1})|^2\|u_t\|^2}_{(Q)},
	\end{align}
	where (i) follows from Lemma 1 in \cite{nesterov2017random} that $\mathbb{E}\|u\|^p\leq (d+p)^{p/2}$ for a $d$-dimensional standard Gaussian random vector. 
Our next step is to upper-bound $(P)$ and $(Q)$ in the above inequality. For $(P)$,	conditioning on $x_{t-1}$ and $u_{t-1}$ and noting that $\xi_{1:b}$ is independent of $u_t$, we have 
\begin{align}\label{p1}
& (P) = \mathbb{E}_{\xi_{1:b}}\Big| \sum_{\xi\in \xi_{1:b}}\big(F(x_{t-1}+\delta u_{t-1},\xi) \nonumber \\
& \quad \quad \quad \quad \quad \quad \quad \quad \quad -f(x_{t-1}+\delta u_{t-1})\big)\Big|^2\mathbb{E}_{u_t}\|u_t\|^2 \nonumber
\\ \leq & \; (d+2)\mathbb{E}_{\xi_{1:b}}\Big|  \sum_{\xi\in \xi_{1:b}}\big(F(x_{t-1}+\delta u_{t-1},\xi) \nonumber \\
& \quad \quad \quad \quad \quad \quad \quad \quad \quad -f(x_{t-1}+\delta u_{t-1})\big)\Big|^2 \nonumber
\\ = &\; b(d+2) \mathbb{E}_{\xi}|F(x_{t-1}+\delta u_{t-1},\xi)-f(x_{t-1}+\delta u_{t-1})|^2  \nonumber
\\& + (d+2) \sum_{i\neq j,\xi_i,\xi_j\in \xi_{1:b}}  \langle \mathbb{E}_{\xi_i}F(x_{t-1}+\delta u_{t-1},\xi_i) \nonumber \\
& -f(x_{t-1}+\delta u_{t-1}),\mathbb{E}_{\xi_j} F(x_{t-1}+\delta u_{t-1},\xi_j) \nonumber \\
& -f(x_{t-1}+\delta u_{t-1})\rangle \nonumber
\\= & \; b(d+2) \mathbb{E}_\xi\|F(x_{t-1}+\delta u_{t-1},\xi) \nonumber \\
& -f(x_{t-1}+\delta u_{t-1})\|^2 \leq  b(d+2)\sigma^2.
\end{align}
Unconditioning on $x_{t-1}$ and $u_{t-1}$ in the above equality yields $(P)\leq b(d+2)\sigma^2$. 
 For the term $(Q)$, we have 
 \begin{align}
 & (Q) = \; \mathbb{E}\big(|F(x_{t-1}+\delta u_{t-1},\xi_{1:b}')- b f(x_{t-1}+\delta u_{t-1})|^2 \nonumber \\
 & \quad \quad \quad \quad \quad  \quad \quad \quad \quad \quad \quad \quad  \mathbb{E}_{u_t}\|u_t\|^2\, \big| x_{t-1},\xi_{1:b}',u_{t-1}\big) \nonumber
 \\ & \leq (d+2) \mathbb{E}|F(x_{t-1}+\delta u_{t-1},\xi_{1:b}')- b f(x_{t-1}+\delta u_{t-1})|^2,\nonumber
 \end{align}
 which, using an approach similar to the steps in~\eqref{p1}, yields
 \begin{align}\label{q1}
 (Q)\leq b(d+2)\sigma^2.
 \end{align}
 Combining~\eqref{pqo},~\eqref{p1} and~\eqref{q1} yields the proof when $F(x, \xi) \in C^{0,0}$.
 
 When function $F(x, \xi) \in C^{1,1}$, based on the definition of $\widetilde g_b(x_t)$, we have 
 \begin{align*}
 & \|\widetilde g_b(x_t)\|^2 = \frac{1}{\delta^2 b^2}|F(x_t+\delta u_t, \xi_{1:b}) -F(x_{t-1}+\delta u_{t-1},\xi_{1:b}) & \nonumber \\
 &  +F(x_{t-1}+\delta u_{t-1},\xi_{1:b})-F(x_{t-1}+\delta u_{t-1},\xi_{1:b}')  |^2 \|u_t\|^2 & \nonumber
 \\ & \leq \frac{2}{\delta^2 b^2} |F(x_t+\delta u_t,\xi_{1:b}) -F(x_{t-1}+\delta u_{t-1},\xi_{1:b})|^2 \|u_t\|^2 & \nonumber \\
 & + \frac{2}{\delta^2 b^2} |F(x_{t-1}+\delta u_{t-1},\xi_{1:b})-F(x_{t-1}+\delta u_{t-1},\xi_{1:b}')  |^2 \|u_t\|^2, \nonumber
 \end{align*}
 which, taking expectation and using an approach similar to~\eqref{pqo}, yields 
 \begin{align}\label{opiis}
 & \mathbb{E} \|\widetilde g(x_t)\|^2 \leq \frac{8(d+2)\sigma^2}{\delta^2 b} + \nonumber \\
 & \frac{2}{\delta^2 b^2} \underbrace{\mathbb{E}|F(x_t+\delta u_t,\xi_{1:b}) -F(x_{t-1}+\delta u_{t-1},\xi_{1:b})|^2 \|u_t\|^2}_{(P)}. 
 \end{align}
 Our next step is to upper-bound $(P)$ in the above inequality. We first divide $(P)$ into three parts such that $(P) \leq 3 \mathbb{E}\big[ (P_1) + (P_2) + (P_3) \big]$, where
 \begin{align*}
 	(P_1) = & |F(x_t + \delta u_t, \xi_{1:b}) -F(x_t, \xi_{1:b}) - \nonumber \\
 	& \inner{\delta u_t}{\nabla F(x_t,\xi_{1:b})} + \inner{\delta u_t}{\nabla F(x_t,\xi_{1:b})}|^2\|u_t\|^2, \\
 	(P_2) = & |F(x_t,\xi_{1:b}) - F(x_{t-1},\xi_{1:b})|^2\|u_t\|^2, \text{ and } \\
 	(P_3) = & |F(x_{t-1},\xi_{1:b}) - F(x_{t-1} + \delta u_{t-1},\xi_{1:b}) \nonumber \\
 	& + \inner{\delta u_{t-1}}{\nabla F(x_{t-1},\xi_{1:b})} -\inner{\delta u_{t-1}}{\nabla F(x_{t-1},\xi_{1:b})}|^2\|u_t\|^2
 \end{align*} 
 Using the assumption that $F(x;\xi)\in C^{0,0}\cap C^{1,1}$, we have
 \begin{align}
 (P_1) & \le b^2 L^2_1\delta^4\|u_t\|^6 + 2\delta^2| \inner{u_t}{\nabla F(x_t,\xi_{1:b})} |^2\|u_t\|^2, \nonumber\\
 (P_2) &\le b^2 L_0^2  \|x_{t} - x_{t-1}\|^2\|u_t\|^2, \nonumber\\ 
 (P_3) &\le b^2 L^2_1\delta^4\|u_{t-1}\|^4\|u_t\|^2  +2\delta^2 | \inner{u_{t-1}}{\nabla F(x_{t-1},\xi_{1:b})}|^2\|u_t\|^2. \nonumber
 \end{align}
 Plugging the above inequalities into $(P) \leq 3 \mathbb{E}\big[ (P_1) + (P_2) + (P_3) \big]$, we have
 \begin{align}\label{fanren}
 (P) \leq \; &3 b^2 L^2_1\delta^4\mathbb{E}\|u_t\|^6 + 6\delta^2\mathbb{E}| \inner{u_t}{\nabla F(x_t,\xi_{1:b})} |^2\|u_t\|^2 \nonumber \\
 &  + 3 b^2 L_0^2\mathbb{E} \|x_t-x_{t-1}\|^2\|u_t\|^2 \nonumber
 \\ &+ 3 b^2 L^2_1\delta^4\mathbb{E}\|u_{t-1}\|^4\|u_t\|^2  \nonumber \\
 & +6\delta^2 \mathbb{E}| \inner{u_{t-1}}{\nabla F(x_{t-1},\xi_{1:b})}|^2\|u_t\|^2, 
 \end{align}
 Based on the results in~\cite{nesterov2017random}, we have  $\mathbb{E}_u[\|u\|^p]\le (d+p)^{p/2}$, $ \mathbb{E}[\inner{u_t}{\nabla F(x_t,\xi_{1:b})}^2\|u_t\|^2] \le (d+4) \|\nabla F(x_t,\xi_{1:b})\|^2$, $\mathbb{E}[\inner{u_{t-1}}{\nabla F(x_{t-1},\xi_{1:b})}^2] \le \|\nabla F(x_t,\xi_{1:b})\|^2$, which, in conjunction with \eqref{fanren}, yields
 \begin{align}\label{ggsimida}
 & (P)\leq 6 b^2 L^2_1\delta^4(d+6)^3  + 3 b^2 (d+2) L_0^2\mathbb{E} \|x_t-x_{t-1}\|^2 \nonumber
 \\ & \quad \quad \quad \quad  +6(d+4)\delta^2\mathbb{E}\|\nabla F(x_t,\xi_{1:b}) \|^2 \nonumber \\
 & \quad \quad \quad \quad  +6(d+2)\delta^2 \mathbb{E}\|\nabla F(x_{t-1},\xi_{1:b})\|^2 \nonumber
 \\ & \overset{(i)}\leq 6 b^2 L^2_1\delta^4(d+6)^3  + 3 b^2 (d+2) L_0^2\mathbb{E} \|x_t-x_{t-1}\|^2 \nonumber \\
 & \quad \quad +12 b^2 (d+4)\delta^2\mathbb{E}\|\nabla f(x_t) \|^2  +12 b (d+4)\delta^2\sigma_g^2 \nonumber
 \\ & \quad \quad +12 b^2 (d+2)\delta^2 \mathbb{E}\|\nabla f(x_{t-1})\|^2 + 12 b (d+2)\delta^2\sigma_g^2 \nonumber
 \\  & \leq 6 b^2 L^2_1\delta^4(d+6)^3  + 3 b^2 (d+2) L_0^2\mathbb{E} \|x_t-x_{t-1}\|^2 \nonumber
 \\ & \quad \quad \quad +12 b^2 (d+4)\delta^2\mathbb{E}\|\nabla f(x_t) \|^2 + 24 b (d+4)\delta^2\sigma_g^2 \nonumber \\
 & \quad \quad \quad +12 b^2 (d+2)\delta^2 \mathbb{E}\|\nabla f(x_{t-1})\|^2.  
 \end{align}
 Combining~\eqref{ggsimida} and \eqref{opiis} yields
 \begin{align}
 & \mathbb{E} \|\widetilde g_b(x_t)\|^2 \leq 12L^2_1\delta^2(d+6)^3  + \frac{6 (d+2) L_0^2\eta^2}{\delta^2}\mathbb{E} \|\widetilde g_b(x_{t-1})\|^2 \nonumber
 \\ &  +24(d+4)\mathbb{E}(\|\nabla f(x_t) \|^2+ \|\nabla f(x_{t-1})\|^2) + \frac{48(d+4)\sigma_g^2}{b} \nonumber \\
 & + \frac{8(d+2)\sigma^2}{\delta^2 b},
 \end{align}
 which finishes the proof. 
\end{pf}

First, we analyze the convergence when the problem is non-smooth. 
Based on Lemma~\ref{le:variance11}, we provide an upper bound on $\mathbb{E}\|x_{t+1}-x_t\|^2$.
\begin{lem}\label{le:online_ind}
Suppose Assumptions~\ref{asmp:BoundedVariance} and \ref{asmp:BoundedLipschitz} are satisfied. Then, we have 
\begin{align}
\mathbb{E}\|x_{t+1}-x_t\|^2 \leq \beta_1^t\Big( \mathbb{E}\|x_{1}-x_{0}\|^2 -\frac{\beta_2}{1-\beta_1}\Big)+\frac{\beta_2}{1-\beta_1}, \nonumber
\end{align}
where $\beta_1 =\frac{4\eta^2(d+2)L_0^2}{\delta^2} $ and $\beta_2 = 16\eta ^2 L_0^2(d+4)^2  +\frac{8\eta^2(d+2)\sigma^2}{\delta^2 b}$. 
\end{lem}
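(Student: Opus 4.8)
The plan is to reduce the claim to a scalar affine recursion for the quantity $a_t := \mathbb{E}\|x_{t+1}-x_t\|^2$ and then solve that recursion in closed form. First I would invoke the SGD update rule in \eqref{eqn:SGD}, now driven by the mini-batch estimator $\widetilde g_b$, to write $x_{t+1}-x_t = -\eta\,\widetilde g_b(x_t)$, so that $\|x_{t+1}-x_t\|^2 = \eta^2\|\widetilde g_b(x_t)\|^2$ pointwise and hence $a_t = \eta^2\,\mathbb{E}\|\widetilde g_b(x_t)\|^2$ after taking expectations.

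Next I would apply the $C^{0,0}$ bound from Lemma~\ref{le:variance11}, which already expresses $\mathbb{E}\|\widetilde g_b(x_t)\|^2$ in terms of $\mathbb{E}\|x_t-x_{t-1}\|^2 = a_{t-1}$. Multiplying that inequality through by $\eta^2$ and collecting the $\delta$- and $b$-dependent constants gives directly the affine recursion $a_t \le \beta_1 a_{t-1} + \beta_2$, valid for every $t\ge 1$, with $\beta_1 = \tfrac{4\eta^2(d+2)L_0^2}{\delta^2}$ and $\beta_2 = 16\eta^2 L_0^2(d+4)^2 + \tfrac{8\eta^2(d+2)\sigma^2}{\delta^2 b}$ exactly as in the statement. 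The final step is to unroll this recursion $t$ times (equivalently, a one-line induction on $t$), which yields
\[
a_t \;\le\; \beta_1^t a_0 + \beta_2\sum_{k=0}^{t-1}\beta_1^k \;=\; \beta_1^t\Big(a_0 - \tfrac{\beta_2}{1-\beta_1}\Big) + \tfrac{\beta_2}{1-\beta_1},
\]
using the finite geometric sum $\sum_{k=0}^{t-1}\beta_1^k = (1-\beta_1^t)/(1-\beta_1)$; substituting $a_0 = \mathbb{E}\|x_1-x_0\|^2$ recovers the asserted bound.

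I do not anticipate a genuine obstacle, since the result is essentially a mechanical consequence of Lemma~\ref{le:variance11}. The only points deserving a little care are bookkeeping of the indices, so that the base case $a_0 = \mathbb{E}\|x_1-x_0\|^2$ aligns with the recursion starting at $t=1$, and the fact that the geometric summation presumes $\beta_1 \neq 1$; the intended regime is $\beta_1 < 1$, which will be guaranteed by the eventual choices of $\eta$ and $\delta$ and which also makes the $\beta_1^t$ term decay so that $a_t$ stays controlled by the stationary level $\beta_2/(1-\beta_1)$.
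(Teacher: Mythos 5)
Your proposal is correct and follows exactly the paper's own argument: both reduce the claim to the affine recursion $\mathbb{E}\|x_{t+1}-x_t\|^2 \leq \beta_1\,\mathbb{E}\|x_t-x_{t-1}\|^2 + \beta_2$ via the update $x_{t+1}-x_t=-\eta\,\widetilde g_b(x_t)$ and the $C^{0,0}$ bound of Lemma~\ref{le:variance11}, then unroll it with the geometric sum. Your write-up is in fact slightly more careful than the paper's (which just says ``telescoping yields the proof''), since you make the base case $a_0=\mathbb{E}\|x_1-x_0\|^2$ and the $\beta_1\neq 1$ caveat explicit.
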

\begin{pf}
Based on the update that $x_{t+1}-x_t=-\eta \widetilde g_b(x_t)$ and Lemma~\ref{le:variance11}, we have 
\begin{align*}
& \mathbb{E}\|x_{t+1}-x_t\|^2 =\eta^2 \|\widetilde g_b(x_t)\|^2 \\
& \leq \eta^2 \Big( \frac{4(d+2)L_0^2}{\delta^2}\mathbb{E}\|x_t-x_{t-1}\|^2 + 16L_0^2(d+4)^2  \nonumber \\
&  \quad \quad \quad \quad \quad \quad \quad \quad \quad \quad \quad \quad \quad \quad \quad + \frac{8(d+2)\sigma^2}{\delta^2 b}  \Big)
\\ & =\frac{4\eta^2(d+2)L_0^2}{\delta^2} \mathbb{E}\|x_t-x_{t-1}\|^2 +16\eta ^2 L_0^2(d+4)^2 \nonumber \\
& \quad \quad \quad \quad \quad \quad \quad \quad \quad \quad \quad \quad \quad \quad \quad  +\frac{8\eta^2(d+2)\sigma^2}{\delta^2 b}
\\ & = \beta_1 \mathbb{E}\|x_t-x_{t-1}\|^2+\beta_2.
\end{align*}
Then, telescoping the above inequality yields the proof. 
\end{pf}
\subsection*{Nonsmooth Nonconvex Geometry}
Based on the above lemmas, we next provide the  convergence and complexity analysis for our proposed algorithm for the case where $F(x;\xi)$ is nonconvex and belongs to $C^{0,0}$. 
\begin{thm}
Suppose Assumptions~\ref{asmp:BoundedVariance} and \ref{asmp:BoundedLipschitz} are satisfied. Choose $\eta= \frac{\epsilon_f^{1/2}}{2(d+2)^{3/2}T^{1/2}L^2_0}, \delta = \frac{\epsilon_f}{(d+2)^{1/2}L_0}$ and $b=\frac{\sigma^2}{\epsilon_f^2}\geq 1$ for certain $\epsilon_f<1$.  Then, we have 
$\mathbb{E}\|\nabla f_\delta(x_\zeta)\|^2
\leq \mathcal{O}\left( \frac{d^{3/2}}{\epsilon_f^{1/2}\sqrt{T}} \right) $ with the approximation error $|f_\delta (x_\zeta)-f(x_\zeta)|<\theta$, where $\zeta$ is uniformly sampled from $\{0, 1, \dots, T-1 \}$. 
Then, to achieve an $\epsilon$-accurate stationary point of  $f_\delta$, the corresponding total function query complexity is given by 
\begin{align}
Tb =  \mathcal{O}\left( \frac{\sigma^2 d^{3}}{\epsilon_f^{3}\epsilon^{2}} \right).
\end{align}
\end{thm}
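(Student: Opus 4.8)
The plan is to combine a standard nonconvex descent argument on the Gaussian-smoothed surrogate $f_\delta$ with the mini-batch second-moment recursion already established. Since $F(\cdot,\xi)\in C^{0,0}$ uniformly, so is $f$, and Lemma~\ref{lem:GaussianApprox} together with Lemma~\ref{lem:SmoothedFunctionLipschitiz} guarantee that $f_\delta$ is differentiable with $L_1(f_\delta)=\sqrt{d}\,\delta^{-1}L_0$-Lipschitz gradient. Hence the descent inequality $f_\delta(x_{t+1})\le f_\delta(x_t)-\eta\langle\nabla f_\delta(x_t),\tilde g_b(x_t)\rangle+\tfrac{L_1(f_\delta)\eta^2}{2}\|\tilde g_b(x_t)\|^2$ holds along the SGD trajectory. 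Taking expectations, using unbiasedness $\mathbb{E}[\tilde g_b(x_t)]=\nabla f_\delta(x_t)$, telescoping from $t=0$ to $T-1$ and dividing by $\eta$, I would obtain
\begin{equation*}
\sum_{t=0}^{T-1}\mathbb{E}\|\nabla f_\delta(x_t)\|^2\le\frac{f_\delta(x_0)-f_\delta^\ast}{\eta}+\frac{L_1(f_\delta)\eta}{2}\sum_{t=0}^{T-1}\mathbb{E}\|\tilde g_b(x_t)\|^2,
\end{equation*}
exactly as in the proof of Theorem~\ref{thm:Nonconvex_Noiseless}, where $f_\delta^\ast$ exists because $f$ is bounded below and $f_\delta$ stays within $\delta L_0\sqrt d$ of $f$.

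The second step is to control $\sum_t\mathbb{E}\|\tilde g_b(x_t)\|^2$. Since $x_{t+1}-x_t=-\eta\,\tilde g_b(x_t)$, I have $\mathbb{E}\|\tilde g_b(x_t)\|^2=\eta^{-2}\mathbb{E}\|x_{t+1}-x_t\|^2$, so Lemma~\ref{le:online_ind} applies directly: summing its geometric bound over $t$ and using $\beta_1<1$ gives $\sum_{t=0}^{T-1}\mathbb{E}\|x_{t+1}-x_t\|^2\le\tfrac{1}{1-\beta_1}\mathbb{E}\|x_1-x_0\|^2+\tfrac{\beta_2}{1-\beta_1}T$. Dividing by $\eta^2$ and recalling $\beta_2/\eta^2=16L_0^2(d+4)^2+8(d+2)\sigma^2\delta^{-2}b^{-1}$ turns this into a bound on $\sum_t\mathbb{E}\|\tilde g_b(x_t)\|^2$ whose dominant piece is linear in $T$, plus a lower-order constant $\mathbb{E}\|\tilde g_b(x_0)\|^2$.

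The decisive step is substituting the prescribed $\eta,\delta,b$. With $\delta^2=\epsilon_f^2(d+2)^{-1}L_0^{-2}$ and $b=\sigma^2\epsilon_f^{-2}$, the variance term collapses: $8(d+2)\sigma^2\delta^{-2}b^{-1}=8(d+2)^2L_0^2$, i.e. the dangerous $\delta^{-2}$ blow-up is cancelled exactly by the batch size, leaving $\beta_2/\eta^2=\mathcal{O}(d^2L_0^2)$. One then checks $\beta_1=\tfrac{4\eta^2(d+2)L_0^2}{\delta^2}=\tfrac{1}{(d+2)T\epsilon_f}\le\tfrac12$ in the stated regime for $T$, so $\tfrac{1}{1-\beta_1}\le2$. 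Using $L_1(f_\delta)\eta=\mathcal{O}\big(\sqrt d\,(d+2)^{-1}T^{-1/2}\epsilon_f^{-1/2}\big)$ and $\eta^{-1}=\mathcal{O}\big((d+2)^{3/2}T^{1/2}L_0^2\epsilon_f^{-1/2}\big)$, both the $\eta^{-1}$ term and the $L_1(f_\delta)\eta\cdot d^2T$ term are of order $d^{3/2}\sqrt T\,\epsilon_f^{-1/2}$, while the $\mathbb{E}\|\tilde g_b(x_0)\|^2$ remainder is lower order. Dividing by $T$ and invoking the uniform sampling of $x_\zeta$ yields $\mathbb{E}\|\nabla f_\delta(x_\zeta)\|^2=\mathcal{O}\big(d^{3/2}\epsilon_f^{-1/2}T^{-1/2}\big)$; the approximation error follows from $|f_\delta(x_\zeta)-f(x_\zeta)|\le\delta L_0\sqrt d=\sqrt d(d+2)^{-1/2}\epsilon_f\le\epsilon_f$ (so one may take $\theta=\epsilon_f$). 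For the complexity, setting the rate equal to $\epsilon$ forces $T=\mathcal{O}\big(d^3\epsilon_f^{-1}\epsilon^{-2}\big)$, and multiplying by $b=\sigma^2\epsilon_f^{-2}$ gives $Tb=\mathcal{O}\big(\sigma^2d^3\epsilon_f^{-3}\epsilon^{-2}\big)$.

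The main obstacle I anticipate is not any single inequality but the parameter bookkeeping in the decisive step: one must track the $d$- and $\epsilon_f$-powers through $\eta$, $\delta$, $b$, $L_1(f_\delta)$ and $\beta_1,\beta_2$ simultaneously and verify that the batch size $b=\sigma^2/\epsilon_f^2$ is precisely what neutralizes the $\sigma^2\delta^{-2}$ term, the very term that, left uncontrolled, ruins one-point feedback. Confirming $\beta_1\le\tfrac12$ under the stated regime for $T$ and checking that the two leading terms share the same $d^{3/2}\sqrt T\epsilon_f^{-1/2}$ order are the places where a sign or exponent slip would be easy.
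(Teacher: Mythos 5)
Your proposal is correct and follows essentially the same route as the paper's own proof: a descent inequality on $f_\delta$ with Lipschitz constant $\sqrt{d}\,\delta^{-1}L_0$, unbiasedness of $\tilde g_b$, the geometric recursion of Lemma~\ref{le:online_ind} (equivalently, the second-moment bound via $x_{t+1}-x_t=-\eta\tilde g_b(x_t)$), telescoping, and then verifying $\beta_1\le\tfrac12$ and that $b=\sigma^2/\epsilon_f^2$ exactly cancels the $\sigma^2\delta^{-2}$ term — which is also the paper's key bookkeeping step. The only cosmetic difference is that you sum the geometric bound before substituting into the descent inequality, whereas the paper substitutes term-by-term and then telescopes; the computations are identical.
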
 
\begin{pf}
	Recall that $f_\delta(x)=\mathbb{E}_{u}f(x+\delta u)$ is a smoothed approximation of  $f(x)$, where $u\in\mathbb{R}^d$ is a standard Gaussian random vector. Based on Lemma 2 in~\cite{nesterov2017random},  we have $f_\delta \in C^{1,1}$ with gradient-Lipschitz constant $L_\delta$ satisfying $L_\delta\leq \frac{d^{1/2}}{\delta} L_0$, and thus 
\begin{align*}
& f_\delta(x_{t+1}) \leq f_\delta(x_t) +\langle \nabla f_\delta(x_t), x_{t+1}-x_t\rangle +\frac{L_\delta}{2} \|x_{t+1}-x_t\|^2
\\ & \leq f_\delta(x_t) +\langle \nabla f_\delta(x_t), x_{t+1}-x_t\rangle +\frac{d^{1/2}L_0}{2\delta} \|x_{t+1}-x_t\|^2
\\ & = f_\delta(x_t) -\eta \langle \nabla f_\delta(x_t), \widetilde g(x_t)\rangle + \frac{d^{1/2}L_0}{2\delta} \|x_{t+1}-x_t\|^2.
\end{align*}
Taking expectation over the above inequality and using $\mathbb{E}(\widetilde g(x_t) | x_t)=\nabla f_\delta(x_t)$, we have
\begin{align*}
\mathbb{E} f_\delta (x_{t+1}) \leq & \; \mathbb{E} f_\delta (x_t) -\eta \mathbb{E}\|\nabla f_\delta(x_t)\|^2 \nonumber \\
& \quad \quad \quad \quad \quad + \frac{d^{1/2}L_0}{2\delta} \mathbb{E}\|x_{t+1}-x_t\|^2, \nonumber
\end{align*}
which, in conjunction with Lemma~\ref{le:online_ind}, yields
\begin{align}
& \mathbb{E} f_\delta (x_{t+1}) \leq \mathbb{E} f_\delta (x_t) -\eta \mathbb{E}\|\nabla f_\delta(x_t)\|^2  \nonumber \\
& + \frac{d^{1/2}L_0}{2\delta} \beta_1^t\Big( \mathbb{E}\|x_{1}-x_{0}\|^2 -\frac{\beta_2}{1-\beta_1}\Big)+\frac{d^{1/2}L_0}{2\delta} \frac{\beta_2}{1-\beta_1}. \nonumber
\end{align}
Telescoping the above inequality over $t$ from $0$ to $T-1$ yields
\begin{align*}
& \sum_{t=0}^{T-1}\eta \mathbb{E}\|\nabla f_\delta (x_t)\|^2 & \nonumber \\
& \leq f_\delta(x_0)- \inf_x f_\delta(x) + \frac{d^{1/2}L_0 T}{2\delta} \frac{\beta_2}{1-\beta_1} \\
& \quad \quad \quad  + \frac{d^{1/2}L_0}{2\delta} \Big( \mathbb{E}\|x_{1}-x_{0}\|^2 -\frac{\beta_2}{1-\beta_1}\Big)\sum_{t=0}^{T-1}\beta_1^t
\\ & = f_\delta(x_0)- \inf_x f_\delta(x) + \frac{d^{1/2}L_0 T}{2\delta} \frac{\beta_2}{1-\beta_1} \\
&  \quad \quad \quad + \frac{d^{1/2}L_0}{2\delta} \Big( \mathbb{E}\|x_{1}-x_{0}\|^2 -\frac{\beta_2}{1-\beta_1}\Big)\frac{1-\beta_1^T}{1-\beta_1}
\\ & \leq  f(x_0)- \inf_x f(x) + 2 \delta L_0 d^{1/2}+ \frac{d^{1/2}L_0 T}{2\delta} \frac{\beta_2}{1-\beta_1} \\
& \quad \quad \quad  + \frac{d^{1/2}L_0}{2\delta} \Big( \mathbb{E}\|x_{1}-x_{0}\|^2 -\frac{\beta_2}{1-\beta_1}\Big)\frac{1-\beta_1^T}{1-\beta_1},
\end{align*}
where the last inequality follows from Equation (3.11) in~\cite{ghadimi2013stochastic} and Equation (18) in~\cite{nesterov2017random}. Choose $\eta= \frac{\epsilon_f^{1/2}}{2(d+2)^{3/2}T^{1/2}L^2_0}$ and $\delta = \frac{\epsilon_f}{(d+2)^{1/2}L_0}$ with certain $\epsilon_f<1$, and set $T>\frac{1}{2\epsilon_f d}$. 
Then, we have $\beta_1 = \frac{1}{\epsilon_f(d+2)T}<\frac{1}{2}$, 
and thus the above inequality yields 
\begin{align}
\sum_{t=0}^{T-1}\eta \mathbb{E}\|& \nabla f_\delta (x_t)\|^2 \leq f(x_0)- \inf_x f(x) + 2 \delta L_0 d^{1/2} \nonumber \\
& + \frac{\beta_2 d^{1/2}L_0 T}{\delta} + \frac{d^{1/2}L_0\eta^2}{\delta}\mathbb{E}\|\widetilde g_b(x_0)\|^2. \nonumber
\end{align}
Choosing $\zeta$  from $0,1,...,T-1$ uniformly at random, and rearranging  the above inequality, we have
\begin{align}
& \mathbb{E}\|\nabla f_\delta(x_\zeta)\|^2 \leq \frac{f(x_0)- \inf_x f(x)  }{\eta T} +\frac{2 \delta L_0 d^{1/2} }{\eta T} \nonumber \\
& \quad \quad \quad \quad \quad \quad  + \frac{d^{1/2}L_0\eta}{\delta T}\mathbb{E}\|\widetilde g(x_0)\|^2  + \frac{16\eta (d+4)^2d^{1/2}L_0^3 }{\delta}  \nonumber \\
& \quad \quad \quad  \quad \quad \quad +   \frac{ 8\eta (d+2)d^{1/2}L_0 \sigma^2}{\delta^3 b} \nonumber \\
\leq & \mathcal{O}\left( \frac{1}{\eta T} + \frac{\delta L_0 d^{1/2}}{\eta T} +   \frac{d^{1/2}L_0\eta}{\delta T} + \frac{\eta d^{5/2}L_0^3 }{\delta}    +   \frac{ \eta d^{3/2}L_0 \sigma^2}{\delta^3 b}\right), \nonumber
\end{align}
which, in conjunction with $\eta= \frac{\epsilon_f^{1/2}}{2(d+2)^{3/2}T^{1/2}L^2_0}, \delta = \frac{\epsilon_f}{(d+2)^{1/2}L_0}$ and $b=\frac{\sigma^2}{\epsilon_f^2}$, yields
\begin{align}
& \mathbb{E}\|\nabla f_\delta(x_\zeta)\|^2 & \nonumber \\
& \leq \mathcal{O}\left( \frac{1}{\eta T} + \frac{\delta L_0 d^{1/2}}{\eta T } +   \frac{d^{1/2}L_0\eta}{\delta T} + \frac{\eta d^{5/2}L_0^3 }{\delta}    +   \frac{ \eta d^{3/2}L_0 \sigma^2}{\delta^3 b}\right) \nonumber
\\ & \leq \mathcal{O} \left(  \frac{d^{3/2}L^2_0}{\epsilon_f^{1/2}\sqrt{T}} + \frac{\epsilon_f^{1/2}d^{3/2}L_0^2}{\sqrt{T}} + \frac{1}{\epsilon_f^{1/2}d^{1/2}T^{3/2}} + \frac{d^{3/2}\sigma^2}{\epsilon_f^{5/2}\sqrt{T} b}\right) \nonumber \\
& \leq \mathcal{O}\left( \Big(1+\frac{\sigma^2}{\epsilon_f^{2}b}\Big)\frac{d^{3/2}}{\epsilon_f^{1/2}\sqrt{T}} \right) \leq \mathcal{O}\left( \frac{d^{3/2}}{\epsilon_f^{1/2}\sqrt{T}} \right). \nonumber
\end{align}
Based on $ \delta = \frac{\epsilon_f}{(d+2)^{1/2}L_0}$ and Equation (18) in~\cite{nesterov2017random}, we have $|f_\delta(x)-f(x)| < \epsilon_f$. Then, to achieve an $\epsilon$-accurate stationary point of the smoothed function $f_\delta$ with approximation error $|f_\delta(x)-f(x)| <\epsilon_f, \epsilon_f<1$ , we need 
$T\leq \mathcal{O}(\epsilon_f^{-1}d^{3}\epsilon^{-2})$, and thus the corresponding total function query complexity is given by 
\begin{align}
Tb =  \mathcal{O}\left( \frac{\sigma^2 d^{3}}{\epsilon_f^{3}\epsilon^{2}} \right).
\end{align}
Then, the proof is complete. 
	\end{pf}
\subsection*{Nonsmooth Convex Geometry}
In this part, we provide the convergence and complexity  analysis for our proposed algorithm for the case where $F(x;\xi)$ is convex and belongs to $C^{0,0}$. 
\begin{thm}
Suppose Assumptions~\ref{asmp:BoundedVariance} and \ref{asmp:BoundedLipschitz} are satisfied and $\mathbb{E}\|\widetilde g_b(x_{0})\|^2\leq M d^2 T$ for certain constant $M>0$. Choose $\eta =\frac{1}{(d+2)\sqrt{T}L_0}$, $\delta = \frac{(d+2)^{1/2}}{\sqrt{T}}$, $b=\frac{\sigma^2 T}{d^2}$, and $T>d^2$.  Then, we have 
$\mathbb{E}\big(f(x_\zeta) - \inf_x f(x) \big) 
\leq  \mathcal{O}\big(\frac{d}{\sqrt{T}}\big) $.
Then, to achieve an $\epsilon$-accurate solution of  $f(x)$, the corresponding total function query complexity is given by 
\begin{align}
Tb = \mathcal{O} \Big( \frac{\sigma^2 d^2}{\epsilon^{4}}\Big).
\end{align}
	\end{thm}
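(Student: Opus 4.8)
The plan is to reuse the convex SGD template from the proofs of Theorem~\ref{thm:Convex_Noiseless} and Theorem~\ref{thm:NoisyOnline_Convex}, but to control the accumulated second moment through the recursive displacement estimate of Lemma~\ref{le:online_ind} rather than through a uniform variance bound. First I would expand the squared distance to the optimum along the update $x_{t+1}=x_t-\eta\widetilde g_b(x_t)$, take expectations, and use the unbiasedness $\mathbb{E}[\widetilde g_b(x_t)]=\nabla f_\delta(x_t)$ to obtain
\begin{equation*}
\mathbb{E}[\norm{x_{t+1}-x^\ast}^2]\leq \mathbb{E}[\norm{x_t-x^\ast}^2]-2\eta\inner{\nabla f_\delta(x_t)}{x_t-x^\ast}+\eta^2\mathbb{E}[\norm{\widetilde g_b(x_t)}^2].
\end{equation*}
Convexity gives $\inner{\nabla f_\delta(x_t)}{x_t-x^\ast}\geq f_\delta(x_t)-f_\delta(x^\ast)$, and since $f\in C^{0,0}$, Lemma~\ref{lem:GaussianApprox} lets me pass from $f_\delta$ to $f$ at the cost of an additive $4L_0\sqrt d\,\delta\eta$ per iteration, exactly as in \eqref{eqn:Pf_3.5_3}. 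Telescoping from $t=0$ to $T-1$ then yields
\begin{equation*}
\sum_{t=0}^{T-1}\mathbb{E}[f(x_t)]-Tf(x^\ast)\leq \frac{1}{2\eta}\norm{x_0-x^\ast}^2+\frac{\eta}{2}\sum_{t=0}^{T-1}\mathbb{E}[\norm{\widetilde g_b(x_t)}^2]+2L_0\sqrt d\,\delta T.
\end{equation*}

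The crucial step is the middle term. Instead of bounding each summand separately, I would use $x_{t+1}-x_t=-\eta\widetilde g_b(x_t)$ to rewrite $\tfrac{\eta}{2}\sum_t\mathbb{E}[\norm{\widetilde g_b(x_t)}^2]=\tfrac{1}{2\eta}\sum_t\mathbb{E}[\norm{x_{t+1}-x_t}^2]$ and then invoke Lemma~\ref{le:online_ind}. Summing its geometric bound gives
\begin{equation*}
\sum_{t=0}^{T-1}\mathbb{E}[\norm{x_{t+1}-x_t}^2]\leq \frac{1}{1-\beta_1}\,\eta^2\mathbb{E}[\norm{\widetilde g_b(x_0)}^2]+\frac{\beta_2 T}{1-\beta_1},
\end{equation*}
where I have used $\mathbb{E}[\norm{x_1-x_0}^2]=\eta^2\mathbb{E}[\norm{\widetilde g_b(x_0)}^2]$ together with the hypothesis $\mathbb{E}[\norm{\widetilde g_b(x_0)}^2]\leq Md^2T$.

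It remains to verify the parameter choices. With $\eta=\frac{1}{(d+2)\sqrt T L_0}$ and $\delta=\frac{(d+2)^{1/2}}{\sqrt T}$ one computes $\beta_1=\frac{4\eta^2(d+2)L_0^2}{\delta^2}=\frac{4}{(d+2)^2}\leq\frac12$, so $\frac{1}{1-\beta_1}\leq 2$; with $b=\frac{\sigma^2T}{d^2}$ both contributions to $\beta_2$ are $\mathcal O(1/T)$, hence $\beta_2 T=\mathcal O(1)$; and $\eta^2\mathbb{E}[\norm{\widetilde g_b(x_0)}^2]\leq \eta^2 Md^2T=\mathcal O(1)$. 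Therefore the displacement sum is $\mathcal O(1)$, the middle term is $\frac{1}{2\eta}\mathcal O(1)=\mathcal O(d\sqrt T)$, and the remaining two terms $\frac{1}{2\eta}\norm{x_0-x^\ast}^2$ and $2L_0\sqrt d\,\delta T$ are likewise $\mathcal O(d\sqrt T)$. Dividing by $T$ and using that $\zeta$ is uniform (so $\mathbb{E}[f(x_\zeta)]=\frac1T\sum_t\mathbb{E}[f(x_t)]$) gives $\mathbb{E}[f(x_\zeta)-\inf_x f(x)]=\mathcal O(d/\sqrt T)$. Finally, forcing $d/\sqrt T=\epsilon$ gives $T=\mathcal O(d^2/\epsilon^2)$, whence $Tb=T\cdot\frac{\sigma^2T}{d^2}=\frac{\sigma^2T^2}{d^2}=\mathcal O\!\big(\frac{\sigma^2 d^2}{\epsilon^4}\big)$.

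I expect the main obstacle to lie in the self-referential second-moment bound: unlike ordinary SGD, the residual estimator's variance at step $t$ depends on the previous step length, so a naive sum of variances does not close. The argument only works because Lemma~\ref{le:online_ind} converts this coupling into a contraction with rate $\beta_1<1$; the real effort is checking that the single triple $(\eta,\delta,b)$ simultaneously keeps $\beta_1$ bounded away from $1$, makes the perturbation $\beta_2T$ order-one, and — via the $Md^2T$ hypothesis on the initial estimate — keeps $\eta^2\mathbb{E}[\norm{\widetilde g_b(x_0)}^2]$ order-one, so that all three error contributions balance at the $\mathcal O(d\sqrt T)$ scale.
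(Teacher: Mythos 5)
Your proposal is correct and follows essentially the same route as the paper's own proof: the standard convex-SGD expansion of $\|x_t-x^\ast\|^2$ with unbiasedness of $\widetilde g_b$, the pass from $f_\delta$ to $f$ via Lemma~\ref{lem:GaussianApprox}, the displacement sum controlled by the geometric recursion of Lemma~\ref{le:online_ind}, and the same parameter checks ($\beta_1=\tfrac{4}{(d+2)^2}\le\tfrac12$, $\beta_2 T=\mathcal O(1)$, $\eta^2\mathbb{E}\|\widetilde g_b(x_0)\|^2=\mathcal O(1)$) leading to $\mathcal O(d/\sqrt T)$ and $Tb=\mathcal O(\sigma^2 d^2\epsilon^{-4})$. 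Your identification of the self-referential second-moment coupling as the crux, and of the contraction $\beta_1<1$ as what closes it, is exactly the mechanism the paper relies on.
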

\begin{pf}
	Let $x^*$ be a  minimizer of the function $f$, i.e. $x^*=\arg\min_x f(x)$. Then, we have 
\begin{align*}
& \|x_{t+1} - x^*\|^2 = \|x_t - \eta \widetilde{g}_b(x_t) -x^*\|^2 \nonumber\\
&= \|x_{t} - x^*\|^2 -2\eta \inner{\widetilde{g}_b(x_t)}{x_t - x^*} + \mathbb{E}\|x_{t+1}-x_{t}\|^2.
\end{align*}
Telescoping the above inequality over $t$ from $0$ to $T-1$ yields that
\begin{align*}
\|x_T - x^*\|^2 = & \; \|x_0 - x^*\|^2 - 2\eta \sum_{t=0}^{T-1} \inner{\widetilde{g}_b(x_t)}{x_t - x^*} \\
& +  \sum_{t=0}^{T-1} \mathbb{E}\|x_{t+1}-x_{t}\|^2. 
\end{align*}
Taking expectation in the above equality using the fact that $\mathbb{E}[\widetilde{g}_b(x_t)|x_t] = \nabla f_\delta(x_t)$, we further obtain that
\begin{align}\label{eq: wocaca}
& \mathbb{E}\|x_T - x^*\|^2 = \|x_0 - x^*\|^2 - 2\eta \sum_{t=0}^{T-1} \mathbb{E}\inner{\nabla f_\delta(x_t)}{x_t - x^*} \nonumber \\
& \quad \quad \quad \quad \quad \quad \quad \quad \quad \quad \quad \quad \quad \quad +  \sum_{t=0}^{T-1} \mathbb{E}\|x_{t+1}-x_{t}\|^2 \nonumber\\
&\overset{(i)}{\le} \|x_0 - x^*\|^2 - 2\eta \sum_{t=0}^{T-1} \mathbb{E}\big(f_\delta(x_t) - f_\delta(x^*) \big)  & \nonumber \\
& \quad \quad \quad \quad \quad \quad \quad \quad \quad \quad \quad \quad \quad \quad + \sum_{t=0}^{T-1} \mathbb{E}\|x_{t+1}-x_{t}\|^2 \nonumber\\
&\overset{(ii)}{\le} \|x_0 - x^*\|^2 - 2\eta \sum_{t=0}^{T-1} \mathbb{E}\big(f(x_t) - f(x^*) \big) +4\eta\delta L_0 \sqrt{d}T \nonumber \\
& \quad \quad \quad \quad \quad \quad \quad \quad \quad \quad \quad + \sum_{t=0}^{T-1} \mathbb{E}\|x_{t+1}-x_{t}\|^2, 
\end{align} 
where (i) follows from the convexity of $f_\delta$ and (ii) uses the fact that $|f_\delta(x) - f(x)|\le \delta L_0 \sqrt{d}$. Then,  rearranging the above inequality yields
\begin{align*}
& \frac{1}{T}\sum_{t=0}^{T-1}\mathbb{E}\big(f(x_t) - f(x^*) \big) \leq \frac{\|x_0-x^*\|^2}{\eta T}+ 4\delta L_0 \sqrt{d} \\
& \quad \quad \quad \quad \quad \quad \quad \quad \quad \quad \quad  + \frac{1}{\eta T} \sum_{t=0}^{T-1}\mathbb{E}\|x_{t+1}-x_{t}\|^2 \nonumber
\\& \overset{(i)}\leq \frac{\|x_0-x^*\|^2}{\eta T}+ 4\delta L_0 \sqrt{d} \\
& \quad \quad + \frac{1}{\eta T} \sum_{t=0}^{T-1} \Big( \beta_1^t\Big( \mathbb{E}\|x_{1}-x_{0}\|^2 -\frac{\beta_2}{1-\beta_1}\Big)+\frac{\beta_2}{1-\beta_1}\Big) \nonumber
\\ & \leq \frac{\|x_0-x^*\|^2}{\eta T}+ 4\delta L_0 \sqrt{d} \\
& \quad \quad + \frac{1}{\eta T} \frac{1-\beta_1^T}{1-\beta_1} \Big(\mathbb{E}\|x_{1}-x_{0}\|^2 -\frac{\beta_2}{1-\beta_1}\Big)+\frac{\beta_2}{\eta(1-\beta_1)},
\end{align*} 
where (i) follows from Lemma~\ref{le:online_ind} with $\beta_1 =\frac{4\eta^2(d+2)L_0^2}{\delta^2} $ and $\beta_2 = 16\eta ^2 L_0^2(d+4)^2  +\frac{8\eta^2(d+2)\sigma^2}{\delta^2 b}$. Recalling $\eta =\frac{1}{(d+2)\sqrt{T}L_0}$, $\delta = \frac{(d+2)^{1/2}}{\sqrt{T}}$, $b=\frac{\sigma^2 T}{d^2}$, and $T>d^2$, we have $\beta_1 < 1/2$, and the above inequality yields
\begin{align*}
& \frac{1}{T}\sum_{t=0}^{T-1}\mathbb{E}\big(f(x_t) - f(x^*) \big)  \leq \frac{\|x_0-x^*\|^2}{\eta T}+ 4\delta L_0 \sqrt{d} \nonumber
\\  & + \frac{2\eta}{T} \mathbb{E}\|\widetilde g_b(x_{0})\|^2+32\eta  L_0^2(d+4)^2   +\frac{16\eta(d+2)\sigma^2}{\delta^2 b} \\
& \leq\mathcal{O}\left( \frac{d}{\sqrt{T}}+ \frac{ \mathbb{E}\|\widetilde g_b(x_{0})\|^2}{dT^{3/2}}\right), 
\end{align*}
which, combined with $\mathbb{E}\|\widetilde g_b(x_{0})\|^2\leq M d^2 T$ for constant $M$ and choosing $\zeta$ from $0,..,T-1$ uniformly at random, yields 
\begin{align}
\mathbb{E}\big(f(x_\zeta) - f(x^*) \big)  \leq  \mathcal{O}\Big(\frac{d}{\sqrt{T}}\Big). \nonumber
\end{align}
To achieve an $\epsilon$-accurate solution, i.e., $\mathbb{E}\big(f(x_\zeta) - f(x^*) \big) <\epsilon$, we need $T=\mathcal{O}(d^2\epsilon^{-2})$, and hence the corresponding function query complexity is given by 
\begin{align}
Tb\leq \mathcal{O} \Big( \sigma^2 d^2\epsilon^{-4}\Big), \nonumber 
\end{align}
which finishes the proof. 
	\end{pf}

\subsection{Analysis  in Smooth Setting} 	

In this section, we  provide the convergence and complexity analysis for the proposed gradient estimator when function $F(x, \xi) \in C^{1,1}$

\subsection*{Smooth Nonconvex Geometry}
In this part, we provide the  convergence and complexity analysis for the proposed gradient estimator  for the case where $F(x;\xi)$ is nonconvex and belongs to $C^{0,0}\cap C^{1,1}$. 
\begin{thm}
Suppose Assumptions~\ref{asmp:BoundedVariance}, \ref{asmp:BoundedLipschitz} and \ref{asmp:BoundedVariance_Gradient} are satisfied and $\mathbb{E}\|\widetilde g_b(x_0)\|^2 \leq M T d^{8/3}$ for certain constant $M>0$. Choose $\eta = \frac{1}{4(d+2)^{4/3}\sqrt{T}\max(L_0,L_1)}< \frac{1}{8L_1}, \delta = \frac{1}{(d+2)^{5/6} T^{1/4}}$ and $ b=\max\big(\sigma^2,\frac{\sigma_g^2}{\sqrt{T}d^{5/3}}\big)\sqrt{T}$.  Then, we have 
$\mathbb{E}\|\nabla f_\delta(x_\zeta)\|^2
\leq \mathcal{O}\Big( \frac{d^{4/3}}{\sqrt{T}}\Big)$.
Then, to achieve an $\epsilon$-accurate stationary point of  $f$, the  total function query complexity is given by 
\begin{align}
Tb =  \mathcal{O}\big(\sigma^2d^4\epsilon^{-3}+\sigma_g^2d\epsilon^{-2}\big). \nonumber 
\end{align}
\end{thm}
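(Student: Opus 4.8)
The plan is to mirror the smooth deterministic analysis in \Cref{sec:SmoothNonconvex_Det}, but with the mini-batch second-moment recursion of \Cref{le:variance11} replacing the deterministic one. First I would use that, since $f\in C^{1,1}$ with constant $L_1$, its Gaussian smoothing $f_\delta$ also has $L_1$-Lipschitz gradient. Applying the descent lemma to $f_\delta$ along the update $x_{t+1}=x_t-\eta\widetilde g_b(x_t)$, taking the conditional expectation and using the unbiasedness $\mathbb{E}[\widetilde g_b(x_t)\mid x_t]=\nabla f_\delta(x_t)$, and telescoping from $0$ to $T-1$ gives the basic inequality
\[
\eta\sum_{t=0}^{T-1}\mathbb{E}\|\nabla f_\delta(x_t)\|^2 \le f_\delta(x_0)-f_\delta^\ast + \frac{L_1\eta^2}{2}\sum_{t=0}^{T-1}\mathbb{E}\|\widetilde g_b(x_t)\|^2,
\]
where $f_\delta^\ast$ exists because $f$ is bounded below and $|f_\delta-f|\le \delta^2 L_1 d$ by \Cref{lem:GaussianApprox}.

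The core work is to control $\sum_t \mathbb{E}\|\widetilde g_b(x_t)\|^2$ using the smooth bound in \Cref{le:variance11}, which is a perturbed contraction with rate $\alpha=6(d+2)L_0^2\eta^2/\delta^2$, a constant perturbation $C=12L_1^2\delta^2(d+6)^3+48(d+4)\sigma_g^2/b+8(d+2)\sigma^2/(\delta^2 b)$, and a \emph{state-dependent} perturbation $24(d+4)\mathbb{E}(\|\nabla f(x_t)\|^2+\|\nabla f(x_{t-1})\|^2)$. I would first check that the parameter choice forces $\alpha\le 1/2$ (indeed $\eta^2/\delta^2=1/(16(d+2)\sqrt T\max(L_0,L_1)^2)$, so $\alpha\le 3/(8\sqrt T)$). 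Unrolling the recursion and interchanging the order of summation then yields $\sum_t\mathbb{E}\|\widetilde g_b(x_t)\|^2 \le 2\mathbb{E}\|\widetilde g_b(x_0)\|^2 + 2TC + 96(d+4)\sum_t\mathbb{E}\|\nabla f(x_t)\|^2$. Finally I would convert the gradients of $f$ back to gradients of $f_\delta$ through $\|\nabla f(x_t)\|^2\le 2\|\nabla f_\delta(x_t)\|^2+2\delta^2 L_1^2(d+3)^3$, again via \Cref{lem:GaussianApprox}.

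The hard part is the resulting self-referential structure: after substituting the above into the descent inequality, a multiple of $\sum_t\mathbb{E}\|\nabla f_\delta(x_t)\|^2$, with coefficient of order $L_1\eta^2 d$, reappears on the right-hand side. The step that makes the argument go through is to check that the prescribed $\eta$ (small enough that $96L_1\eta^2(d+4)\le \eta/2$, which the factor $(d+2)^{4/3}\sqrt T$ in the denominator of $\eta$ guarantees, consistent with $\eta<1/(8L_1)$) lets me absorb this term into the left-hand side, leaving $\tfrac{\eta}{2}\sum_t\mathbb{E}\|\nabla f_\delta(x_t)\|^2$. Dividing by $\eta T/2$, selecting $x_\zeta$ uniformly from the iterates, and plugging in $\eta,\delta,b$ then reduces the claim to verifying that every residual term is $\mathcal{O}(d^{4/3}/\sqrt T)$. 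I would track each one: the optimality-gap term scales as $(d+2)^{4/3}/\sqrt T$; the initialization term uses the hypothesis $\mathbb{E}\|\widetilde g_b(x_0)\|^2\le MTd^{8/3}$ to cancel the extra $T$; the two noise terms in $C$ become $\mathcal{O}(d^{4/3}/\sqrt T)$ precisely because $b\ge\sigma^2\sqrt T$ kills the $\sigma^2/\delta^2$ blow-up and $b\ge \sigma_g^2/d^{5/3}$ handles the gradient-variance term; and the remaining $\delta^2$-bias terms are $\mathcal{O}(1/T)$ or $\mathcal{O}(d/T)$, hence lower order.

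For the complexity, $\mathbb{E}\|\nabla f_\delta(x_\zeta)\|^2\le\epsilon$ requires $T=\mathcal{O}(d^{8/3}\epsilon^{-2})$; since the chosen $\delta$ makes $\|\nabla f-\nabla f_\delta\|^2=\mathcal{O}(\delta^2 d^3)=\mathcal{O}(\epsilon)$ at this horizon, the bound transfers to an $\mathcal{O}(\epsilon)$-stationary point of $f$. Substituting $T=\mathcal{O}(d^{8/3}\epsilon^{-2})$ into $Tb=\max(\sigma^2 T^{3/2},\sigma_g^2 T/d^{5/3})$ finally gives $Tb=\mathcal{O}(\sigma^2 d^4\epsilon^{-3}+\sigma_g^2 d\epsilon^{-2})$, as claimed.
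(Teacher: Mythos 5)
Your proposal is correct and follows essentially the same route as the paper's proof: the descent lemma for $f_\delta$, the unrolled second-moment recursion from Lemma~\ref{le:variance11} with $\alpha\le 3/8$, absorption of the state-dependent gradient perturbation into the left-hand side, and the same parameter bookkeeping for the final rate and query complexity. The only (cosmetic) difference is the direction of the Gaussian-approximation conversion---you keep the analysis in terms of $\|\nabla f_\delta\|^2$ and transfer to $\|\nabla f\|^2$ at the end, while the paper converts to $\|\nabla f\|^2$ midway---and you are in fact slightly more careful than the paper at the absorption step, where the condition is really $96L_1\eta^2(d+4)\le \eta/2$ (the paper silently drops the $(d+4)$ factor there, matching the constant $192(d+2)$ it uses in the convex case).
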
 
\begin{pf}
Based on Equation (12) in~\cite{nesterov2017random}, the smoothed function $f_\delta\in C^{1,1}$ with gradient-Lipschitz constant less than $L_1$. Then, we have 
\begin{align*}
f_\delta(x_{t+1}) \leq & f_\delta(x_t) +\langle \nabla f_\delta(x_t), x_{t+1}-x_t\rangle +\frac{L_1}{2} \|x_{t+1}-x_t\|^2
\\ = & f_\delta(x_t) -\eta \langle \nabla f_\delta(x_t), \widetilde g_b(x_t)\rangle +\frac{L_1}{2}\|x_{t+1}-x_t\|^2.
\end{align*}
Let $\alpha= \frac{6 (d+2) L_0^2\eta^2}{\delta^2}$, $\beta=12L^2_1\delta^2(d+6)^3  + \frac{48(d+4)\sigma_g^2}{b}+ \frac{8(d+2)\sigma^2}{\delta^2 b}$ and $p_{t-1}=24(d+4)\mathbb{E}(\|\nabla f(x_t) \|^2+ \|\nabla f(x_{t-1})\|^2)$. Then, telescoping the bound in Lemma~\ref{le:variance11} in the smooth case yields
\begin{align}\label{ggsmoiis}
& \mathbb{E} \|\widetilde g_b(x_t)\|^2  \leq \alpha^t \mathbb{E}\|\widetilde g_b(x_0)\|^2  +   \sum_{j=0}^{t-1}\alpha^{t-1-j}p_j + \beta\sum_{j=0}^{t-1}\alpha^j & \nonumber \\
& \leq \alpha^t \mathbb{E}\|\widetilde g_b(x_0)\|^2  +   \sum_{j=0}^{t-1}\alpha^{t-1-j}p_j + \frac{\beta(1-\alpha^t)}{1-\alpha}.
\end{align}
Taking expectation over the above inequality and using $\mathbb{E}(\widetilde g_b(x_t) | x_t)=\nabla f_\delta(x_t)$, we have
\begin{align*}
\mathbb{E} f_\delta (x_{t+1}) \leq \mathbb{E} f_\delta (x_t) -\eta \mathbb{E}\|\nabla f_\delta(x_t)\|^2 + \frac{L_1\eta^2}{2} \mathbb{E}\|\widetilde g_b(x_t)\|^2.
\end{align*}
Telescoping the above inequality over $t$ from $0$ to $T-1$ yields
\begin{align}
& \mathbb{E} f_\delta (x_{k}) \leq f_\delta (x_0) -\eta \sum_{t=0}^{T-1}\mathbb{E}\|\nabla f_\delta(x_t)\|^2 \nonumber \\
& \quad \quad \quad \quad \quad \quad \quad \quad \quad \quad \quad \quad + \frac{L_1\eta^2}{2} \sum_{t=0}^{T-1} \mathbb{E}\|\widetilde g_b(x_t)\|^2\nonumber
\\ & \overset{(i)}\leq f_\delta (x_0) -\eta \sum_{t=0}^{T-1}\mathbb{E}\|\nabla f_\delta(x_t)\|^2 + \frac{L_1\eta^2}{2}\mathbb{E}\|\widetilde g_b(x_0)\|^2 & \nonumber \\
& +\frac{L_1\eta^2}{2} \sum_{t=1}^{T-1} \left( \alpha^t \mathbb{E}\|\widetilde g_b(x_0)\|^2  +   \sum_{j=0}^{t-1}\alpha^{t-1-j}p_j + \frac{\beta(1-\alpha^t)}{1-\alpha}   \right)\nonumber
\\& = f_\delta (x_0) -\eta \sum_{t=0}^{T-1}\mathbb{E}\|\nabla f_\delta(x_t)\|^2 + \frac{1-\alpha^T}{1-\alpha}\frac{L_1\eta^2}{2}\mathbb{E}\|\widetilde g_b(x_0)\|^2  & \nonumber \\
& \quad \quad \quad +\frac{L_1\eta^2}{2} \sum_{j=0}^{T-2}\sum_{t=0}^{T-2-j}\alpha^{t}p_j +\frac{L_1\eta^2}{2} \sum_{t=1}^{T-1}\frac{\beta(1-\alpha^t)}{1-\alpha}  \nonumber
\end{align}
Since $\sum_{j=0}^{T-2}\sum_{t=0}^{T-2-j}\alpha^{t}p_j \leq \sum_{j=0}^{T-2}\sum_{t=0}^{T-2}\alpha^{t}p_j $, we have that
\begin{align}
&  \mathbb{E} f_\delta (x_{k}) \leq f_\delta (x_0) -\eta \sum_{t=0}^{T-1}\mathbb{E}\|\nabla f_\delta(x_t)\|^2 \nonumber \\
&  \quad \quad \quad + \frac{1-\alpha^T}{1-\alpha}\frac{L_1\eta^2}{2}\mathbb{E}\|\widetilde g_b(x_0)\|^2 +\frac{L_1\eta^2}{2} \sum_{j=0}^{T-2}\sum_{t=0}^{T-2}\alpha^{t}p_j & \nonumber
\\ 
& \quad \quad \quad + \frac{L_1\eta^2}{2} \sum_{t=1}^{T-1}\frac{\beta(1-\alpha^t)}{1-\alpha} \nonumber \\
& \leq f_\delta (x_0) -\eta \sum_{t=0}^{T-1}\mathbb{E}\|\nabla f_\delta(x_t)\|^2 + \frac{1-\alpha^T}{1-\alpha}\frac{L_1\eta^2}{2}\mathbb{E}\|\widetilde g_b(x_0)\|^2 & \nonumber \\
& \quad \quad \quad +\frac{L_1\eta^2}{2} \frac{1-\alpha^{T-1}}{1-\alpha}\sum_{t=0}^{T-2}p_t +\frac{L_1\eta^2}{2} \sum_{t=1}^{T-1}\frac{\beta(1-\alpha^t)}{1-\alpha}, \nonumber
\end{align}
where (i) follows from~\eqref{ggsmoiis}. Choose $\eta = \big(4(d+2)^{4/3}\sqrt{T}\big)^{-1}$ $\max(L_0,L_1)^{-1}$ and $\delta = \frac{1}{(d+2)^{5/6} T^{1/4}}$. Then, we have $\alpha\leq \frac{3}{8}<\frac{1}{2}$, and the above inequality yields
\begin{align}
& \mathbb{E} f_\delta (x_{k})  \leq f_\delta (x_0) -\eta \sum_{t=0}^{k-1}\mathbb{E}\|\nabla f_\delta(x_t)\|^2 + L_1\eta^2\mathbb{E}\|\widetilde g_b(x_0)\|^2 \nonumber \\
& \quad \quad \quad \quad \quad +L_1\eta^2\sum_{t=0}^{T-2}p_t +L_1\eta^2 T\beta. \nonumber
\end{align}
Rearranging the above inequality and using $|f_\delta(x) - f|\leq \frac{\delta^2}{2}L_1d$ and $\|\nabla f_\delta(x)-\nabla f(x)\|\leq \frac{\delta}{2} L_1 (d+3)^{3/2}$ proved in~\cite{nesterov2017random}, we have 
\begin{align}
\mathbb{E} & f(x_{k})  \leq f (x_0) +\delta^2 L_1 d-\frac{\eta}{2} \sum_{t=0}^{T-1}\mathbb{E}\|\nabla f(x_t)\|^2  & \nonumber \\
& \quad \quad \quad \quad +\frac{\eta T}{4}\delta^2 L_1^2 (d+3)^3 + L_1\eta^2\mathbb{E}\|\widetilde g_b(x_0)\|^2  \nonumber \\
&  \quad \quad \quad \quad +L_1\eta^2\sum_{t=0}^{T-2}p_t +L_1\eta^2 T \beta \nonumber
\\ \leq & f(x_0) +\delta^2 L_1 d-\frac{\eta}{2} \sum_{t=0}^{T-1}\mathbb{E}\|\nabla f(x_t)\|^2 +\frac{\eta T}{4}\delta^2 L_1^2 (d+3)^3 & \nonumber \\
& + L_1\eta^2\mathbb{E}\|\widetilde g_b(x_0)\|^2 +2L_1\eta^2 \sum_{t=0}^{T-1} \|\nabla f(x_t)\|^2+L_1\eta^2 T \beta. \nonumber
\end{align}
Choosing $\zeta$ from $0,...,T-1$ uniformly at random, we obtain from the above inequality that 
\begin{align*}
& \big(  \frac{1}{2} - 2L_1\eta\big) \mathbb{E} \|\nabla f(x_\zeta)\|^2 \leq  \frac{f(x_0)-\inf_x f(x)}{\eta T} + \frac{\delta^2 L_1 d}{\eta T} \nonumber \\
& \quad \quad \quad \quad +\frac{L_1^2}{4}\delta^2 (d+3)^3 +\frac{L_1\eta\mathbb{E}\|\widetilde g_b(x_0)\|^2}{T} + L_1 \eta \beta, 
\end{align*}
which, in conjunction with $\eta = \frac{1}{4(d+2)^{4/3}\sqrt{T}\max(L_0,L_1)}< \frac{1}{8L_1}, \delta = \frac{1}{(d+2)^{5/6}T^{1/4}}, \beta=12L^2_1\delta^2(d+6)^3  + \frac{48(d+4)\sigma_g^2}{b}+ \frac{8(d+2)\sigma^2}{\delta^2 b}, b=\max\big(\sigma^2,\frac{\sigma_g^2}{\sqrt{T}d^{5/3}}\big)\sqrt{T}$ and $\mathbb{E}\|\widetilde g_b(x_0)\|^2 \leq M T d^{8/3}$, yields 
\begin{align*}
\mathbb{E} \|\nabla f(x_\zeta)\|^2 \leq & \; \mathcal{O}\Big( \frac{d^{4/3}}{\sqrt{T}}+ \frac{d^{2/3}}{T}+ \frac{d^{4/3}}{\sqrt{T}}+ \frac{d^{4/3}}{\sqrt{T}}+\frac{1}{T} \\
& +\frac{\sigma_g^2}{d^{1/3}b\sqrt{T}}+\frac{d^{4/3}\sigma^2}{b} \Big) \leq \mathcal{O}\Big( \frac{d^{4/3}}{\sqrt{T}}\Big).
\end{align*}
Then, to achieve an $\epsilon$-accurate stationary point of  function $f$, i.e., $\mathbb{E}\|\nabla f(x_\zeta)\|^2<\epsilon$, we need $T =\mathcal{O}(d^{8/3}\epsilon^{-2})$the total number of function query is given by $Tb\leq \mathcal{O}\big(\sigma^2d^4\epsilon^{-3}+\sigma_g^2d\epsilon^{-2}\big)$.
\end{pf}
	
\subsection*{Smooth Convex Geometry}
In this part, we provide the  convergence and complexity analysis for the proposed gradient estimator  for the case where $F(x;\xi)$ is convex and belongs to $C^{0,0}\cap C^{1,1}$. 
\begin{thm}
Suppose Assumptions~\ref{asmp:BoundedVariance}, \ref{asmp:BoundedLipschitz} and \ref{asmp:BoundedVariance_Gradient} are satisfied and $\mathbb{E}\|\widetilde g_b(x_0)\|^2 \leq M T d^{2}$ for certain constant $M>0$. Choose $\eta = \frac{1}{192(d+2)\sqrt{T}\max(L_0,L_1)}$ and $\delta^2 =\frac{1}{\sqrt{T}}$ and $b=\max\big(\frac{\sigma_g^2}{\sqrt{T}},\sigma^2\big)\sqrt{T}/d$.  Then, we have 
$\mathbb{E}\|\nabla f_\delta(x_\zeta)\|^2
\leq \mathcal{O} \Big( \frac{d}{\sqrt{T}}+ \frac{d^2}{T}\Big)$.
Then, to achieve an $\epsilon$-accurate stationary point of  $f$, the  total function query complexity is given by 
\begin{align}
Tb =  \mathcal{O}\big(\sigma^2d^2\epsilon^{-3}+\sigma_g^2d\epsilon^{-2}\big). \nonumber
\end{align}
\end{thm}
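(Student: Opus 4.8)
The plan is to mirror the smooth nonconvex analysis that immediately precedes this theorem, since optimality is again measured through the \emph{smoothed} gradient $\mathbb{E}\|\nabla f_\delta(x_\zeta)\|^2$ (this is important: it means the final rate does not have to absorb the comparatively large gradient-approximation error $\|\nabla f-\nabla f_\delta\|$). First I would invoke the fact, due to Nesterov, that $f_\delta\in C^{1,1}$ with gradient-Lipschitz constant at most $L_1$, apply the descent lemma along the mini-batch iterates $x_{t+1}=x_t-\eta\widetilde g_b(x_t)$, and take conditional expectation using the unbiasedness $\mathbb{E}[\widetilde g_b(x_t)\mid x_t]=\nabla f_\delta(x_t)$. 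This yields the one-step inequality
\begin{equation*}
\mathbb{E}f_\delta(x_{t+1})\le \mathbb{E}f_\delta(x_t)-\eta\,\mathbb{E}\|\nabla f_\delta(x_t)\|^2+\tfrac{L_1\eta^2}{2}\,\mathbb{E}\|\widetilde g_b(x_t)\|^2 .
\end{equation*}

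Next I would control the second-moment term through the smooth mini-batch variance bound of Lemma~\ref{le:variance11}. Writing $\alpha=\frac{6(d+2)L_0^2\eta^2}{\delta^2}$, $\beta=12L_1^2\delta^2(d+6)^3+\frac{48(d+4)\sigma_g^2}{b}+\frac{8(d+2)\sigma^2}{\delta^2 b}$, and $p_{t-1}=24(d+4)\mathbb{E}(\|\nabla f(x_t)\|^2+\|\nabla f(x_{t-1})\|^2)$, that bound is the perturbed contraction $\mathbb{E}\|\widetilde g_b(x_t)\|^2\le\alpha\,\mathbb{E}\|\widetilde g_b(x_{t-1})\|^2+p_{t-1}+\beta$. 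I would unroll it as $\mathbb{E}\|\widetilde g_b(x_t)\|^2\le\alpha^t\mathbb{E}\|\widetilde g_b(x_0)\|^2+\sum_{j=0}^{t-1}\alpha^{t-1-j}p_j+\frac{\beta(1-\alpha^t)}{1-\alpha}$, sum over $t$, and use the crude estimate $\sum_{j}\sum_{t}\alpha^{t}p_j\le\frac{1}{1-\alpha}\sum_j p_j$. Under the stated step sizes one checks $\alpha\le\frac12$, so $\frac{1}{1-\alpha}\le2$.

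Then I would telescope the descent inequality from $t=0$ to $T-1$, substitute the summed second-moment bound, and control the initial gap via the function-value approximation $|f_\delta(x)-f(x)|\le\frac{\delta^2}{2}L_1 d$ of Lemma~\ref{lem:GaussianApprox}. The delicate point is the family of $p_t$ terms, which reintroduce $\sum_t\mathbb{E}\|\nabla f(x_t)\|^2$ on the right-hand side. Here I would invoke convexity through the standard inequality $\|\nabla f(x_t)\|^2\le 2L_1\big(f(x_t)-f(x^\ast)\big)$ for $C^{1,1}$ convex functions, which keeps the dimension dependence of these reinjected terms mild, and absorb the resulting contributions back against the descent gain $-\eta\,\mathbb{E}\|\nabla f_\delta(x_t)\|^2$ after relating $f$ to $f_\delta$ by Lemma~\ref{lem:GaussianApprox}. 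The explicit constant $192$ in the step size, together with the scaling $\eta\propto\frac{1}{(d+2)\sqrt T}$, is exactly what forces the net coefficient of $\sum_t\mathbb{E}\|\nabla f_\delta(x_t)\|^2$ to stay bounded below by a positive multiple of $\eta$. Dividing by $\eta T$ and averaging over the uniformly sampled index $\zeta$, I would arrive at a bound of the form $\mathbb{E}\|\nabla f_\delta(x_\zeta)\|^2\le\mathcal{O}\big(\frac{1}{\eta T}+\frac{\delta^2 d}{\eta T}+\frac{L_1\eta\,\mathbb{E}\|\widetilde g_b(x_0)\|^2}{T}+L_1\eta\beta\big)$.

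Finally I would substitute $\eta=\frac{1}{192(d+2)\sqrt T\max(L_0,L_1)}$, $\delta^2=\frac{1}{\sqrt T}$, $b=\max(\frac{\sigma_g^2}{\sqrt T},\sigma^2)\frac{\sqrt T}{d}$, and $\mathbb{E}\|\widetilde g_b(x_0)\|^2\le MTd^2$, tracking each term: $\frac{1}{\eta T}$ and $\frac{L_1\eta\,\mathbb{E}\|\widetilde g_b(x_0)\|^2}{T}$ give $\mathcal{O}(d/\sqrt T)$, the approximation term $\frac{\delta^2 d}{\eta T}$ gives $\mathcal{O}(d^2/T)$, the $12L_1^2\delta^2(d+6)^3$ part of $L_1\eta\beta$ gives $\mathcal{O}(d^2/T)$, and the two noise parts $\frac{\sigma^2}{\delta^2 b}$ and $\frac{\sigma_g^2}{b}$ are tuned by the batch size to land at $\mathcal{O}(d/\sqrt T)$. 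This produces the claimed rate $\mathcal{O}(d/\sqrt T+d^2/T)$. For the complexity, the dominant term $d/\sqrt T=\epsilon$ forces $T=\mathcal{O}(d^2\epsilon^{-2})$; plugging this $T$ back into $b$ yields $b=\mathcal{O}\big(\max(\sigma_g^2/d,\sigma^2/\epsilon)\big)$, so $Tb=\mathcal{O}(\sigma^2 d^2\epsilon^{-3}+\sigma_g^2 d\epsilon^{-2})$. The main obstacle throughout is the bookkeeping of the two competing $\|\nabla f\|^2$-type quantities — the descent gain and the variance-induced loss carried by $p_t$ — and verifying that the prescribed constant makes the absorption strict while the convexity inequality keeps the leading dimension factors at $d^2/T$ rather than a cruder power.
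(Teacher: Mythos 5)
Your plan has a genuine structural gap at the absorption step. You propose to run the \emph{nonconvex-style} analysis (descent lemma on $f_\delta$, whose per-step gain is $-\eta\,\mathbb{E}\|\nabla f_\delta(x_t)\|^2$), then control the reinjected terms $p_t \sim (d+4)\,\mathbb{E}\|\nabla f(x_t)\|^2$ from Lemma~\ref{le:variance11} via the convexity inequality $\|\nabla f(x_t)\|^2 \le 2L_1\big(f(x_t)-f(x^\ast)\big)$, and finally ``absorb the resulting contributions back against the descent gain.'' That last step fails: after applying convexity, the right-hand side carries $\eta^2 L_1^2 d \sum_t \mathbb{E}\big(f(x_t)-f(x^\ast)\big)$, i.e.\ \emph{function-value gaps}, while your left-hand side gain consists of \emph{gradient norms}. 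Absorbing the former into the latter would require an inequality of the form $f(x)-f(x^\ast) \lesssim \|\nabla f_\delta(x)\|^2$, which is a PL/strong-convexity-type condition and is false for merely convex $C^{1,1}$ functions. The alternative of skipping convexity and writing $\|\nabla f(x_t)\|^2 \le 2\|\nabla f_\delta(x_t)\|^2 + \tfrac{\delta^2}{2}L_1^2(d+3)^3$ does permit absorption into the gradient gain, but the constant part then contributes $\mathcal{O}(L_1^3\eta\,\delta^2 d^4) = \mathcal{O}(d^3/T)$ after dividing by $\eta T$, which exceeds the claimed $\mathcal{O}(d^2/T)$ term unless $T \gtrsim d^4$; so neither variant of your plan delivers the stated bound.

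The paper avoids this entirely by running the \emph{convex-style} recursion on $\|x_t - x^\ast\|^2$: expanding $\|x_{t+1}-x^\ast\|^2$ and using unbiasedness plus convexity of $f_\delta$ gives a per-step gain of $-2\eta\,\mathbb{E}\big(f_\delta(x_t)-f_\delta(x^\ast)\big)$, converted to $f$-gaps at a cost of $2\eta\delta^2 L_1 d T$ by Lemma~\ref{lem:GaussianApprox}. Now the gain and the convexity-bounded $p_t$ terms are quantities of the \emph{same type} (function gaps), so the absorption is legitimate; the step-size constant $192$ is exactly what makes the net coefficient $2 - 192(d+4)L_1\eta \ge 1$. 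Note also that what the paper's proof actually bounds is $\mathbb{E}\big(f(x_\zeta)-f(x^\ast)\big) \le \mathcal{O}(d/\sqrt{T} + d^2/T)$ — the ``$\mathbb{E}\|\nabla f_\delta(x_\zeta)\|^2$'' in the theorem statement is an inherited typo from the nonconvex theorems, as the surrounding text (``$\epsilon$-accurate solution'') and the paper's convention for convex problems confirm. Your literal reading of the statement is what pushed you onto the descent-lemma route where the absorption breaks; had you targeted the function gap, the distance recursion is the natural (and, here, essentially the only workable) choice.
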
 
\begin{pf}
Using an approach similar to \eqref{eq: wocaca}, we have 
\begin{align*}
\mathbb{E}\|x_T - x^*\|^2
{\le} & \; \|x_0 - x^*\|^2 - 2\eta \sum_{t=0}^{T-1} \mathbb{E}\big(f(x_t) - f(x^*) \big) \\
& +2\eta\delta^2 L_1 d T+ \sum_{t=0}^{T-1}\eta^2 \mathbb{E}\|\widetilde g_b(x_{t})\|^2, 
\end{align*}
where the last inequality follows from Equation (19) in~\cite{nesterov2017random}. 
Let $\alpha= \frac{6 (d+2) L_0^2\eta^2}{\delta^2}$, $\beta=12L^2_1\delta^2(d+6)^3  + \frac{48(d+4)\sigma_g^2}{b}+ \frac{8(d+2)\sigma^2}{\delta^2 b}$ and $p_{t-1}=24(d+4)\mathbb{E}(\|\nabla f(x_t) \|^2+ \|\nabla f(x_{t-1})\|^2)$. Then, 
combining the above inequality with \eqref{ggsmoiis} yields
\begin{align}
& \mathbb{E}\|x_T - x^*\|^2  \leq \|x_0 - x^*\|^2 \nonumber \\
&  \quad -2\eta \sum_{t=0}^{T-1} \mathbb{E}\big(f(x_t) - f(x^*) \big) +2\eta\delta^2 L_1 d T + \eta^2 \mathbb{E} \|\widetilde g_b(x_0)\|^2 \nonumber
\\& \quad  + \sum_{t=1}^{T-1}\eta^2\Big(\alpha^t \mathbb{E}\|\widetilde g_b(x_0)\|^2  +   \sum_{j=0}^{t-1}\alpha^{t-1-j}p_j + \frac{\beta(1-\alpha^t)}{1-\alpha} \Big) \nonumber
\\& \leq \|x_0 - x^*\|^2 - 2\eta \sum_{t=0}^{T-1} \mathbb{E}\big(f(x_t) - f(x^*) \big) +2\eta\delta^2 L_1 d T \nonumber
\\& + \eta^2\frac{1-\alpha^T}{1-\alpha} \mathbb{E} \|\widetilde g_b(x_0)\|^2  + \eta^2 \sum_{j=0}^{T-2}\sum_{t=0}^{T-2}\alpha^{t}p_j  + \eta^2 \sum_{t=1}^{T-1}\frac{\beta(1-\alpha^t)}{1-\alpha} \nonumber
\\& \leq \|x_0 - x^*\|^2 - 2\eta \sum_{t=0}^{T-1} \mathbb{E}\big(f(x_t) - f(x^*) \big) +2\eta\delta^2 L_1 d T \nonumber
\\& + \eta^2\frac{1-\alpha^T}{1-\alpha} \mathbb{E} \|\widetilde g_b(x_0)\|^2 + \eta^2 \sum_{t=1}^{T-1}\frac{\beta(1-\alpha^t)}{1-\alpha} \nonumber \\
&  + 24(d+4)\eta^2 \frac{1-\alpha^{T-1}}{1-\alpha}\sum_{t=0}^{T-2}\mathbb{E}(\|\nabla f(x_{t+1}) \|^2+ \|\nabla f(x_{t})\|^2). \nonumber
\end{align}
Recalling $\eta = \frac{1}{192(d+2)\sqrt{T}\max(L_0,L_1)}$ and $\delta^2 =\frac{1}{\sqrt{T}}$, we have $\alpha<\frac{1}{2}$, and thus the above inequality yields
\begin{align}
& \mathbb{E}\|x_T - x^*\|^2  \leq \|x_0 - x^*\|^2 - 2\eta \sum_{t=0}^{T-1} \mathbb{E}\big(f(x_t) - f(x^*) \big) \nonumber \\
& \quad \quad +2\eta\delta^2 L_1 d T + 2\eta^2\mathbb{E} \|\widetilde g_b(x_0)\|^2 +2 T \eta^2 \beta \nonumber \\
& \quad \quad + 48(d+4)\eta^2\sum_{t=0}^{T-2}\mathbb{E}(\|\nabla f(x_{t+1}) \|^2+ \|\nabla f(x_{t})\|^2). \nonumber
\end{align}
Since the convexity implies that $\frac{1}{2L_1}\|\nabla f(x) \|^2\leq f(x)-f(x^*)$ for any $x$, rearranging the above inequality yields
\begin{align}
& (2-192(d+4)L_1\eta) \frac{1}{T}\sum_{t=0}^{T-1}\mathbb{E}(f(x_t)-f(x^*)) \nonumber \\
& \leq \frac{\|x_0-x^*\|^2}{\eta T}+ 2\delta^2 L_1 d + \frac{2\eta \mathbb{E}\|\widetilde g_b(x_0)\|^2}{T} \nonumber \\
& +24\eta L^2_1\delta^2(d+6)^3 + \frac{96\eta (d+4)\sigma_g^2}{b}+ \frac{16\eta(d+2)\sigma^2}{\delta^2 b}, \nonumber
\end{align}
which, in conjunction with $\mathbb{E}\|\widetilde g_b(x_0)\|^2\leq M d^2 T$ for certain constant $M>0$, $b=\max\big(\frac{\sigma_g^2}{\sqrt{T}},\sigma^2\big)\sqrt{T}/d$ and recalling that  $\zeta$ is chosen from $0,...,T-1$ uniformly at random, yields
\begin{align}
\mathbb{E}(f(x_\zeta)-f(x^*)) \leq \mathcal{O} \Big( \frac{d}{\sqrt{T}}+ \frac{d^2}{T}\Big). \nonumber 
\end{align}
Then, to achieve an $\epsilon$-accurate solution, i.e.,  $\mathbb{E}(f(x_\zeta)-f(x^*))\leq \epsilon$, we need $T=\mathcal{O}(d^2\epsilon^{-2})$, and thus the corresponding query complexity is given by 
\begin{align}
Tb \leq \mathcal{O}(\sigma^2d^2\epsilon^{-3}+ \sigma_g^2d\epsilon^{-2}). \nonumber
\end{align}
\end{pf}

\end{document}